\numberwithin{equation}{section}
\newtheorem{theorem}{Theorem}[section]
\newtheorem{corollary}[theorem]{Corollary}
\newtheorem{lemma}[theorem]{Lemma}
\newtheorem{proposition}[theorem]{Proposition}
\theoremstyle{definition}
\newtheorem{remark}[theorem]{Remark}
\newcommand{\R}{{\mathbb R}}
\newcommand{\Z}{\mathbb Z}
\newcommand{\B}{{\mathcal B}}
\newcommand{\E}{{\mathcal E}}
\newcommand{\A}{{\mathcal A}}
\newcommand{\V}{{\mathcal V}}
\newcommand{\Su}{\mathbb{S}}
\newcommand{\Q}{{\mathcal Q}}
\newcommand{\un}{{\mathds {1}}}
\begin{document}

\title[]{Vector valued $q$-variation for differential operators and semigroups I}

\author{Guixiang Hong$^*$}
\address{Instituto de Ciencias Matem\'aticas,
CSIC-UAM-UC3M-UCM, Consejo Superior de Investigaciones
Cient\'ificas, C/Nicol\'as Cabrera 13-15. \newline 28049, Madrid. Spain.\\
\emph{E-mail address: guixiang.hong@icmat.es}}
\thanks{*\ Corresponding author}

\author{Tao Ma}
\address{School of Mathematics and Statistics, Wuhan University, Wuhan 430072, China \\ \emph{E-mail address: tma.math@whu.edu.cn}}

\thanks{\small {{\it MR(2000) Subject Classification}.} Primary
42B25, 47B38, 47A35, 47D07; Secondary 46E40, 46B20.}
\thanks{\small {\it Keywords.}
Variational inequalities, vector-valued inequalities, martingale cotype, differential operators, ergodic averages, semigroups, pointwise convergence rate.}

\maketitle

\begin{abstract}
In this paper, we establish $\mathcal B$-valued variational inequalities for differential operators, ergodic averages and symmetric diffusion semigroups under the condition that Banach space $\mathcal B$ has martingale cotype property. These results generalize, on the one hand Pisier and Xu's result on the  variational inequalities for $\mathcal B$-valued martingales, on the other hand many classical variational inequalities in harmonic analysis and ergodic theory. Moreover, we show that Rademacher cotype $q$ is necessary for the $\mathcal B$-valued  $q$-variational inequalities. As applications of the variational inequalities, we deduce the jump estimates and  obtain quantitative information on the rate of convergence. It turns out the rate of convergence depends on the geometric property of the Banach space under consideration, which considerably improve Cowling and Leinert's result where it is shown that the convergence always holds for all Banach spaces.
\end{abstract}

\section{Introduction}
Let $(\Omega,\mu)$ be a measure space and $\B$ be a Banach space. A submarkovian $C_0$ semigroup $(T_t)_{t\geq0}$ acting on $L^p(\Omega)$ extends to a semigroup of operators on $L^p(\Omega;\B)$. Cowling and Leinert showed in \cite{CoLe11} that
\begin{align}\label{motivation1}
\|T_tf(\omega)-f(\omega)\|\rightarrow 0,\;\mathrm{a.e.}\;\omega\in\Omega,\;\mathrm{as}\;t\rightarrow0^+
\end{align}
for any Banach space $\B$ and any $f\in L^p(\Omega;\B)$ with $1<p<\infty$, where (and throughout the paper) $\|\cdot\|$ means taking $\B$-norm. The predecessor of this result is the one \cite{Tag09} by Taggart where the convergence was shown only for Banach spaces having UMD property.

In this paper, we try to understand more precise information on this convergence (\ref{motivation1}), which leads us to consider the vector-valued variational inequalities since it is well-known that variational inequalities can be used to measure the speed of convergence for the family of operators in consideration.

Actually, the scalar-valued variational inequalities have been studied a lot in probability, ergodic theory and harmonica analysis. The first variational inequality was proved by L\'epingle \cite{Lep76} for martingales which improves the classical Doob maximal inequality. Thirteen years later, Bourgain in \cite{Bou89} proved the variational inequality for the ergodic averages of a dynamic system, which has inaugurated a new research direction in ergodic theory and harmonic analysis.  Bourgain's work was considerably improved and extended to many other operators in ergodic theory. For instance, Jone et al in \cite{JKRW98} and \cite{JRW03}  established variational inequalities for differential operators and ergodic averages of measure-preserving invertible transforms; Le Merdy and Xu \cite{LeXu2} obtained very recently variational inequalities for the contractively regular semigroups under an analyticity assumption. On the other hand, almost in the same period, variational inequalities
  have been studied in harmonic analysis too. The first works on this subject are \cite{CJRW00} and \cite{CJRW03} where Campbell et al proved the variational inequalities for singular integrals. Since then variational inequalities for different kinds of operators in harmonic analysis have been built; see e.g. \cite{DMT12} for paraproducts, \cite{JSW08} for differential operator and Hilbert transform along curves,  \cite{JoWa04} for the Fej\'er and Poisson kernels, \cite{Mas},\cite{MaTo12} and \cite{MaTo} for singular integrals on Lipschitz graphs, as well as \cite{OSTTW12} for Carleson operator.

However, regarding the vector-valued variational inequalities, there is only one result as we know. That is, the $q$-variational inequality for vector-valued martingales established by Pisier and Xu in \cite{PiXu88}, where actually the authors reproved L\'epingale's scalar-valued variational inequality using another approach which can be easily adapted to the vector-valued setting. To state their inequality we need to recall the definition of the vector-valued $q$-variation. Give a sequence $(a_n)_{n\geq0}$ in Banach space $\B$ and a number $1\leq q<\infty$, the vector-valued $q$-variation norm is defined as
$$\|(a_n)_{n\geq0}\|_{v_q(\B)}=\sup\{(\|a_{n_0}\|^q+\sum_{k\geq1}\|a_{n_{k}}-a_{n_{k-1}}\|^{q})^{\frac{1}{q}}\}$$
where the supremum runs over all increasing sequences $(n_k)_{k\geq0}$ of integers. It is clear that the set of $v_q(\B)$ of all sequences with a finite vector-valued $q$-variation is a Banach space with respect to the norm $v_q(\B)$.

Let $(\mathcal{E}_n)_{n\geq0}$ is an increasing sequence of conditional expectations on a probability space $\Omega$. Pisier and Xu proved that if $\B$ is of martingale cotype $q_0$ with $2\leq q_0<\infty$,
\begin{align}\label{martingale cotype q0}
\|\mathcal{E}_0f\|+\sum_{n\geq1}\mathcal{E}(\|\mathcal{E}_nf-\mathcal{E}_{n-1}f\|^{q_0})\leq C_{q_0}\mathcal{E}(\|f\|^{q_0}),
\end{align}
then
\begin{align}\label{pisierxu}
\|(\mathcal{E}_nf)_{n\geq0}\|_{L^p(v_q(\B))}\leq C_{p,q}\|\|f\|\|_p,\;\forall f\in L^p(\Omega;\B)
\end{align}
provided $q_0<q<\infty$ and $1<p<\infty$.

The main result of this paper says that similar inequalities as (\ref{pisierxu}) remain true in harmonic analysis for differential operators and in ergodic theory for symmetric diffusion semigroups  instead of conditional expectations. To state the main results, we need to recall more notations. A symmetric diffusion semigroup  is a bounded strongly continous semigroup $(T_t)_{t\geq0}$ defined simutaneously on $L^p(\Omega,\mu)$, $1\leq p\leq\infty$ such that for all $t>0$,
\begin{enumerate}[(i)]
\item $\|T_t\|_{L^p\rightarrow L^p}\leq1$, $\forall 1\leq p\leq\infty$;
\item $\langle T_tf,g\rangle$=$\langle f,T_tg\rangle$ whenever $f,g\in L^2(\Omega)$;
\item $T_tf\geq0$ whenever $f\geq0$;
\item $T_t1=1$.
\end{enumerate}
We also need  the following continuous analog of $v_q(\B)$. Given a family $(a_t)_{t>0}$ in $\B$, define
$$\|(a_t)_{t>0}\|_{V_q(\B)}=\sup\{(\|a_{t_0}\|^q+\sum_{k\geq1}\|a_{t_{k}}-a_{t_{k-1}}\|^{q})^{\frac{1}{q}}\}$$
where the supremum runs over all increasing sequences $(t_k)_{k\geq0}$ of positive real numbers. Then we define $V_q(\B)$ to be the Banach space of all $(a_t)_{t>0}$ with $V_q(\B)$-norm finite.

The first main result of this paper is stated as follows.  Let $(T_t)_{t\geq0}$ be a symmetric diffusion semigroup. Let $2\leq q_0<\infty$ and $\B$ be a Banach space of martingale cotype $q_0$ which is an interpolation space between a Hilbert space and another Banach space $B_0$ of martingale cotype $2\leq q_1<\infty$. Then for any $q_0< q<\infty$ and any $f\in L^p(\Omega;\B)$,  the family $(T_t(f))_{t>0}$ belongs to $V_q(\B)$ for a.e. $\omega\in\Omega$ and we have an estimate
\begin{align}\label{analytic semigroup continuous0}
 \left\|\|\omega\mapsto (T_t(f)(\omega))_{t>0}\|_{V_q(\B)}\right\|_{p}\leq C_{p,q} \|\|f\|\|_p,\;\forall f\in L^p(\Omega;\B).
\end{align}

This result can be viewed as a vector-valued version of Corollary 4.5 in \cite{LeXu2}. Restricted to the symmetric diffusion semigroups and the Banach spaces satisfying the above conditions, this result provides quantitative information of the convergence.  Interestingly, it is shown in Section 6 that  the speed of the convergence depends on the quantity of martingale cotype of the Banach space under consideration.

A priori, the vector-valued inequality is difficult to deal with, as in the vector-valued harmonica analysis. However, the arguments used in the scalar valued case \cite{LeXu2} is very powerful, where pointwise estimates are used, so that the general pattern can be adapted to the vector-valued case. More precisely, as in \cite{LeXu2} we will deduce inequality (\ref{analytic semigroup continuous0}) from a similar estimate for discrete semigroups using an approxiamtion argument based on the semigroup property, which in turn relys on a similar estimate for ergodic averages $M(T_t)$'s (see Section 3 for the definition)
\begin{align}\label{ergodic average0}
\|(M_n(T_t)f)_{n\geq0}\|_{L^p(v_q(\B))}\leq C_{p,q} \|\|f\|\|_p,\;\forall f\in L^p(\Omega;\B),
\end{align}
provided $q_0<q<\infty$,
and  vector-valued Littlewood-Paley inequality
\begin{align}\label{littlewood-palye0}
\left\|\left(\sum^{\infty}_{n=0}\frac{1}{n+1}\|(n+1)T^n_t(T_t-I)f\|^{q_0}\right)^{\frac{1}{q_0}}\right\|_{p}\leq C_{p,q_0}\|\|f\|\|_p.
\end{align}

To obtain (\ref{littlewood-palye0}), we use complex interpolation and the fact that every $T_t$ admits Rota's dilation. The reason why we can not obtain inequality (\ref{analytic semigroup continuous0}) for all Banach spaces of martingale cotype $q_0$ is that we have not yet been able to prove (\ref{littlewood-palye0}) for all Banach spaces of martingale cotype $q_0$, which remains an open problem. On the other hand, inequality (\ref{littlewood-palye0}) is sharp in the sense that the inequality holds true for one $1<p<\infty$ (or equivalent all $1<p<\infty$) implies that $\B$ has to be of martingale cotype $q_0$. See for instance \cite{MTX06}.

On the estimate (\ref{ergodic average0}), we prove a little bit stronger result, a similar estimate for all Banach spaces of martingale cotype $q_0$ and all continuous contractively regular semigroups,  by a vector-valued transference technique based on Fendler's dilation and the vector-valued $q$-variational inequality for differential operators, which is another main result of this paper.

Let $f\in L^{p}(\mathbb{R}^d;\B)$, define
$$A_tf(x)=\frac{1}{|B_t|}\int_{B_t}f(x+y)dy,\;\forall t>0.$$
Then for any $1<p<\infty$ and Banach space $\B$ of martingale cotype $q_0$ with $2\leq q_0<\infty$, we have
\begin{align}\label{Z1 q variation p estimates0}
\|(A_tf)_{n\geq0}\|_{L^p(V_q(\B))}\leq C_{p,q}\|\|f\|\|_{p},\;\forall f\in L^p(\mathbb{R}^d;\B).
\end{align}
The weak type $(1,1)$ and $(L^{\infty}_c,BMO)$ estimates will also be shown in the process of obtaining (\ref{Z1 q variation p estimates0}). Here $L^{\infty}_c$ means compact supported $L^\infty$ functions. We prefer to state our results for $L^{\infty}_c$ but not $L^{\infty}$, because $q$-variation may behave badly at this end point. See \cite{Hon} for more information.
These results generalize the main results in \cite{JKRW98} \cite{JRW03} not only to the vector-valued case, but also to all $p>2$. Our strategy of the proof follows \cite{JKRW98} \cite{JRW03} but with additional analysis on geometric property of $\mathbb{R}^d$ and  more efforts on vector-valued analysis.

On the other hand, the vector-valued variational inequality is of independent interest, since the variational inequality (\ref{Z1 q variation p estimates0}) shares similar feature as Littlewood-Paley type inequality (\ref{littlewood-palye0}) in characterizing geometric property of Banach space. Precisely, we show that if inequality (\ref{Z1 q variation p estimates0}) holds for one $1<p<\infty$ (or equivalently all $1<p<\infty$), then Banach space $\B$ must be of Rademacher cotype $q$. However, this result is not sharp since in the martingale case, inequality (\ref{pisierxu}) implies obviously $\B$ must be of martingale cotype $q$ which is a stronger property than Rademacher cotype $q$. This inspire us a lot to consider vector-valued variational inequality in harmonic analysis and try to find new characterization of Banach space geometric property in subsequent works.

Our paper is organized as follows. In Section 2, we present the proof of inequality (\ref{Z1 q variation p estimates0}) and the related results. In Section 3, we prove the necessity of Rademacher cotype $q$ for inequality (\ref{Z1 q variation p estimates0}) to hold. In Section 4, by transference principle, we show variational inequality (\ref{ergodic average0}) for ergodic averages. In Section 5, the variational inequality (\ref{analytic semigroup continuous0}) for the semigroups itself will be proved, based on (\ref{ergodic average0}) and Littlewood-Paley inequality (\ref{littlewood-palye0}). In the last section, we establish individual (pointwise) ergodic theorems and the quantitative formulation of convergence.

\section{Differential operators}
Let $\B$ be a Banach space and $f:\;\R^d\rightarrow \B$. For $t>0$, let $B_t$ denote the the open ball centered at the origin $0$ with radius $r(B_t)$ equal to $t$. Then we define
  $$A_t f(x)= \frac{1}{|B_t|} \int_{B_t}f(x+y)~dy= \frac{1}{|B_t|} \int_{\R^d}f(y) \un_{B_t}(x-y)~dy, \quad x\in\R^d.$$
These are the central differential operators on $\R^d$. The $\B$-valued $q$-variation of the family of differential operators will be defined as
  $$\V_q(\A)f(x)=\|(A_tf(x))_{t\geq0}\|_{V_q(\B)}.$$

Jones, Rosenblatt and  Wierdl proved in \cite{JRW03} that the operator $\V_q(\A)$~$(q>2)$ is bounded on $L^p(\R^d)$ for $1<p\le 2$ and from $L^1(\R^d)$ into $L^{1,\infty}(\R^d)$. The following theorem extends their result not only to all $p>2$ but also to the vector valued case.

\begin{theorem}\label{Zd q variation}
Let $2\leq q_0<\infty$  and $\B$ is of martingale cotype $q_0$. Then the following statements are true for any $q_0<q\leq\infty$: ~{\rm{(i)}}. For any $1<p<\infty$, there exist a constant $C_{p,q}$ such that
   \begin{align}\label{Zd q variation p estimates}
     \|\V_q(\A)f\|_{p}\leq C_{p,q}\|\|f\|\|_{p},\;\forall f\in L^p(\R^d;\B);
   \end{align}
  ~{\rm{(ii)}}. If the estimate (\ref{Zd q variation p estimates}) is true for some $1<p_0<\infty$, then there exists a constant $C_{p_0,q}$ such that
   \begin{align}\label{Zd q variation 1 estimate}
    \big|\{\V_q(\A)f>\lambda\}\big|\leq \frac{C_{p_0,q}}{\lambda}\|\|f\|\|_{1},\;\forall f\in L^1(\R^d;\B)
   \end{align}
  for any $\lambda>0$, and
   \begin{align}\label{Zd q variation bmo estimate}
    \|\V_q(\A)f\|_{BMO_d}\leq {C_{p_0,q}}\|\|f\|\|_{\infty},\; \forall f\in L^\infty_c(\R^d;\B),
   \end{align}
  where $BMO_d$ is the dyadic BMO space.
\end{theorem}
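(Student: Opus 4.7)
The plan is to follow the dyadic long/short variation decomposition of Jones--Rosenblatt--Wierdl~\cite{JRW03}, replacing the scalar $\ell^2$-square functions used there by $\ell^{q_0}$-valued square functions tailored to the martingale cotype $q_0$ hypothesis on $\mathcal B$, and reducing the ``long'' piece to the vector-valued Pisier--Xu inequality~(\ref{pisierxu}).

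For (i), I would write
$$\V_q(\A)f(x)\le \V_q^L(\A)f(x)+\V_q^S(\A)f(x),$$
where $\V_q^L$ is the $V_q(\mathcal B)$-norm of $(A_{2^k}f(x))_{k\in\Z}$ and $\V_q^S$ is the $\ell^q_k$-sum of the $V_q(\mathcal B)$-variations of $(A_tf(x))_{t\in(2^k,2^{k+1}]}$. Let $\mathcal E_k$ denote the conditional expectation with respect to the dyadic cubes of side $2^k$. Using the triangle inequality in $V_q(\mathcal B)$ and the embedding $\ell^{q_0}\hookrightarrow V_q$, valid since $q>q_0$, one has
$$\V_q^L(\A)f\le \big\|(\mathcal E_kf)_{k}\big\|_{V_q(\mathcal B)}+\Big(\sum_{k\in\Z}\|(A_{2^k}-\mathcal E_k)f\|^{q_0}\Big)^{1/q_0}.$$
The first summand is controlled in $L^p(\mathcal B)$ by~(\ref{pisierxu}) applied to the dyadic filtration; for the second I would observe that the operators $A_{2^k}-\mathcal E_k$ act as a Littlewood--Paley decomposition at scale $2^k$ (same spatial scale and mean-zero cancellation), so their $\ell^{q_0}$-valued square function is bounded on $L^p(\mathcal B)$ by a standard Calder\'on--Zygmund argument combined with the martingale cotype $q_0$ Littlewood--Paley inequality on $\R^d$, in the spirit of~\cite{MTX06}.

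For the short variation, on each $I_k=(2^k,2^{k+1}]$ the map $t\mapsto A_tf(x)$ is $C^1$ with $\partial_tA_tf(x)$ essentially the difference between the surface average over $\partial B_t$ and the solid average $A_tf(x)$. A direct calculation combining H\"older's inequality with the inclusion $V_q\subset V_{q_0}$ (valid since $q>q_0$) and a change of variable $t=2^ku$ gives the pointwise domination
$$\V_q^S(\A)f(x)^{q_0}\lesssim \int_0^\infty\|t\partial_tA_tf(x)\|^{q_0}\,\frac{dt}{t}.$$
The right-hand side is a $\mathcal B$-valued Littlewood--Paley $g_{q_0}$-function whose $L^p(\mathcal B)$-boundedness follows from the martingale cotype $q_0$ hypothesis via a singular-integral argument, again in the spirit of~\cite{MTX06}. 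This is the step I expect to be the main obstacle: establishing the $g$-function bound for \emph{every} Banach space of martingale cotype $q_0$ is the essential new vector-valued input beyond the scalar treatment of~\cite{JRW03}, and its difficulty parallels the open problem surrounding~(\ref{littlewood-palye0}).

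For parts (ii) and (iii) I would regard $\V_q(\A)$ as a sublinear operator with $V_q(\mathcal B)$-valued convolution kernel $K_t(z)=|B_t|^{-1}\un_{B_t}(z)$ and run vector-valued Calder\'on--Zygmund theory. The key ingredient is the scalar H\"ormander-type condition
$$\int_{|x|\ge 2|h|}\big\|(K_t(x-h)-K_t(x))_{t>0}\big\|_{V_q(\R)}\,dx\lesssim 1,\qquad h\in\R^d,$$
which is a short-variation bound taken from~\cite{JRW03} and which tensorizes freely with $\mathcal B$. Given the strong bound~(\ref{Zd q variation p estimates}) at some $p_0$, the Calder\'on--Zygmund decomposition yields the weak-$(1,1)$ estimate~(\ref{Zd q variation 1 estimate}), and a standard local oscillation argument on dyadic cubes yields the $L^\infty_c\to BMO_d$ estimate~(\ref{Zd q variation bmo estimate}).
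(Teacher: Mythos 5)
Your reduction for part (i) starts the same way as the paper (long/short splitting at dyadic scales, comparison of $A_{2^k}$ with the dyadic conditional expectations $\E_k$, and Pisier--Xu (\ref{pisierxu}) for the martingale part), but the two steps you delegate to ``martingale cotype Littlewood--Paley theory in the spirit of \cite{MTX06}'' are exactly where the proof cannot be outsourced, and you say so yourself: the $L^p(\B)$-bound for the $g$-function $\big(\int_0^\infty\|t\partial_tA_tf\|^{q_0}\,dt/t\big)^{1/q_0}$ (whose kernel contains a spherical-measure component, so it is not a semigroup square function covered by \cite{MTX06}), and likewise for $\big(\sum_k\|(A_{2^k}-\E_k)f\|^{q_0}\big)^{1/q_0}$, is not known for a general Banach space of martingale cotype $q_0$, and its difficulty parallels the open problem around (\ref{littlewood-palye0}). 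The paper never needs such a continuous-parameter square-function inequality. Martingale cotype enters only once, through $\sum_n\|\,\|d_n\|\,\|_{q_0}^{q_0}\lesssim\|\,\|f\|\,\|_{q_0}^{q_0}$ for the dyadic differences $d_n$; then each block operator (long: $\|A_{2^k}\cdot-\E_k\cdot\|$; short: the $\ell^{q_0}$-sum over $\mathcal S_k$) is estimated on a \emph{single} $d_n$ by elementary geometry (constancy of $d_n$ on atoms of $\mathbb D_{n-1}$, mean zero on atoms of $\mathbb D_n$, boundary-strip measure $\sim 2^{(d-1)k}2^n$ or $2^{(d-1)n}2^k$), giving the decay $2^{-|n-k|/q_0}$, and the almost-orthogonality lemma (Young's inequality $\ell^1\ast\ell^{q_0}$) sums these up. This only yields $p=q_0$; the remaining $p$ come from interpolation with the weak $(1,1)$ and $BMO_d$ endpoints, which is the opposite logical order from your plan.

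For part (ii) your key hypothesis is false: the H\"ormander-type condition $\int_{|x|\ge2|h|}\|(K_t(x-h)-K_t(x))_{t>0}\|_{V_q}\,dx\lesssim1$ does not hold and is not in \cite{JRW03}. Already for $d=1$ and $h>0$, the function $t\mapsto K_t(x-h)-K_t(x)=\tfrac1{2t}\un_{[x-h,x)}(t)$ has $V_q$-norm comparable to $|x|^{-1}$ for every $x\ge2h$, so the integral diverges logarithmically; the same computation in $\R^d$ gives $|x|^{-d}$ and the same divergence. This is precisely why \cite{JRW03}, and the paper, do not run vector-valued Calder\'on--Zygmund theory on a $V_q$-valued kernel: after the (vector-valued) Calder\'on--Zygmund decomposition they estimate the $L^{q_0}$-norm of the variation of the bad part off the dilated cubes $\Omega^*$, using the vanishing means of the $b_i$ and the same almost-orthogonality scheme with decay $2^{n-k}$, and they observe separately that $\E_kb$ and $A_{2^k}h_n$ (for $k\le n$) vanish off $\Omega^*$. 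Similarly, the $BMO_d$ bound cannot be inherited from the pointwise domination by the long/short pieces (pointwise domination does not control BMO norms), so the paper reruns the decomposition inside the BMO argument with the reference constant $a_Q=\mathcal{LV}_{q_0}(\A)f_2(c_Q)$ (resp.\ its short-variation analogue); a ``standard local oscillation argument'' without this extra structure does not suffice.
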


\begin{remark}
 In the above theorem, the family $\{B_t\}_{t>0}$ of balls can be replaced by the family $\{Q_t\}_{t>0}$ of cubes, where $Q_t$ is the cube centered at the origin and having side length equal to $t$.
\end{remark}

The starting point of the proof is the estimate (\ref{Zd q variation p estimates}) in the case $p=q_0$. Then we prove the weak type $(1,1)$ estimate (\ref{Zd q variation 1 estimate}) and the $(L^{\infty}_c,BMO_d)$ estimate (\ref{Zd q variation bmo estimate}). Finally, the general type $(p,p)$ estimates for $1<p\neq q_0<\infty$ follow from interpolation.

As mentioned previously, our results are motivated by the similar results for vector valued martingales. Actually, the results for martingales are used in our proof. Let us recall the related martingale in this case. For $n\in\mathbb{Z}$, let $\sigma_n$ be the $n$-th dyadic $\sigma$ algebra. That is, $\sigma_n$ is generated by the dyadic cubes with side-length equal to $2^n$. Denote by $\E_n$ the conditional expectation with respect to $\sigma_n$.

Let $(f_n)_{n\in\Z}$ be a sequence of local integrable functions, measurably relative to the increasing sequence of $\sigma$-algebras $(\sigma_n)_{n\in\Z}$ and $df_n= f_{n-1}-f_n$ for $n\in \Z$.  $(f_n)_{n\in\Z}$ is said to a martingale if for each $n\in \Z$, we have $\E_{n}df_n=0$, and $df_n$ is called martingale difference.

\medskip
 Before proceeding with the proof of Theorem \ref{Zd q variation}, we need more notation. We will handle $\V_q(\A)$ by passing through long and short variations. For each interval $I_i=(t_i,t_{i+1}]$,  first consider two cases:
\begin{enumerate}[$\bullet$]
 \item Case 1: $I_i$ does not contain  any power of $2$;
 \item Case 2: $I_i$ contains powers of $2$.
\end{enumerate}
In case~1, $I_i\subset (2^k,\, 2^{k+1}]$ for some $k\in\Z$. In case~2, letting $m_i=\min\{k: 2^k\in I_i\}$ and $n_i=\max\{k: 2^k\in I_i\}$, we divide $I_i$ into three subintervals: $(t_i,\, 2^{m_i}],\, (2^{m_i},\, 2^{n_i}]$ and $(2^{n_i},\, t_{i+1}]$. Then we introduce two  collections of intervals:
\begin{enumerate}[$\bullet$]
 \item $\mathcal{S}$ consists of all intervals in case~1, and $(t_i,\, 2^{m_i}]$, $(2^{n_i},\, t_{i+1}]$ in case~2 (and $\mathcal{S}_k$ consists of all intervals in $\mathcal{S}$ and contained in $(2^k,\, 2^{k+1}]$);
 \item $ \mathcal{L}$ consists of all intervals  $(2^{m_j},\, 2^{n_j}]$ in case~2.
\end{enumerate}
Note that $\mathcal{S}\cup\mathcal{L}$ is a disjoint family of intervals. Then for any increasing sequence $(t_i)$, we have
\begin{align*}
  &\big(\sum_{i}\|A_{t_{i+1}} f(x)-A_{t_{i}} f(x)\|^{q}\big)^{\frac{1}{q}}
    \leq C_q\big(\sum_{I_i\in\mathcal{S}}\|A_{t_{i+1}} f(x)-A_{t_{i}} f(x)\|^{q} \big)^{\frac{1}{q}}\\
+ &C_q \big(\sum_{I_i\in\mathcal{L}}\|A_{t_{i+1}} f(x)-A_{t_{i}} f(x)\|^{q} \big)^{\frac{1}{q}}=C_q(I+II).
\end{align*}
The first term  on the right hand side is controlled by
  $$I \leq \sup_{(t_i)_i}\big( \sum_{k\in \Z}\sum_{I_i\in \mathcal{S}_k}
    \|A_{t_{i+1}}f(x)- A_{t_{i}}f(x)\|^{q_0}\big)^{\frac{1}{q_0}}$$
which is denoted by $\mathcal{SV}_{q_0}(\A)f$. While the second term is further estimated  in the following. Note that by the definition of $\mathcal L$, if $I_i=(t_i,t_{i+1}]\in \mathcal L$, then we can rewrite $I_i$ as $(2^{n_i},2^{n_{i+1}}]$.
\begin{align*}
II& =\big(\sum_{I_i\in\mathcal{L}}\|A_{t_{i+1}} f(x)-A_{t_{i}} f(x)\|^{q} \big)^{\frac{1}{q}} \\
  &\leq\big(\sum_{I_i\in\mathcal{L}}\| A_{t_{i+1}} f(x)-\E_{{n_{i+1}}}f(x)+\E_{{n_{i}}}f(x)
      - A_{t_{i}}f(x)\|^{q}\big)^{\frac{1}{q}}\\
  &\quad +\big(\sum_{I_i\in\mathcal{L}}\|\E_{{n_{i+1}}}f(x)-\E_{{n_{i}}}f(x)\|^{q} \big)^{\frac{1}{q}}\\
  &\leq\big(\sum_{I_i\in\mathcal{L}}\| A_{2^{n_{i+1}}}f(x)-\E_{{n_{i+1}}}f(x)+\E_{{n_{i}}}f(x)
      - A_{2^{n_{i}}}f(x)\|^{q_0} \big)^{\frac{1}{q_0}}\\
  &\quad +\sup_{(n_i)}\big(\sum_{i}\|\E_{{n_{i+1}}}f(x)-\E_{{n_{i}}}f(x)\|^{q}\big)^{\frac{1}{q}}=III+IV,
\end{align*}
Note that the term $IV$ is just the vector-valued $q$-variation for martingales in \cite{PiXu88}, denoted by $\V_{q}(\E)f$. On the other hand, by the triangle inequalities,  the term $III$ is controlled by
  $$\big(\sum_{n\in \Z} \|A_{2^n}f(x)-\E_{n}f(x)\|^{q_0}\big)^{\frac{1}{q_0}},$$
which is denoted by $\mathcal{LV}_{q_0}(\A)f$.

To conclude, we have
\begin{align}\label{vqm by svq0m lvq0m vqe}
\V_q(\A)f\leq \mathcal{SV}_{q_0}(\A)f+\mathcal{LV}_{q_0}(\A)f+\V_q(\E)f.
\end{align}
This inequality enable us to deduce (\ref{Zd q variation p estimates}) and (\ref{Zd q variation 1 estimate}) from the corresponding estimates for $\mathcal{SV}_{q_0}(\A)$ and $\mathcal{LV}_{q_0}(\A)$.
The following theorem deals with the long variation operator $\mathcal{LV}_{q_0}(\A)$.
\begin{theorem}\label{Zd long variation}
Let $2\leq q_0<\infty$ and $\B$ is of martingale cotype $q_0$. Then: {\rm (i)}. For any $1<p<\infty$, there exist a constant $C_{p,q_0}$ such that
\begin{align}\label{Zd long variation p estimates}
\|\mathcal{LV}_{q_0}(\A)f\|_{p}\leq C_{p,q_0}\|\|f\| \|_{p},\;\forall f\in L^p(\R^d;\B);
\end{align}
{\rm{(ii)}}. If the estimate (\ref{Zd long variation p estimates}) is true for some $1<p_0<\infty$, then there exists a constant $C_{p_0,q_0}$ such that
\begin{align}\label{Zd long variation 1 estimate}
\big|\{\mathcal{LV}_{q_0}(\A)f>\lambda\}\big|\leq \frac{C_{p_0,q_0}}{\lambda}\|\|f\|\|_{1},\;\forall f\in L^1(\R^d;\B).
\end{align}
for any $\lambda>0$, and
\begin{align}\label{Zd long variation bmo estimate}
\|\mathcal{LV}_{q_0}(\A)f\|_{BMO_d}\leq {C_{p_0,q_0}}\|\|f\|\|_{\infty},\;\forall f\in L^{\infty}(\Z^d;\B).
\end{align}
\end{theorem}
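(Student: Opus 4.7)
My plan is to first prove (i) in the endpoint case $p = q_0$ directly from the martingale cotype $q_0$ hypothesis on $\B$. With (i) at $p_0 = q_0$ in hand, part (ii) follows via vector-valued Calder\'on--Zygmund theory, and the full range $1 < p < \infty$ in (i) is then recovered by Marcinkiewicz interpolation between $L^1$ and $L^{q_0}$ (for $1 < p < q_0$) and by interpolation between $L^{q_0}$ and $BMO_d$ (for $q_0 < p < \infty$).

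For the $(q_0,q_0)$ bound, I would use the martingale decomposition $f = \sum_k d_k f$ with $d_k f = \E_{k-1}f - \E_k f$, together with the identities $\E_n(d_k f) = 0$ for $k \leq n$ and $\E_n(d_k f) = d_k f$ for $k > n$, to write
\[
(A_{2^n} - \E_n)f = \sum_{k \leq n} A_{2^n}(d_k f) + \sum_{k > n}(A_{2^n} - I)(d_k f).
\]
The key geometric estimate is that for $k \leq n$, the mean-zero property of $d_k f$ on dyadic cubes of scale $2^k$ localizes $\int_{B_{2^n}(x)} d_k f$ to the boundary cubes of $B_{2^n}(x)$ and yields the pointwise decay
\[
\|A_{2^n}(d_k f)(x)\| \leq C\, 2^{-(n-k)}\, M(\|d_k f\|)(x),
\]
where $M$ is the Hardy--Littlewood maximal function; and for $k > n$, $d_k f$ is piecewise constant on dyadic cubes of scale $2^{k-1}$, so $(A_{2^n} - I)d_k f(x)$ vanishes unless $B_{2^n}(x)$ crosses the boundary of such a cube, which reduces the aggregate contribution in $n$ after integration by a geometric factor. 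Inserting these bounds into the $q_0$-power, applying Young's convolution inequality in $n$ against the exponentially decaying weights in $|n - k|$, the $L^{q_0}$-boundedness of $M$, and finally the Pisier--Xu martingale cotype $q_0$ inequality $\int \sum_k \|d_k f\|^{q_0}\,dx \leq C_{q_0}\int \|f\|^{q_0}\,dx$ closes the endpoint estimate.

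For (ii), the associated vector-valued kernel
\[
K_n(x,y) = \frac{1}{|B_{2^n}|}\un_{B_{2^n}}(x-y) - \frac{1}{|Q_n(x)|}\un_{Q_n(x)}(y),
\]
where $Q_n(x)$ is the dyadic cube of scale $2^n$ containing $x$, is shown to satisfy a H\"ormander-type condition in $\ell^{q_0}(\B)$: the difference $K_n(x,y) - K_n(x,y')$ is supported in a $|y-y'|$-neighborhood of $\partial B_{2^n}(x) \cup \partial Q_n(x)$ with magnitude $\sim 2^{-nd}$, contributing $\lesssim |y-y'|/2^n$ to the $L^1(dx)$-norm on $|x-y| > 2|y-y'|$, and summing geometrically over those $n$ with $2^n > |y-y'|$ via the embedding $\ell^1 \hookrightarrow \ell^{q_0}$. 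Given the $(p_0,p_0)$ hypothesis, the standard vector-valued Calder\'on--Zygmund decomposition of the scalar function $\|f\|$ yields (\ref{Zd long variation 1 estimate}), and (\ref{Zd long variation bmo estimate}) follows from the classical splitting $f = f\un_{3Q} + f\un_{(3Q)^c}$ for a dyadic cube $Q$ (using the $L^{p_0}$ bound on the local part and the H\"ormander estimate on the tail). The main technical obstacle I anticipate is the $k > n$ sum in the endpoint bound: the pointwise size of $(A_{2^n}-I)d_k f$ itself does not decay in $|k-n|$, so the needed summability must instead be extracted in $L^{q_0}$ from the fractional volume of the boundary-neighborhood indicator, which is precisely the mechanism that ties the cotype exponent $q_0$ to the final estimate.
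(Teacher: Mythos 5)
Your overall architecture (martingale differences $d_kf=\E_{k-1}f-\E_kf$, splitting at the averaging scale, boundary-layer geometry for both halves, then Calder\'on--Zygmund plus interpolation) is the same as the paper's, and your treatment of the range $k>n$ is consistent with it. The genuine gap in part (i) is the key pointwise bound you use for $k\le n$: the inequality $\|A_{2^n}(d_kf)(x)\|\le C\,2^{-(n-k)}M(\|d_kf\|)(x)$ is false. Take $d=1$, $d_kf=a(\un_{[0,2^{k-1})}-\un_{[2^{k-1},2^k)})$ and $x=2^{k-1}-2^n$ with $n\gg k$: then $\|A_{2^n}(d_kf)(x)\|\approx |a|2^{k-n}$ while $M(\|d_kf\|)(x)\approx |a|2^{k-n}$ as well, because the maximal function already pays the factor $2^{k-n}$ for the small distant support; no additional decay $2^{-(n-k)}$ can be extracted pointwise against $M(\|d_kf\|)$. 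The cancellation only yields decay through H\"older on the boundary layer: the correct estimate is $\|A_{2^n}(d_kf)(x)\|^{q_0}\le C\,2^{-(n-k)}A_{2^n}\big(\|d_kf\|^{q_0}\big)(x)$ (this is what the paper proves), and the decay is then harvested at the level of $L^{q_0}$-norms by integrating exactly, $\int A_{2^n}g=\int g$, before Young's inequality in $n-k$ and the cotype inequality. Replacing your right-hand side by $\big(M(\|d_kf\|^{q_0})\big)^{1/q_0}$ does make the pointwise bound true, but then your integration step would need $M$ bounded on $L^1$, which fails; so as structured the endpoint estimate does not close, although it is fixable by using the scale-matched average instead of $M$.

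The second gap is in (ii): the H\"ormander condition you claim for $K_n(x,y)=\frac{1}{|B_{2^n}|}\un_{B_{2^n}}(x-y)-\frac{1}{|Q_n(x)|}\un_{Q_n(x)}(y)$ fails for the dyadic part. If $y$ and $y'$ lie on opposite sides of a dyadic hyperplane, then $\un_{Q_n(x)}(y)-\un_{Q_n(x)}(y')$ is nonzero for every $x$ in $Q_n(y)\cup Q_n(y')$, a set of measure $\sim 2^{nd}$ rather than a layer of width $|y-y'|$, and this occurs at every scale $2^n$ above the separation scale; the integral $\int_{|x-y|>2|y-y'|}\|(K_n(x,y)-K_n(x,y'))_n\|_{\ell^{q_0}}\,dx$ then diverges. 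Conditional expectations are not Calder\'on--Zygmund operators in this sense, which is precisely why the conclusion is only a dyadic $BMO_d$ bound and why the paper never estimates the $\E_n$ part by kernel smoothness. Instead it uses exact dyadic cancellation: for the weak type, the bad atoms $b_j$ are supported on dyadic cubes and have mean zero, so $\E_k b_j(x)=0$ for all $k$ whenever $x\notin Q_j$, and only the averaging part survives off $\Omega^*$, where it is controlled in $L^{q_0}$ (not $L^1$) by the same boundary-layer and almost-orthogonality mechanism; for the $BMO_d$ bound one takes $Q$ dyadic, $f_2=f\un_{\R^d\setminus Q^*}$, and uses $\E_kf_2(x)=\E_kf_2(c_Q)$ for $2^k\ge\ell(Q)$ together with the vanishing of both terms when $2^k<\ell(Q)$. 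Your splitting $f=f\un_{3Q}+f\un_{(3Q)^c}$ with a tail estimate by kernel regularity would handle the ball averages alone, but not the $\E_n$ part, so (ii) as proposed does not go through.
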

The same conclusions hold true for the short variation operator $\mathcal{SV}_{q_0}(\A)$.
\begin{theorem}\label{Zd short variation}
Let $2\leq q_0<\infty$ and $\B$ is of martingale cotype $q_0$. Then: {\rm (i)}. For any $1<p<\infty$, there exist a constant $C_{p,q_0}$ such that
\begin{align}\label{Zd short variation p estimates}
\|\mathcal{SV}_{q_0}(\A)f\|_{p}\leq C_{p,q_0}\|\|f\|\|_{p},\;\forall f\in L^p(\R^d;\B);
\end{align}
{\rm{(ii)}}. If the estimate (\ref{Zd short variation p estimates}) is true for some $1<p_0<\infty$, then there exists a constant $C_{p_0,q_0}$ such that
\begin{align}\label{Zd short variation 1 estimate}
\big|\{\mathcal{SV}_{q_0}(\A)f>\lambda\}\big|\leq \frac{C_{p_0,q_0}}{\lambda}\|\|f\|\|_{1},\;\forall f\in L^1(\R^d;\B).
\end{align}
for any $\lambda>0$, and
\begin{align}\label{Zd short variation bmo estimate}
\|\mathcal{SV}_{q_0}(\A)f\|_{BMO}\leq {C_{p_0,q_0}}\|\|f\|\|_{\infty},\;\forall f\in L^{\infty}(\R^d;\B).
\end{align}
\end{theorem}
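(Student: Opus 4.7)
The plan is to dominate the short variation pointwise by a vector-valued Littlewood--Paley $g$-function and then call on vector-valued Littlewood--Paley theory. For each fixed $x$, on a dyadic block $(2^k,2^{k+1}]$, the map $t\mapsto A_tf(x)$ is smooth, so its $V_{q_0}$-variation is controlled by its $V_1$-variation (which equals $\int_{2^k}^{2^{k+1}}\|\partial_tA_tf(x)\|\,dt$). H\"older's inequality with exponents $q_0,q_0'$ applied on each block, followed by an $\ell^{q_0}$-sum over $k\in\Z$, yields the majorization
\begin{align*}
\SV_{q_0}(\A)f(x)\leq C_{q_0}\left(\int_0^\infty\|t\,\partial_tA_tf(x)\|^{q_0}\,\frac{dt}{t}\right)^{1/q_0}=:G(f)(x).
\end{align*}
A polar-coordinate computation identifies $t\,\partial_tA_tf=d(\mathcal{M}_tf-A_tf)$, where $\mathcal{M}_tf$ is the spherical mean of $f$ on the sphere of radius $t$ centered at $x$.

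Next I would establish $\|G(f)\|_{q_0}\leq C\|\|f\|\|_{q_0}$ under the martingale cotype $q_0$ assumption on $\B$. In the Hilbert case this is immediate from Plancherel together with the decay of the Fourier multiplier associated with $\mathcal{M}_t-A_t$. For general $\B$, I would compare $G(f)$ with the $g$-function of a symmetric diffusion semigroup (such as the Poisson or heat semigroup on $\R^d$) via a subordination-type integral representation of $\mathcal{M}_t-A_t$, and then invoke the Mart\'inez--Torrea--Xu characterization \cite{MTX06} of martingale cotype $q_0$ by the $L^p$-boundedness of semigroup $g$-functions. This settles part~(i) for $p=q_0$.

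For part~(ii), with the $L^{p_0}$ bound available, a vector-valued Calder\'on--Zygmund argument yields the endpoint estimates. The weak-$(1,1)$ inequality~(\ref{Zd short variation 1 estimate}) follows from a Calder\'on--Zygmund decomposition of $f$ at height $\lambda$: the good part is handled by Chebyshev's inequality and the hypothesis, while for each bad atom $b_Q$ the smoothness in $y$ of the distributional kernels of $\partial_tA_t$ (associated with $\phi_t=|B_t|^{-1}\un_{B_t}$) shows that $\SV_{q_0}(\A)b_Q$ has an integrable tail outside $2Q$. The BMO estimate~(\ref{Zd short variation bmo estimate}) is obtained by the usual local/nonlocal split on each dyadic cube $Q$: the local part is controlled by the $L^{p_0}$ hypothesis, and kernel regularity forces the nonlocal contribution to be essentially constant on $Q$. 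Combining the $L^{q_0}$ bound from the previous paragraph with part~(ii), Marcinkiewicz interpolation (between weak $(1,1)$ and $L^{q_0}$, and between $L^{q_0}$ and $(L^\infty_c,BMO_d)$) yields the full range $1<p<\infty$ of part~(i).

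The principal obstacle is the $L^{q_0}$ bound on the vector-valued $g$-function $G$: since $(A_t)_{t>0}$ is not a semigroup, vector-valued Littlewood--Paley theory for symmetric diffusion semigroups does not apply directly, and the comparison between $G(f)$ and a semigroup $g$-function through the identity $t\,\partial_tA_t=d(\mathcal{M}_t-A_t)$ must be made quantitative, with constants depending only on the martingale cotype of $\B$. The remaining steps are vector-valued adaptations of the scalar Calder\'on--Zygmund arguments from \cite{JRW03}.
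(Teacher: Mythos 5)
Your reduction of $\SV_{q_0}(\A)f$ to the $g$-function $G(f)=\bigl(\int_0^\infty\|t\,\partial_tA_tf\|^{q_0}\,dt/t\bigr)^{1/q_0}$ is legitimate, but the step you yourself call the principal obstacle is a genuine gap, not a deferred technicality: the bound $\|G(f)\|_{q_0}\lesssim\|\,\|f\|\,\|_{q_0}$ for a general Banach space of martingale cotype $q_0$ is never proved and does not follow from \cite{MTX06}. The Mart\'inez--Torrea--Xu theorem gives the $g$-function estimate for (subordinated) symmetric diffusion semigroups such as the Poisson semigroup, whereas $(A_t)_{t>0}$ is not a semigroup; to transfer it you would need to realize $t\,\partial_tA_t=d(\mathcal{M}_t-A_t)$ as a superposition of $s\,\partial_sP_s$ with a Schur-integrable kernel in $(t,s)$, and no such subordination formula is exhibited (the multiplier of $A_t$ is a Bessel-type function with oscillation and only polynomial decay; the scalar $L^2$ bound goes through Plancherel, which is unavailable under cotype alone). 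That this transfer is not routine is confirmed by the paper itself: the authors state they cannot prove the analogous vector-valued Littlewood--Paley inequality (\ref{littlewood-palye0}) for all martingale cotype $q_0$ spaces and must add an interpolation hypothesis in Theorem \ref{thm: analytic semigroup}. So your plan outsources its core to a claim that is at least as delicate as the theorem being proved. The paper's own proof of the $(q_0,q_0)$ bound avoids $g$-functions entirely: it writes $f=\sum_n d_n$ in dyadic martingale differences, uses cotype only through $\sum_n\|\,\|d_n\|\,\|_{q_0}^{q_0}\lesssim\|\,\|f\|\,\|_{q_0}^{q_0}$, and applies the almost orthogonality principle (Lemma \ref{almost orthogonality}) to $\Su_k=\bigl(\sum_{I_i\in\mathcal{S}_k}\|(A_{t_{i+1}}-A_{t_i})\,\cdot\,\|^{q_0}\bigr)^{1/q_0}$, the decay $\|\Su_kd_n\|_{q_0}\lesssim 2^{-|n-k|/q_0}\|\,\|d_n\|\,\|_{q_0}$ coming from the constancy of $d_n$ on dyadic atoms when $k<n$ and from the cancellation $\int_Hd_n=0$ plus boundary-strip measure estimates when $n\le k$.

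A secondary but real problem is part (ii): you invoke ``smoothness in $y$ of the distributional kernels of $\partial_tA_t$,'' but that kernel is $\tfrac{d}{t}\bigl(\sigma_t-|B_t|^{-1}\un_{B_t}\bigr)$, a difference of singular measures, so no H\"ormander-type regularity is available off the shelf, and the tail estimate for $\SV_{q_0}(\A)b_Q$ outside $2Q$ cannot be quoted from standard Calder\'on--Zygmund theory. The paper instead exploits the mean-zero property of the bad atoms together with the geometry (only boundary strips of measure about $2^n2^{(d-1)k}$ contribute), again packaged through Lemma \ref{almost orthogonality}, and a similar explicit computation underlies the $(L^\infty_c,BMO)$ bound; this part of your outline is repairable along those lines, but as written it does not stand. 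Your final interpolation step agrees with the paper's.
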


As the comments after Theorem \ref{Zd q variation}, we shall prove firstly in Theorem \ref{Zd long variation} and Theorem \ref{Zd short variation} the $(q_0,q_0)$ estimate and weak type $(1,1,)$ estimate, which rely on the following almost orthogonality principle. 
\begin{lemma}\label{almost orthogonality}
Suppose $(\Su_n)_{n\in\Z}$ is a sequence of subadditive operators from $L^{q_0}(\B)$ to $L^{q_0}$ in some $\sigma$-finite measure space, i.e. $\Su(f+g)\leq\Su(f)+\Su(g)$. Let $(u_n)_{n\in\mathbb{Z}}$ and $(v_n)_{n\in\Z}$ be two sequences of $L^{q_0}(\B)$ functions. Then we have
\begin{align*}
\sum_k\|\sup_{j,m}\Su_k(\sum_{j\leq n\leq m}u_n)\|^{q_0}_{q_0}\leq w^{q_0}\cdot\sum_{n}\|\|v_n\|\|^{q_0}_{q_0}
\end{align*}
provided that there is a sequence $(\sigma(j))_{j\in\Z}$ of positive numbers with $w=\sum_{j}\sigma(j)<\infty$ such that
$$\|\Su_ku_n\|_{q_0}\leq\sigma(n-k)\|\|v_n\|\|_{q_0}$$
for every $n,k$.

Furthermore, if~$\Su$ are continuous, $f=\sum_nu_n$, $\sum_n\|\|v_n\| \|^{q_0}_{q_0}\leq C\|\|f\|\|^{q_0}_{q_0}$, then
  $$\sum_k\|\Su_kf\|^{q_0}_{q_0}\leq Cw^{q_0}\cdot\|\|f\|\|^{q_0}_{q_0}.$$
\end{lemma}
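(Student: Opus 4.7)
The plan is to treat this as an $L^{q_0}$ version of the classical Cotlar--Stein almost orthogonality lemma, where the only inputs are subadditivity of the operators $\Su_k$, the diagonal decay $\|\Su_k u_n\|_{q_0}\leq \sigma(n-k)\|\|v_n\|\|_{q_0}$, and Jensen's inequality (which replaces the role of orthogonality when $q_0\neq 2$). The subadditivity assumption will be used in two ways: to peel off partial sums, and to commute the supremum in $(j,m)$ with the operators.

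First I would handle the supremum. By subadditivity, for every pair of indices $j\leq m$ one has pointwise
\[
\Su_k\Bigl(\sum_{j\leq n\leq m} u_n\Bigr)\leq \sum_{j\leq n\leq m}\Su_k u_n \leq \sum_n \Su_k u_n,
\]
so that
\[
\Bigl\|\sup_{j,m}\Su_k\Bigl(\sum_{j\leq n\leq m} u_n\Bigr)\Bigr\|_{q_0}\leq \sum_n \|\Su_k u_n\|_{q_0}\leq \sum_n \sigma(n-k)\|\|v_n\|\|_{q_0}.
\]
Then I would raise both sides to the $q_0$-th power and apply Jensen's inequality with respect to the probability measure $\sigma(n-k)/w$ on $n\in\Z$:
\[
\Bigl(\sum_n \sigma(n-k)\|\|v_n\|\|_{q_0}\Bigr)^{q_0} = w^{q_0}\Bigl(\sum_n \tfrac{\sigma(n-k)}{w}\|\|v_n\|\|_{q_0}\Bigr)^{q_0}\leq w^{q_0-1}\sum_n \sigma(n-k)\|\|v_n\|\|_{q_0}^{q_0}.
\]
Summing in $k$ and applying Fubini together with $\sum_k \sigma(n-k)=w$ gives the first claimed inequality
\[
\sum_k \Bigl\|\sup_{j,m}\Su_k\Bigl(\sum_{j\leq n\leq m} u_n\Bigr)\Bigr\|_{q_0}^{q_0}\leq w^{q_0}\sum_n \|\|v_n\|\|_{q_0}^{q_0}.
\]

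For the second assertion, assuming $f=\sum_n u_n$ converges in $L^{q_0}(\B)$ and each $\Su_k$ is continuous $L^{q_0}(\B)\to L^{q_0}$, the partial sums $f_N=\sum_{|n|\leq N}u_n$ satisfy $\Su_k f_N\to \Su_k f$ in $L^{q_0}$, so along a subsequence also a.e. Since $\Su_k f_N\leq \sup_{j,m}\Su_k(\sum_{j\leq n\leq m}u_n)$ pointwise, passing to the limit yields
\[
\Su_k f\leq \sup_{j,m}\Su_k\Bigl(\sum_{j\leq n\leq m}u_n\Bigr)\quad \text{a.e.}
\]
Combining this with the first part and the hypothesis $\sum_n\|\|v_n\|\|_{q_0}^{q_0}\leq C\|\|f\|\|_{q_0}^{q_0}$ produces the second stated inequality.

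The only non-routine step is the Jensen move that substitutes for orthogonality when $q_0>2$; everything else is subadditivity, Fubini, and an approximation argument. The mild obstacle will be making sure the continuity hypothesis is strong enough to pass from partial sums to $f$, which is why I would reduce to almost-everywhere convergence along a subsequence before invoking subadditivity, rather than trying to work with the sup directly at the norm level.
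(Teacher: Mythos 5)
Your proposal is correct and follows essentially the same route as the paper: both reduce, via subadditivity and the triangle inequality, to the convolution bound $\sum_k\bigl(\sum_n\sigma(n-k)b_n\bigr)^{q_0}\leq w^{q_0}\sum_n b_n^{q_0}$ with $b_n=\|\|v_n\|\|_{q_0}$, which the paper quotes as Young's inequality $\|\sigma\ast b\|_{\ell^{q_0}}\leq\|\sigma\|_{\ell^1}\|b\|_{\ell^{q_0}}$ and you reprove directly by Jensen plus Fubini. Your limiting argument for the ``furthermore'' part (partial sums, $L^{q_0}$-continuity, a.e.\ convergence along a subsequence) is a sound way to fill in what the paper leaves implicit.
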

\begin{proof}
By the subadditivity of the $\Su$,
\begin{align*}
\sup_{j,m}\Su_k(\sum_{j\leq n\leq m}u_n)&\leq\sup_{j,m}\sum_{j\leq n\leq m}\Su_k(u_n)=\sum_n\Su_ku_n.
\end{align*}
Hence by the triangle inequality for the $L^{q_0}$ norm,
  $$\|\sup_{j,m}\Su_k(\sum_{j\leq n\leq m}u_n)\|_{q_0}\leq\sum_n\|\Su_ku_n\|_{q_0}.$$
Setting $b_n=\|\|v_n\| \|_{q_0}$, by the assumption $\|\Su_ku_n\|_{q_0}\leq\sigma(n-k)\|\|v_n\|\|_{q_0}$,
it suffices to prove
$$\sum_k\big(\sum_n\sigma(n-k)b_n\big)^{q_0}\leq\big(\sum_n\sigma(n)\big)^{q_0}\cdot\sum_{n}b^{q_0}_n,$$
which follows from the well known inequality for the $\ell^{q_0}$ norm of the convolution of two sequences
$$\|\sigma\ast b\|_{\ell^{q_0}}\leq\|\sigma\|_{\ell^1}\|b\|_{\ell^{q_0}}.$$
\end{proof}

\subsection{Strong type $(q_0,q_0)$ estimates}
By (\ref{vqm by svq0m lvq0m vqe}), the assertion that $\V_{q}(\A)$ is bounded from $L^{q_0}(\B)$ to $L^{q_0}$ is  an immediate consequence of the strong type $(q_0,q_0)$ estimates of $\mathcal{LV}_{q_0}(\A)$,  $\mathcal{SV}_{q_0}(\A)$ and $\V_{q}(\E)$. Now let us first prove strong $(q_0,q_0)$ estimate of $\mathcal{LV}_{q_0}(\A)$.
\begin{proof}
Let us write $f=\sum_n d_n$, where $d_n=\E_{n-1}f-\E_nf$ for $n\in\Z$. Then $\B$ being of martingale cotype $q_0$ means
  $$\sum_n\|\|d_n\|\|^{q_0}_{q_0}\leq C_{q_0}\|\|f\|\|^{q_0}_{q_0}.$$

Taking $\Su_kg(x)=\|A_{2^k}g(x)-\E_k g(x)\|$, $u_n=v_n=d_n$ and $\sigma(j)=C2^{-|j|/{q_0}}$ in Lemma \ref{almost orthogonality},  it suffices to prove
\begin{align}\label{1 esti in proof of LV}
\|\|(A_{2^k}-\E_k)d_n\| \|^{q_0}_{q_0}\leq C\cdot 2^{-|n-k|}\|\|d_n\| \|^{q_0}_{q_0}.
\end{align}
We first prove (\ref{1 esti in proof of LV}) in the case $k<n$. Note that $\E_kd_n=d_n$ when $k<n$. It suffices to show
\begin{align}\label{2 esti in proof of LV}
\|\|A_{2^k}d_n-d_n\|\|^{q_0}_{q_0}\leq C\cdot2^{(k-n)}\|\|d_n\|\|^{q_0}_{q_0}.
\end{align}
Let us denote by $\mathbb{D}_n$ the set of all atoms of the $n$-th dyadic $\sigma$ algebra $\sigma_n$, and write
  $$\|\| A_{2^k}d_n-d_n\|\|^{q_0}_{q_0}=\int_{\R^d}\|A_{2^k}d_n-d_n\|^{q_0}
    =\sum_{H\in\mathbb{D}_{n-1}}\int_H\|A_{2^k}d_n-d_n\|^{q_0}.$$
Since $d_n$ is constant on the atom $H\in\mathbb{D}_{n-1}$, we have $A_{2^k}d_n-d_n=0$ if $x+ B_{2^k}\in H$. Hence for $x\in H$, $(A_{2^k}d_n-d_n)(x)$ may be nonzero only if $x+B_{2^k}$ intersects with the complement of $H$. Hence for a given set $B\subset \R^d$, we consider the set
  $$\mathcal{H}(B,H)= \{x\in H|~x+ B\cap H^C\neq \emptyset\}.$$
Since $H\in\mathbb{D}_{n-1}$, we have
  $$|\mathcal{H} (B_{2^k},H)|\leq C2^{(d-1)n}\cdot 2^k.$$
Denoting by $m_H$ the maximum of $\|d_n\|$ on the cubes neighboring $H$ and $H$, then we have the estimate $\|(A_{2^k}d_n(x)-d_n(x)\| \leq 2m_H$ for every $x\in H$. Hence
\begin{align*}
 \int_H \|A_{2^k}d_n-d_n\|^{q_0}&\leq C2^{(d-1)n}\cdot2^k\cdot m_H^{q_0}\leq C2^{k-n} \int_Hm_H^{q_0}.
\end{align*}
Summing over all $H\in\mathbb{D}_{n-1}$ and noting that $\int_{\R^d}m^{q_0}_H \leq  C_d\int_{\R^d}\|d_n\|^{q_0}$ since $m_H$ is a constant on $H$. We finish the proof of (\ref{2 esti in proof of LV}).

Now let us prove (\ref{1 esti in proof of LV}) in the case $n\leq k$. Note that $\E_kd_n=0$ in this case, hence it suffices to prove
\begin{align}\label{3 esti in proof of LV}
\|\|A_{2^k}d_n\|\|^{q_0}_{q_0}\leq C2^{n-k}\|\|d_n\|\|^{q_0}_{q_0},
\end{align}
which is deduced  from the following pointwise estimate by integrating both sides:
\begin{align}\label{4 esti in proof of LV}
 \|A_{2^k} d_n \|^{q_0}\leq C 2^{n-k}\cdot A_{2^k}\|d_n\|^{q_0}.
\end{align}
We finish the proof once we prove the inequality (\ref{4 esti in proof of LV}). Indeed, integrating over $\R^d$, we get
\begin{align*}
 \int_{\R^d} \|A_{2^k}d_n \|^{q_0} &\leq C2^{n-k} \int_{\R^d} \frac1{|B_{2^k}|}\int_{x+B_{2^k}}\|d_n(y)\|^{q_0}dydx\\
     &=C2^{n-k} \frac1{|B_k|}\int_{B_{k}}dy\int_{\R^d}\|d_n(x+y)\|^{q_0}dx\\
     &=C2^{n-k}\|\|d_n\|\|^{q_0}_{q_0}.
\end{align*}

We divide $\R^d$ into all atoms in $\mathbb{D}_n$. Taking any $H\in\mathbb{D}_n$, we have the fact $\int_Hd_n=0$. Then for a given set $B\subset \R^d$, we consider
  $$ \mathcal{I}(x+B,n)=\cup\{(x+B)\cap H| H\in \mathbb{D}_n,~x+\partial B\cap H\neq \emptyset\}.$$
Since the $\{H|H\in \mathbb{D}_n\}$ are pairwise disjoint, we have
\begin{align*}
  \int_{x+B_{2^k}} d_n =\sum_{H\in\mathbb{D}_n} \int_{B_{2^k}+x\cap H}d_n\leq \int_{\mathcal{I}(x+B_{2^k},n)} d_n.
\end{align*}
Now since the measure of $\mathcal{I}(x+B_{2^k},n)$ is not more than a constant multiple of $2^n 2^{(d-1)k}$ and $|B_{2^k}|\approx 2^{kd}$,  using H\"older inequality, we get
\begin{align*}
  \|A_{2^k}d_n \|^{q_0} &=\frac1{|B_{2^k}|^{q_0}} \big\|\int_{\mathcal{I}(x+B_{2^k},n)}d_n\big\|^{q_0}\\
     &\leq C\frac{1}{(2^{dk})^{q_0}} (2^{n+(d-1)k})^{q_0-1} \int_{x+B_{2^k}}\|d_n\|^{q_0}\\
    &\leq C( 2^{n-k})^{q_0-1} \big(\frac1{2^{dk}}  \int_{x+B_{2^k}}\|d_n\|^{q_0}\big)\\
     &\leq C 2^{n-k}  \big(\frac1{|B_{2^k}|} \int_{x+B_{2^k}}\|d_n\|^{q_0}\big).
\end{align*}
Then we prove inequality (\ref{4 esti in proof of LV}).
\end{proof}

The structure of the proof of strong $(q_0,q_0)$ estimate for  $\mathcal{SV}_{q_0}(\A)$  in Theorem \ref{Zd short variation} is similar to $\mathcal{LV}_{q_0}(\A)$.
\begin{proof}
Let $\Su_k$ be such as
$$\Su_kg(x)=\big(\sum_{I_i\in\mathcal{S}_k}\|(A_{t_{i+1}}-A_{t_{i}})g\|^{q_0}\big)^{\frac{1}{q_0}}(x),$$
and $u_n=v_n=d_n$ and $\sigma(j)=C2^{-|j|/{q_0}}$ in Lemma \ref{almost orthogonality}. Note that $I_i \in\mathcal{S}_k$ means $I_i= (t_i,t_{i+1}]\subset (2^{k},2^{k+1}]$.
Then it suffices to prove
\begin{align}\label{1 esti in proof of SV}
\int_{\R^d}\sum_{I_i\in\mathcal{S}_k}\|(A_{t_{i+1}}-A_{t_i})d_n\|^{q_0}\leq C2^{-|n-k|}\|\|d_n\|\|^{q_0}_{q_0}.
\end{align}
We first prove (\ref{1 esti in proof of SV}) in the case $k<n$. We then have
$$\int_{\R^d}\sum_{I_i\in\mathcal{S}_k}\|(A_{i+1}-A_{t_i})d_n\|^{q_0} =\sum_{H\in\mathbb{D}_{n-1}}
\int_{H}\sum_{I_i\in\mathcal{S}_k}\|(A_{B_{t_{i+1}}}-A_{B_{t_{i}}})d_n\|^{q_0}.$$
Since $d_n$ is constant on $H\in \mathbb{D}_{n-1}$,
$\sum_{I_i\in\mathcal{S}_k}\|(A_{t_{i+1}}-A_{t_{i}})d_n\|^{q_0}$
can be nonzero only if for some $I_i\in\mathcal{S}_k$ at least the ball $x+B_{t_{i+1}}$ intersects the complement of $H$. Noticing that $B_{t_{i+1}}\subset B_{2^{k+1}}$, we consider the set
  $$\mathcal{H}(B_{2^{k+1}},H)= \{x\in H|~x+B_{2^{k+1}}\cap H^C\neq \emptyset\}.$$
And the measure of $\mathcal{H}(B_{2^{k+1}},H)$ is not more than a constant multiple of $2^{(d-1)n}2^{k}$.
Recall that the maximum of $\|d_n\|$ on the cubes neighboring $H$ and $H$ is denoted by $m_H$.
Hence for every $x\in H$, we have the estimate as follows:
\begin{align*}
    &\big(\sum_{I_i\in\mathcal{S}_k}\|(A_{t_{i+1}}-A_{t_i})d_n\|^{q_0}\big)^{\frac{1}{q_0}}(x)
        \leq \sum_{I_i\in\mathcal{S}_k}\|(A_{t_{i+1}}-A_{t_i})d_n\|(x)\\
\leq& C\frac{1}{|B_{2^{k}}|} \sum_{I_i\in\mathcal{S}_k} \int_{x+(B_{2^{k+1}}\setminus B_{2^{k}}) }\|d_n\|
        +\sum_{I_i\in\mathcal{S}_k}\big(\frac{1}{|B_{t_{i}}|}-\frac{1}{|B_{t_{i+1}}|}\big)\int_{x+B_{2^{k+1}}}\|d_n\|\\
\leq& C\frac{1}{|B_{2^{k}}|}\int_{x+B_{2^{k+1}}}\|d_n\| +\frac{1}{|B_{2^{k}}|}\int_{x+B_{2^{k+1}}}\|d_n\|\\
\leq& C A_{2^{k+1}} \|d_n\|\leq Cm_H.
\end{align*}
Then we have
\begin{align*}
\int_{H}\sum_{I_i\in\mathcal{S}_k}\|(A_{t_{i+1}}-A_{t_{i}})d_n\|^{q_0}
   &\leq C \int_{\mathcal{H}(B_{2^{k+1}},H)}  m_H^{q_0} \leq C2^{(d-1)n}2^{k}  m_H^{q_0}\\
   &=C2^{k-n}( 2^{dn} m_H^{q_0}) \leq C2^{k-n}\int_{H}m_H^{q_0}.
\end{align*}
Summing over $H\in\mathbb{D}_{n-1}$ and noting that $\int_{\R^d}m^{q_0}_n\leq C\int_{\R^d}\|d_n\|^{q_0},$ we finish the prove of (\ref{1 esti in proof of SV}) in the case $n>k$.

Now we turn to the proof of (\ref{1 esti in proof of SV}) in the case $n\leq k$. It suffices to prove the pointwise estimate
\begin{align}\label{2 esti in proof of SV}
 \sum_{I_i\in\mathcal{S}_k}\|(A_{t_{i+1}}-A_{t_{i}})d_n(x)\|^{q_0}\leq C2^{n-k}A_{2^{k+1}}\|d_n(x)\|^{q_0},~x\in \R^d,
\end{align}
since then we can immediately get (\ref{1 esti in proof of SV}) by integrating both sides on $\R^d$ as we have done for the long $q_0$-variation. We first deduce
\begin{align*}
 \sum_{I_i\in\mathcal{S}_k}&\|(A_{t_{i+1}}-A_{t_{i}})d_n\|^{q_0} \leq C_{q_0}\sum_{I_i\in\mathcal{S}_k}
       \Big\|\frac{1}{|B_{t_{i+1}}|}\int_{x+B_{t_{i+1}}\setminus B_{t_{i}}}d_n\Big\|^{q_0}\\
     &+C_{q_0}\sum_{I_i\in\mathcal{S}_k}\Big(\frac{1}{|B_{t_{i}}|}-\frac{1}{|B_{t_{i+1}}|}\Big)^{q_0}
       \Big\|\int_{x+B_{t_{i}}}d_n\Big\|^{q_0}\\
\leq & C_{q_0}\frac{1}{|B_{2^k}|^{q_0}}\Big\|\sum_{I_i\in\mathcal{S}_k}\int_{x+B_{t_{i+1}}\setminus B_{t_{i}}}d_n\Big\|^{q_0}\\
     & +C_{q_0}\sum_{I_i\in\mathcal{S}_k}\Big(\frac{1}{|B_{t_{i}}|}-\frac{1}{|B_{t_{i+1}}|}\Big)^{q_0}
    \sup_{I_i\in \mathcal{S}_k}\Big\|\int_{x+B_{t_{i}}}d_n\Big\|^{q_0}\\
   = & I+II.
\end{align*}
In the first term $I$, we need to consider the set $\mathcal{I}(x+B_{t_{i+1}}\setminus B_{t_{i}},n)$ for any $I_i\in \mathcal{S}$, which is defined as before. By the fact that $\int_H d_n=0$ for any $H\in \mathbb{D}_n$, we first have
$$\int_{x+B_{t_{i+1}}\setminus B_{t_i}} d_n=\int_{\mathcal{I}(x+B_{t_{i+1}}\setminus B_{t_{i}},n)}d_n.$$
Since $B_{t_{i+1}}\setminus B_{t_i} \subset B_{2^{k+1}}\setminus B_{2^k}$, the measure of $\mathcal{I}(x+B_{t_{i+1}}\setminus B_{t_i},n)$ is not more than a constant multiple of $2^n 2^{(d-1)k}$.
Noting $|B_{2^k}|\approx 2^{kd}$ and using the H\"older inequality, we estimate the first term as
\begin{align*}
 I & \leq C\frac1{(2^{k d})^{q_0}} (2^n 2^{(d-1)k})^{q_0-1} \sum_{I_i\in\mathcal{S}_k}
     \int_{x+B_{t_{i+1}}\setminus B_{t_i}}\|d_n\|^{q_0}\\
   & \leq C (2^{n-k})^{q_0-1}\big(\frac1{2^{k d}} \int_{x+B_{2^{k+1}}}\|d_n\|^{q_0}\big)\\
   & \leq C 2^{n-k}  \big(\frac1{|B_{2^{k+1}}|}\int_{x+B_{2^{k+1}}} |d_n|_{\B}^{q_0}\big).
\end{align*}
In the second term $II$, we have
$$\sum_{I_i\in\mathcal{S}_k}\Big(\frac{1}{|B_{t_{i}}|}-\frac{1}{|B_{t_{i+1}}|}\Big)^{q_0}
  \leq  \Big(\frac{1}{|B_{2^k}|}-\frac{1}{|B_{2^{k+1}}|}\Big)^{q_0}\leq \frac{C}{(2^{kd})^{q_0}}.$$
Then noticing that the measure of $\mathcal{I}(x+B_{t_{i}},n)$ is also not more than a constant multiple of $2^n 2^{(d-1)k}$, and using H\"older inequality and running randomly $A_{t_i}$ for $I_i\in\mathcal{S}_k$, the second term then is estimated as
\begin{align*}
 II  & \leq C\frac1{(2^{dk})^{q_0}} (2^n 2^{(d-1)k})^{q_0-1}\sup_{I_i\in \mathcal{S}_k} \int_{x+B_{t_i}}\|d_n\|^{q_0}\\
     &\leq C 2^{n-k} \big(\frac1{|B_{k+1}|}\int_{x+B_{k+1}} \|d_n\|^{q_0}\big).
\end{align*}
Combining the estimates of $I$ and $II$, we have proved the operator $\mathcal{S}\V_{q_0}(\A)$ is of type $(q_0,q_0)$.
\end{proof}

\subsection{Weak type $(1,1)$ estimates}
By (\ref{vqm by svq0m lvq0m vqe}), the assertion that $\V_{q}(\A)$ is bounded from $L^1(\B)$ to $L^{1,\infty}$ is  an immediate consequence of the weak type $(1,1)$ estimates of $\mathcal{LV}_{q_0}(\A)$,  $\mathcal{SV}_{q_0}(\A)$ and $\V_{q}(\E)$.

As usually, the proofs of the weak type $(1,1)$ inequalities in Theorem \ref{Zd long variation} and Theorem \ref{Zd short variation} follow from the scheme of Calder\'on-Zygmund (cf. e.g. Theorem~II.1.12 in \cite{gar-rubio}). However, as in \cite{JRW03}, we will estimate the $L^{q_0}$ norm of the bad function off the set where the maximal function is large. The following vector-valued Calder\'on-Zygmund decomposition, which plays a key role in proving weak type estimate, should be known somewhere but I can not find it in any literature.
Given  a cube $Q\subset\R^d$, let $Q^*$ denote the cube with the same center as $Q$ but three times the side length.

\begin{lemma}\label{C Z decomposition}
 Let   $f\in L^1(\R^d;\B)$ and  $\lambda >0$.  Then there exists a finite  disjoint family $\{Q_i\}$ of dyadic cubes satisfying the following properties
\begin{itemize}
 \item[\rm(i)] $\|f\| \le \lambda$ on $\Omega^c$, where $\displaystyle\Omega = \bigcup_i Q_i$;
 \item[\rm(ii)] $\displaystyle \lambda < \frac1{|Q_i|} \int_{Q_i} \|f\| \le 2^d\lambda$;
 \item[\rm(iii)] $\displaystyle\Omega\subset\{x\in\R^d: M(\|f\|)(x) > \lambda\}$
                 and $\displaystyle\{x\in\R^d: M(\|f\|)(x) >4^d\lambda\}\subset  \Omega^*$,
                 where $\displaystyle \Omega^* = \bigcup_i Q_i^*$;
 \end{itemize}
Define
\begin{align*}
 &g=f \textrm{ on } \Omega^c\quad\textrm{and}\quad g= \frac1{|Q_i|} \int_{Q_i} f \, \textrm{ on } Q_i  \textrm{ for each }i,\\
 &b= \sum_i b_i, \textrm{ where } b_i = \big(f -\frac1{|Q_i|} \int_{Q_i} f  \big) \un_{Q_i} .
\end{align*}
Then
\begin{itemize}
 \item[\rm(iv)] $f=g+b$;
 \item[\rm(v)] $\|\|g\| \|_\infty \le 2^d\lambda$ and $\|\|g\| \|^{p}_{p}\leq 2^{d(p-1)} \lambda^{p-1}\|\|f\|\|_1$, for any $1\leq p<\infty$;
 \item[\rm(vi)] for each $i$, $\displaystyle  \int_{\R} b_i= 0 $
                and $\displaystyle \frac1{|Q_i|}\int_{\R}\|b_i\| \le 2^{d+1} \lambda$.
 \end{itemize}
\end{lemma}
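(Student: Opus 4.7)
The plan is to run the classical Calder\'on-Zygmund argument on the scalar function $\|f\|\in L^1(\R^d)$ and then transfer the conclusions back to the vector-valued $f$ via the Bochner triangle inequality $\bigl\|\int_E f\bigr\|\le\int_E\|f\|$. Concretely, I would first perform the standard dyadic stopping-time procedure on $\|f\|$ at height $\lambda$: starting from cubes so large that the average of $\|f\|$ is below $\lambda$ (available since $f\in L^1(\R^d;\B)$), subdivide dyadically and select a cube $Q_i$ as soon as its average of $\|f\|$ first exceeds $\lambda$. The selected family $\{Q_i\}$ is automatically disjoint; writing $\widetilde Q_i$ for the dyadic parent of $Q_i$, the stopping rule together with $|\widetilde Q_i|=2^d|Q_i|$ gives (ii), and Lebesgue differentiation on $\Omega^c$ gives (i).

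For (iii), the inclusion $\Omega\subset\{M(\|f\|)>\lambda\}$ is immediate from (ii). For the converse I would use the standard comparison between centered balls and dyadic cubes: if $x\notin\Omega^*$ and $B$ is any ball about $x$, then $B$ can be covered by a bounded (dimension-dependent) number of dyadic cubes of comparable side length, each of which either is disjoint from $\Omega$ (so that the average of $\|f\|$ is at most $\lambda$ by (i)) or strictly contains some $Q_i$ (so that its dyadic predecessor in the stopping tree had average at most $\lambda$); tracking the resulting constants produces the factor $4^d$.

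The decomposition $f=g+b$ in (iv) is tautological. For (v), on each $Q_i$ the Bochner triangle inequality combined with (ii) yields $\|g\|\le 2^d\lambda$, while on $\Omega^c$ one has $\|g\|=\|f\|\le\lambda$ by (i); the same triangle inequality also gives $\int_{\R^d}\|g\|\le\int_{\R^d}\|f\|$, so inserting $\|g\|^p\le(2^d\lambda)^{p-1}\|g\|$ produces the $L^p$ bound. Property (vi) follows directly from the definition of $b_i$ together with one last application of the Bochner triangle inequality on $Q_i$. I do not anticipate any genuine obstacle here: the sole role of the Banach space $\B$ in the argument is the replacement of $|\cdot|$ by $\|\cdot\|$ together with $\bigl\|\int f\bigr\|\le\int\|f\|$, and once this substitution is made the proof is a verbatim transcription of the scalar Calder\'on-Zygmund decomposition; the reason for stating and proving it is simply to pin down the explicit constants that are used by the weak-type $(1,1)$ arguments in the next subsection.
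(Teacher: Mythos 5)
Your proposal is correct and is precisely the route the paper intends: the paper offers no proof of this lemma beyond the remark that the cubes $Q_i$ are obtained by the standard stopping-time argument applied to the scalar function $\|f\|$, and your transfer to the $\B$-valued setting via $\bigl\|\int_E f\bigr\|\le\int_E\|f\|$ is all that is needed for (iv)--(vi). The only point worth writing out in (iii) is the exhaustiveness of your dichotomy: choosing the covering dyadic cubes with side length $2^k\in[r,2r)$, the nestedness of dyadic cubes leaves a third case, namely that a covering cube is contained in some selected $Q_i$, and it is exactly the hypothesis $x\notin\Omega^*$ that rules this out (otherwise $x\in Q_i^*$); with that case excluded the remaining cubes have average at most $\lambda$ and the bound $2^d\lambda(2^k)^d/r^d\le 4^d\lambda$ gives the stated constant.
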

Note that the cubes $Q_j$'s in the lemma are selected by applying standard stopping arguments
to the scalar-valued function $\|f\|$. Thus for any $Q_j$, there exists $n\in \Z$, such that $Q_j\in \mathbb{D}_n$. In the rest of this paper, we denote by $\Q$ the collection of all $Q_j$'s, and the collections
$\Q_n=\mathbb{D}_n \cap \Q$, for all $n\in \Z$.

Now let us give the proof of the weak type $(1,1)$ estimate in Theorem \ref{Zd long variation}.
\begin{proof}
Let $\lambda>0$. By triangle inequality, keeping the notation in Lemma  \ref{C Z decomposition}, we have
 $$|\{\mathcal{LV}_{q_0}(\A)f>\lambda\}|\leq|\{\mathcal{LV}_{q_0}(\A)g>\lambda/2\}|+|\{\mathcal{LV}_{q_0}(\A)b>\lambda/2\}|.$$
The first term on the right is estimated by the $L^{q_0}$-boundedness of $\mathcal{LV}_{q_0}(\A)$  and (v)
in Lemma  \ref{C Z decomposition}:
\begin{align*}
 |\{\mathcal{LV}_{q_0}(\A)g>\lambda/2\}|& \leq  C_{q_0} \lambda^{-q_0}\int_{\R^d} |\mathcal{LV}_{q_0}(\A)g(x)|^{q_0}dx\\
    &\leq  C_{q_0} \lambda^{-q_0}\|\|g\|\|^{q_0}_{q_0}\leq C_{q_0} \lambda^{-1}\|\|f\|\|_1.
\end{align*}
Our main task is to prove a similar estimate for the bad part $b$. We have
 $$|\{\mathcal{LV}_{q_0}(M)b>\lambda/2\}|\leq|\Omega^*|+|(\R^d\setminus \Omega^*)\cap\{\mathcal{LV}_{q_0}(M)b>\lambda/2\}|.$$
The first term is estimated by Lemma \ref{C Z decomposition} (ii) as
 $$|\Omega^*_j|\leq \sum_j|Q^*_j|\leq C_d\sum_j|Q_j|\leq C_d
   \lambda^{-1} \sum_j\int_{Q_j} \|f(x)\|dx \leq C_d \lambda^{-1} \|\|f\|\|_1.$$
It remains to treat the second term. By Chebyshev's inequality, we first have
\begin{align*}
 |(\R^d\setminus\Omega^*)\cap\{\mathcal{LV}_{q_0}(\A)b>\lambda/2\}|
    &\leq C_{q_0} \lambda^{-q_0} \int_{\R^d\setminus\Omega^*}|\mathcal{LV}_{q_0}(\A)b(x)|^{q_0}dx\\
    & =C_{q_0} \lambda^{-q_0}  \int_{\R^d\setminus\Omega^*}\sum_k \|A_{2^k}b-\E_kb(x)\|^{q_0}dx.
\end{align*}

In order to finish the proof, we shall use again the almost orthogonality principle Lemma \ref{almost orthogonality}.
Write $b$ as
  $$b=\sum_n \sum_{j:Q_j\in\Q_n} b_j=\sum_n h_n.$$

Clearly, $x\notin Q^*$ implies $\E_k b=0$, for every $k$.
Indeed, if $k\leq n$, then $\E_k h_n(x)=0$ since the atom of $\sigma_k$ containing $x$ is disjoint from the support of $h_n$; if $n<k$, for each $Q_j\in \Q_n$, there exists an atom of $\sigma_k$ containing $Q_j$. Then by Lemma \ref{C Z decomposition}, (vi), we get  $\E_kb_j(x)=0$. Hence $\E_kh_n(x)=0$.

On the other hand, $A_{2^k}h_n(x)=0$ for $k\leq n$. This is because $x+B_{2^k}$ is disjoint from any of the $Q_j$ in the support of $h_n$.
By these discussions, it suffices to prove
  $$\sum_k \int_{\R^d\setminus \Omega^*}\|A_{2^k}\sum_{n<k}h_n\|\leq{C_{q_0}}{\lambda^{q_0}}\sum_j|Q_j|.$$
since we have $\sum_j|Q_j|\leq \|\|f\|\|_1$ by using  Lemma \ref{C Z decomposition} again. Set $d_n=\sum_{Q_j\in\Q_n}\un_{Q_j}$, Then we have
  $$\sum_n\|d_n\|^{q_0}_{q_0}=\sum_n\|d_n\|_1=\sum_n\sum_{Q_j\in\Q_n} |Q_j|.$$
Hence takeing  $\mathbb{S}_kg =|A_{2_k}g\|$, $u_n=h_n$, $v_n=d_n$ and $\sigma(j)=C2^{-|j|/{q_0}}$ in Lemma \ref{almost orthogonality}, it suffices to prove for $k>n$
  $$\|\|A_{2^k}h_n\|\|^{q_0}_{q_0}\leq C_{q_0}2^{n-k}\lambda^{q_0}\|d_n\|^{q_0}_{q_0}.$$
This follows by integrating both sides of the following pointwise inequality
\begin{align}\label{5 esti in proof of LV}
 \|A_{2^k}h_n\|^{q_0}\leq C_{q_0}2^{n-k}\lambda^{q_0}A_{2^{k+1}}|d_n|
\end{align}
using the fact $d_n=d_n^{q_0}$. Let us give a notation which we have used several times:
 $$\mathcal{I}(x+B_{2^k},n)=\cup\{ x+B_{2^k}\cap Q_i | Q_i\in \Q_n,~x+\partial B_{2^k}\cap Q_i\neq \emptyset\}.$$
Then by (ii) in Lemma \ref{C Z decomposition} and noting that $n<k$, we have
\begin{align*}
 \big\| \int_{x+B_{2^k}}h_n(x)dx \big\|& =\int_{\mathcal{I}(B_{2^k})}\|h_n(x)\| dx
       \leq \sum_{\substack{Q_i\in \Q_n\\  x+\partial B_{2^k}\cap Q_i\neq \emptyset}}\int_{Q_i}\|h_n(x)\| dx\\
 & \leq  2\cdot 2^d\lambda \sum_{\substack{Q_i\in \Q_n\\  x+\partial B_{2^k}\cap Q_i\neq \emptyset}} Q_i
       =  C \lambda \int_{x+B_{2^{k+1}}}|d_n|.
\end{align*}
Since we know that the measure of $\mathcal{I}(x+B_{2^k},n)$ is not more than a constant multiple of $2^n2^{(d-1)k}$, we then give another estimate
\begin{align*}
\big\|\frac{1}{|B_{2^k}|}\int_{x+B_k}h_n(x)dx \big\| & \leq \frac{1}{|B_{2^k}|}\int_{\mathcal{I}(B_{2^k})}\|h_n(x)\| dx
      \leq \frac{C \lambda}{|B_{2^k}|}\int_{\mathcal{I}(B_{2^k})}|d_n(x)| dx\\
 & \leq C \frac{\lambda |\mathcal{I}(B_{2^k})|}{|B_{2^k}|} \leq C \frac{\lambda 2^n 2^{(d-1)k}}{2^{kd}}
      \leq C 2^{n-k} \lambda.
\end{align*}
Putting the above two estimates together,  we then deduce (\ref{5 esti in proof of LV}) as follows:
\begin{align*}
 \|A_{2^k}h_n\|^{q_0}& =\|A_{2^k} h_n\|^{q_0-1}\|A_{2^k}h_n\|\\
   & \leq C 2^{(n-k)(q_0-1)}\lambda^{q_0}A_{2^{k+1}}|d_n|\leq C 2^{n-k}\lambda^{q_0}A_{2^{k+1}} |d_n|.
\end{align*}
Then we finish the proof of weak $(1,1)$ of the long variation operator.
\end{proof}

The proof of weak type (1,1) estimate (\ref{Zd short variation 1 estimate}) in Theorem \ref{Zd short variation} is a variant of that in Theorem \ref{Zd long variation}. Let us explain it briefly.
\begin{proof}
By the fact that $\mathcal{SV}_{q_0}(A)$ is of strong type $(q_0,q_0)$, it suffices to prove
 $$\int_{\R^d\setminus\Omega^*} \sum_k\sum_{I_i\in\mathcal{S}_k} \|A_{t_{i+1}}b-A_{t_i} b\|^{q_0}
    \leq{C_{q_0}}{\lambda^{q_0}}\sum_j|Q_j|.$$
By an argument similar to the one given in the proof of Theorem \ref{Zd long variation},  we only need to show the pointwise estimate
\begin{align}\label{3 esti in proof of SV}
\sum_{I_i\in\mathcal{S}_k}\|A_{t_{i+1}}h_n-A_{t_i}h_n\|^{q_0}\leq C_{q_0}2^{n-k}\lambda^{q_0}A_{2^{k+2}}|d_n|.
\end{align}
The left hand side can be controlled by, modulo a constant depending on $q_0$, the sum of
\begin{align*}
 J_1=\sum_{I_i\in\mathcal{S}_k}\big\|\frac{1}{|B_{t_{i+1}}|}\int_{x+(B_{t_{i+1}}\setminus B_{t_i}) }h_n\big\|^{q_0}
\end{align*}
and
\begin{align*}
J_2=\sum_{I_i\in\mathcal{S}_k}\big(\frac{1}{|B_{t_i}|}-\frac{1}{|B_{t_{i+1}}|}\big)^{q_0}
  \big\| \sup_{I_i\in \mathcal{S}}\int_{x+B_{t_{i}}}h_n\big\|^{q_0}.
\end{align*}
The first term $J_1$ can be treated as
\begin{align*}
 J_1\leq C_{q_0} 2^{n-k} \frac{1}{ 2^{kd}} \sum_{I_i\in\mathcal{S}_k} \int_{x+(B_{t_{i+1}}\setminus B_{t_i})}
     \|h_n \|^{q_0} \leq C_{q_0} 2^{n-k} \frac{1}{ 2^{kd}} \int_{x+B_{2^{k+1}}} \|h_n \|^{q_0}.
\end{align*}
The second term $J_2$ can be treated as
\begin{align*}
  J_2& \leq C \sum_{I_i\in\mathcal{S}_k} \big(\frac{1}{|B_{t_i}|}-\frac{1}{|B_{t_{i+1}}|}\big)^{q_0}
       \int_{x+B_{2^k}}\big\|h_n\big\|^{q_0}\\
     & \leq C \big(\frac{1}{|B_{2^k}|}-\frac{1}{|B_{2^{k+1}}|}\big)^{q_0}\int_{x+B_{2^k}}\big\|h_n\big\|^{q_0}
       \leq  C_{q_0} 2^{n-k} \frac{1}{ 2^{kd}} \int_{x+B_{2^k}}\big\|h_n\big\|^{q_0}.
\end{align*}
Then we use the similar arguments as the preceding  proof of the long variation. And give
\begin{align*}
  J_1+J_2\leq  C_{q_0}2^{n-k}\lambda^{q_0}A_{2^{k+2}}|d_n|.
\end{align*}
This is just (\ref{3 esti in proof of SV}).
\end{proof}

\subsection{$(L^{\infty}_c,BMO)$ estimates}
Since $\|f\|_{BMO_d}\leq\|g\|_{BMO_d}$ can not be deduced from $f\leq g$, the fact that $\V_q(\A)$ is bounded from $L^{\infty}_c(\B)$ to $BMO_d$ can not be directly deduced from the $(L^{\infty}_c,BMO)$ estimates of  $\mathcal{LV}_{q_0}(\A)$,  $\mathcal{SV}_{q_0}(\A)$ and $\V_{q}(\E)$ through (\ref{vqm by svq0m lvq0m vqe}). However, we will see at the end of this subsection that an argument similar to the proof of (\ref{vqm by svq0m lvq0m vqe}) will be used to deduce the estimate for $\V_q(\A)$ follows from the similar estimates for $\mathcal{LV}_{q_0}(\A)$ and  $\mathcal{SV}_{q_0}(\A)$.

We shall use the equivalent definition of $BMO$ norm (or $BMO_d$ norm),
\begin{align*}
\|g\|_{BMO}\simeq\sup_{Q}\inf_{a_{Q}}\frac{1}{|Q|}\int_{Q}|g-a_{Q}|.
\end{align*}
Let us first prove that $\mathcal{LV}_{q_0}(\A)$ is bounded from $L^{\infty}_c(\R^d;\B)$ to $BMO_d(\R^d)$.
\begin{proof}
Give $f\in L^{\infty}_c(\R^d,\B)$, and a dyadic cube $Q$. We decompose $f$ as
 $f=f\un_{ Q^* }+f\un_{\R^d\setminus Q^*}=f_1+f_2$,
where $Q^*$ is the cube with the same center as $Q$ but three times the side length as the definition in Lemma
\ref{C Z decomposition}. We shall take $a_Q=\mathcal{LV}_{q_0}(\A)f_2(c_Q)$ where $c_Q$ is the center of $Q$. Write $\mathcal{LV}_{q_0}(\A)f-a_Q$ as
$$\mathcal{LV}_{q_0}(\A)f-a_Q=\mathcal{LV}_{q_0}(\A)f-\mathcal{LV}_{q_0}(\A)f_2+\mathcal{LV}_{q_0}(\A)f_2-a_Q,$$
by triangle inequalities,
\begin{align*}
&\frac{1}{|Q|}\int_{Q}|\mathcal{LV}_{q_0}(\A)f -a_{Q}|\\
&\leq\frac{1}{|Q|}\int_{Q}|\mathcal{LV}_{q_0}(\A)f-\mathcal{LV}_{q_0}(\A)f_2|\\
 &\quad            +\frac{1}{|Q|}\int_Q|\mathcal{LV}_{q_0}(\A)f_2(x)- \mathcal{LV}_{q_0}(\A)f_2(c_Q)|dx\\
&\leq\frac{1}{|Q|}\int_{Q}|\mathcal{LV}_{q_0}(\A)(f_1)|\\
&\quad+\frac{1}{|Q|}\int_Q\big(\sum_k\|A_{2^k}f_2(x)-\E_k f_2(x)
            -(A_{2^k}f_2(c_Q)-\E_kf_2(c_Q))\|^{q_0}\big)^{\frac{1}{q_0}}dx\\
&=I+II.
\end{align*}
The first term $I$ is easily estimated by the fact that $\mathcal{LV}_{q_0}(\A)$ is of strong type $(q_0,q_0)$. Indeed,
\begin{align*}
I\leq\big(\frac{1}{|Q|}\int_{Q}|\mathcal{LV}_{q_0}(\A)(f_1)|^{q_0}\big)^{\frac{1}{q_0}}\leq
 C_{q_0}\big(\frac{1}{|Q|}\int_{\R^d}\|f_1\|^{q_0}\big)^{\frac{1}{q_0}}\leq C_{q_0,d}\|\|f\|\|_{\infty}.
\end{align*}
The second term $II$ is controlled by a constant multiple of $\|\|f\|\|_{\infty}$ once we prove that for any $x\in Q$
$$\big(\sum_k\|A_{2^k}f_2(x)-\E_kf_2(x)-(A_{2^k}f_2(c_Q)-\E_kf_2(c_Q))\|^{q_0}\big)^{\frac{1}{q_0}}
  \leq C_{q_0}\|\|f\|\|_{\infty}.$$
If $2^k<\ell(Q)$, then $\E_k f_2$ is supported in $\R^d\setminus Q$ and $B_{2^k}+x$ is contained in $Q^*$ since for any $y\in B_{2^k}$
 $$|x_i+y_i-(c_Q)_i|\leq|x_i -(c_Q)_i|+|y_i|\leq \frac{1}2\ell(Q)+ 2^{k}< \frac32 \ell(Q),$$
where $x_i$ denotes the $i$th element of $x$. Then in this case, we get
$$A_{2^k}f_2(x)-\E_kf_2(x)-(A_{2^k}f_2(c_Q)-\E_kf_2(c_Q))=0,~\mbox{for any}~  x\in Q.$$
Hence it suffices to consider the case $2^k\geq \ell(Q)$. Note that in this case, $Q$ should be contained in some atom of $\sigma_k$, so $\E_kf_2(x)=\E_k f_2(c_Q)$. On the other hand,
\begin{align*}
\|A_{2^k} f_2(x)-A_{2^k} f_2(c_Q)\| &=\frac{1}{|B_{2^k}|} \|\int_{B_{2^k}+x}f_2 -\int_{B_{2^k}+ c_Q}f_2 \|\\
&=\frac{1}{2^{kd}} \|\int_{\R^d}f_2( \un_{ B_{2^k}+ c_Q\setminus  B_{2^k}+x}-\un_{B_{2^k} +c_Q\setminus B_{2^k}+x}) \|\\
&\leq\frac{1}{2^{kd}} \|\|f\|  \|_{\infty} \int_{\R^d}\un_{ B_{2^k}+ c_Q \bigtriangleup  B_{2^k}+x}\\
&\leq\frac{1}{2^{kd}}|  B_{2^k}+ c_Q \bigtriangleup  B_{2^k}+x |\cdot \|\|f\| \|_{\infty}.
\end{align*}
Then the fact that $|B_{2^k}+ c_Q \bigtriangleup B_{2^k}+x |\leq C|x-c_Q|^d\leq C|Q|$ yields
 $$\|A_{2^k} f_2(x)-A_{2^k} f_2(c_Q)\|\leq C|Q|\cdot \|\|f\|\|_{\infty} \frac{1}{2^{kd}}.$$
Finally, the fact that $\ell^{q_0}$ norm is not bigger than $\ell^1$ norm implies
 $$II\leq C|Q|\cdot  \|\|f\|\|_{\infty}\sum_{2^k \geq\ell(Q)}\frac{1}{2^{kd}}\leq C  \|\|f\| \|_{\infty}.$$
\end{proof}

The proof of $(L^{\infty}_c, BMO)$ boundedness of $\mathcal{SV}_{q_0}(\A)$ is a variant of that of $\mathcal{LV}_{q_0}(\A)$, let us give a brief explanation.
\begin{proof}
Give $f\in L^{\infty}_c(\B)$, and a cube $Q$. By an argument similar to that in the proof for $\mathcal{LV}_{q_0}(\A)$, it suffices to prove
\begin{align*}
 \sum_{2^k\geq\ell(Q)}\sum_{2^k<t_i<t_{i+1}\leq 2^{k+1}}\|A_{t_{i+1}} f_2(x)-A_{t_{i}} f_2(x)
  -( A_{t_{i+1}}f_2(c_Q)-A_{t_{i}} f_2(c_Q))\|^{q_0},
\end{align*}
denoted by $I^{q_0}$, is dominated by $\|\|f\|\|^{q_0}_{\infty}$ modulo a constant for any $x\in Q$ and any sequence $(t_i)_i$.
In the short variation case, we further split the sum in $\mathcal{S}_k$ to two parts by comparing $t_{i+1}-t_{i}$ and $\ell(Q)$. We shall denote by $II$ the part in which  $t_{i+1}-t_{i}<\ell(Q)$ , by $III$ the part where $t_{i+1}-t_{i}\geq\ell(Q)$. Hence $I$ is dominated by triangle inequality  by $II+III$. Let us first estimate $II$. By the  triangle inequality,
\begin{align*}
II&\leq \big(\sum_{2^k\geq\ell(Q)}\sum_{\substack{I_i\in \mathcal{S}_k\\t_{i+1}-t_{i}<\ell(Q)}}
   \| A_{t_{i+1}}f_2(x)- A_{t_{i}}f_2(x)\|^{q_0}\big)^{\frac{1}{q_0}}\\
&+\big(\sum_{2^k\geq\ell(Q)}\sum_{\substack{I_i\in \mathcal{S}_k \\ t_{i+1}-t_{i}<\ell(Q)}}
   \|A_{t_{i+1}}f_2(c_Q)- A_{t_{i}}f_2(c_Q)\|^{q_0}\big)^{\frac{1}{q_0}}.
\end{align*}
By the fact that $(B_{t_i})_i$ are nested for $i$, for any $z\in Q$ we have
\begin{align*}
&\|A_{t_{i+1}} f_2(z)- A_{t_{i}} f_2(z)\|\\
&=\big\|\big(\frac{1}{|B_{t_{i+1}}|}-\frac{1}{|B_{t_{i}}|}\big)\int_{B_{t_i}+z}f_2
   +\frac{1}{|B_{t_{i+1}}|}\int_{(B_{t_{i+1}}+z)\setminus(B_{t_{i}}+z)}f_2\big\|\\\
&\leq\big(\frac{1}{|B_{t_{i}}|}-\frac{1}{|B_{t_{i+1}}|}\big)\int_{B_{t_i}+z}\|f_2\|
   +\frac{1}{|B_{t_{i+1}}|}\int_{(B_{t_{i+1}}+z)\setminus(B_{t_{i}}+z)}\|f_2\|\\
&\leq\big(\frac{1}{|B_{t_{i}}|}-\frac{1}{|B_{t_{i+1}}|}\big)|B_{t_i}|\|\|f_2\|\|_{\infty}
   +\frac{1}{|B_{t_{i+1}}|}(|B_{t_{i+1}}|-|B_{t_{i}}|)\|\|f_2\|\|_{\infty}\\
&=2(|B_{t_{i+1}}|-|B_{t_{i}}|)\frac{1}{|B_{t_{i+1}}|}\|\|f\|\|_{\infty}\\
&\leq C\|\|f\|\|_{\infty} \int_{|B_{t_i}|}^{|B_{t_{i+1}}|}\frac{1}{u}du ,
\end{align*}
then by the H\"older inequality and the fact that $t_{i+1}-t_{i}\leq\ell(Q)$ in this case,
\begin{align*}
&\leq C\|\|f\|\|_{\infty} (|B_{t_{i+1}}|-|B_{t_{i}}|)^{1-\frac{1}{q_0}}
       \big(\int_{|B_{t_i}|}^{|B_{t_{i+1}}|}\frac{1}{u^{q_0}}du\big)^{\frac1{q_0}}\\
&\leq C_{d,q_0}\|\|f\|\|_{\infty} \ell(Q)^{1-\frac{1}{q_0}} |{t_{i}}|^{(d-1)(1-\frac{1}{q_0})}
      \big(\int_{|B_{t_i}|}^{|B_{t_{i+1}}|}\frac{1}{u^{q_0}}du\big)^{\frac1{q_0}}.
\end{align*}
Hence
\begin{align*}
II&\leq C_{d,q_0} \ell(Q)^{1-\frac{1}{q_0}} \|\|f\|\|_{\infty}\Big(\sum_{2^k\geq\ell(Q)} 2^{k(d-1)(q_0-1)}
         \sum_{I_i\in \mathcal{S}_k} \int_{|B_{t_i}|}^{|B_{t_{i+1}}|}\frac{1}{u^{q_0}}du \Big)^{\frac{1}{q_0}}\\
  &\leq C_{d,q_0}\ell(Q)^{1-\frac{1}{q_0}} \|\|f\|\|_{\infty}\big(\sum_{2^k\geq\ell(Q)} 2^{k(d-1)(q_0-1)}
         \int^{2^{(k+1)d}}_{ 2^{kd}}\frac{1}{u^{q_0}}du \big)^{\frac{1}{q_0}}\\
  &\leq C_{d,q_0}\ell(Q)^{1-\frac{1}{q_0}} \|\|f\|\|_{\infty}\big(\sum_{2^k\geq\ell(Q)}
         \frac{1}{ 2^{k(q_0-1)}}  \big)^{\frac{1}{q_0}}\\
&\leq C_{d,q_0}\|\|f\|\|_{\infty}.
\end{align*}
We end the proof by the estimate of $III$.  By the  triangle inequality,
\begin{align*}
III& \leq \big(\sum_{2^k\geq\ell(Q)}\sum_{\substack{I_i\in \mathcal{S}_k\\ t_{i+1}-t_i\geq\ell(Q)}}
     \|A_{t_{i+1}} f_2(x)-A_{t_{i+1}} f_2(c_Q)\|^{q_0} \big)^{\frac{1}{q_0}}\\
   & +\big(\sum_{2^k\geq\ell(Q)} \sum_{\substack{I_i\in \mathcal{S}_k\\ t_{i+1}-t_i\geq\ell(Q)}}
     \|A_{t_i}f_2(x)-A_{t_i}f_2(c_Q)\|^{q_0} \big)^{\frac{1}{q_0}}.
\end{align*}
By an argument similar to that in the proof for $\mathcal{LV}_{q_0}(\A)$, for any $B_{t_i}$ we have
\begin{align*}
\|A_{t_i} f_2(x)- A_{t_i} f_2(c_Q)\| \leq C|Q|\|\|f\|\|_{\infty}\frac{1}{|B_{t_i}|}.
\end{align*}
Note that  the interval of $I_i\in\mathcal{S}_k$ such that $t_{i+1}-t_i\geq\ell(Q)$ is smaller $2^k/\ell(Q)$ modulo a constant, hence
\begin{align*}
III& \leq  C|Q|\|\|f\|\|_{\infty} \big(\sum_{2^k\geq\ell(Q)} \sum_{\substack{I_i\in \mathcal{S}_k\\ t_{i+1}-t_i\geq\ell(Q)}}
     \frac{1}{|B_{t_i}|^{q_0}}\big)^{\frac{1}{q_0}}\\
&\leq C_{q_0}|Q|\|\|f\|\|_{\infty} \big(\sum_{2^k\geq\ell(Q)}\frac{2^{k}}{\ell(Q)}\frac{1}{2^{kdq_0}}\big)^{\frac{1}{q_0}}\\
&\leq  C_{q_0}\|\|f\|\|_{\infty}.
\end{align*}
\end{proof}

Now let us explain briefly the proof of the fact that  $\V_q(\A)$ is bounded from $L^{\infty}_c(\B)$ to $BMO_d$.  Give $f\in L^{\infty}_c(\B)$ and a cube $Q$, use the fact that $\ell^{q}$ norm  is not bigger than $\ell^{q_0}$ norm for $q>q_0$, it suffices to prove
\begin{align*}
\big(\sum_{(t_i)_i}\|A_{t_{i+1}} f_2(x)-A_{t_{i}}f_2(x)- (A_{t_{i+1}}f_2(c_Q)-A_{t_{i}} f_2(c_Q))\|^{q_0} \big)^{\frac{1}{q_0}},
\end{align*}
is dominated by $\|\|f\| \|_{\infty}$ modulo a constant for any $x\in Q$ and any regular sequence $(A_{t_i})_i$. As in the proof of (\ref{vqm by svq0m lvq0m vqe}), we divide the intervals $I_i$'s into two sets $\mathcal{S}$ and $\mathcal{L}$. Then the term $\sum_{I_i\in\mathcal{S}}$ is dealt with  in the same way as  that  for $\mathcal{SV}_{q_0}(\A)$.

For $I_i\in\mathcal{L} $, we can rewrite $I_i=(2^{n_i}, 2^{n_{i+1}}]$. Then in the term $\sum_{I_i\in\mathcal{L}}$, we can write
\begin{align*}
&A_{t_{i+1}}f_2(x)-A_{t_{i}} f_2(x)-(A_{t_{i+1}}f_2(c_Q)-A_{t_{i}} f_2(c_Q))\\
&=A_{2^{n_{i+1}}} f_2(x)-\E_{n_{i+1}} f_2(x)-(A_{2^{n_{i+1}}} f_2(c_Q)-\E_{n_{i+1}}f_2(c_Q))\\
&+A_{2^{n_i}} f_2(x)-\E_{n_i}f_2(x)-(A_{2^{n_i}} f_2(c_Q)-\E_{n_i}f_2(c_Q))\\
&+\E_{n_{i+1}}f_2(x)-\E_{n_i}f_2(x)-(\E_{n_{i+1}}f_2(c_Q)-\E_{n_i}f_2(c_Q)),
\end{align*}
where $\E_{t_i}$ is defined to be $2^k<t_i\leq 2^{k+1}$.
Then by triangle inequality, the three associated terms on the right hand side of the previous equality can be dealt with the same way as that for $\mathcal{LV}_{q_0}(\A)$.

The same argument for the proof of Theorem \ref{Zd q variation} works also for vector-valued $q$-variation associated to the family of differential operators on $\mathbb{Z}^d$. For the application to ergodic theory, let us state our results as follows in the case $d=1$. Let $f\in\ell^{1}(\mathbb{Z};\B)$. For any integer $n\geq0$, define
$$A_nf(j)=\frac{1}{n+1}\sum^{n}_{k=0}f(j-k).$$

\begin{corollary}\label{cor:Z1 q variation}
Let $2\leq q_0<q<\infty$ and $\B$ is of martingale cotype $q_0$. Then for any $1<p<\infty$, there exist a constant $C_{p,q}$ such that
\begin{align}\label{Z1 q variation p estimates}
\|\V_q(A)f\|_{p}\leq C_{p,q}\|\|f\|\|_{p},\;\forall f\in \ell^p(\Z;\B);
\end{align}
\end{corollary}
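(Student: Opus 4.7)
The plan is to follow the proof of Theorem \ref{Zd q variation} \emph{verbatim} in the discrete setting on $\Z$, as foreshadowed by the remark just preceding the corollary. The one-sided discrete averages $A_n$ play the role of the continuous centered averages, and the almost-orthogonality machinery developed in Section 2 carries over with only minor adjustments.

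First I would set up the discrete dyadic filtration $(\E_n)_{n\in\Z}$ on $\Z$, whose atoms at scale $n\ge 0$ are the intervals $[k\cdot 2^n,(k+1)\cdot 2^n)\cap\Z$, and then introduce the decomposition analogous to \eqref{vqm by svq0m lvq0m vqe}. For an increasing sequence $(n_i)$ of non-negative integers, split each interval $(n_i,n_{i+1}]$ according to whether it contains a power of $2$, and define $\SV_{q_0}(A)$ and $\LV_{q_0}(A)$ accordingly; this yields
\[
  \V_q(A)f \le \SV_{q_0}(A)f + \LV_{q_0}(A)f + \V_q(\E)f,
\]
the last term being controlled in $\ell^p$ by the Pisier-Xu inequality \eqref{pisierxu}.

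For the long and short variation operators I would prove the strong type $(q_0,q_0)$ bound by applying Lemma \ref{almost orthogonality} with $u_n=v_n=d_n=\E_{n-1}f-\E_nf$, the key pointwise bounds being the discrete analogs of $\|A_{2^k}d_n\|^{q_0}\le C\, 2^{n-k}A_{2^{k+1}}\|d_n\|^{q_0}$ for $k\ge n$ (from the cancellation $\sum_{j\in H}d_n(j)=0$ over a dyadic atom $H$) and $\|A_{2^k}d_n-d_n\|\le C\, 2^{k-n}m_H$ for $k<n$ (from the constancy of $d_n$ on dyadic atoms). The weak type $(1,1)$ estimate then follows via the vector-valued Calder\'on-Zygmund decomposition of Lemma \ref{C Z decomposition} transported to $\Z$, a dyadic BMO endpoint is obtained exactly as in Section 2.3, and Marcinkiewicz interpolation together with duality yields the full range $1<p<\infty$.

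The main obstacle is purely bookkeeping: one must adapt the boundary sets $\mathcal{H}(B,H)$ and $\mathcal{I}(x+B,n)$ from the continuous proof to the one-sided discrete setting. In dimension one, however, the boundary of a discrete interval of length $2^k$ consists of only $O(1)$ points (versus $2^{(d-1)k}$ in the continuous $\R^d$ case), so the relevant geometric estimates are in fact sharper and no new ideas beyond those of Section 2 are required.
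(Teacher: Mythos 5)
Your proposal is correct and is essentially the paper's own proof: the paper disposes of this corollary by noting that the argument of Theorem \ref{Zd q variation} (decomposition into short/long variations plus the martingale term handled by Pisier--Xu, the almost-orthogonality Lemma \ref{almost orthogonality}, the Calder\'on--Zygmund and BMO endpoints, then interpolation) carries over verbatim to the discrete setting on $\Z^d$, which is exactly what you carry out, with the correct observation that the one-sided discrete geometry only makes the boundary estimates easier. The only cosmetic remark is that the appeal to ``duality'' is unnecessary: once you have the weak $(1,1)$, strong $(q_0,q_0)$ and $(\ell^\infty_c,BMO_d)$ endpoints, interpolation alone covers all $1<p<\infty$.
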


\section{Necessity of Rademacher cotype $q$}

As in the martingale case, one may wonder whether a reverse statement of Theorem \ref{Zd q variation} remains true. At the moment of writing, we do not know how to conclude that $\B$ is of martingale cotype  $q$ from the inequality (\ref{Zd q variation p estimates}). But we can prove the following result.
\begin{theorem}\label{thm: rademacher cotype q}
Let $2\leq q<\infty$ and $\B$ be a Banach space. If there exists a $1<p<\infty$ and a constant $C_{p,q}$ such that
\begin{align}\label{R1 q variation p estimates}
\|\V_q(A)f\|_{p}\leq C_{p,q}\|\|f\|\|_{p},\;\forall f\in L^p(\R;\B),
\end{align}
then $\B$ is of Rademacher cotype $q$.
\end{theorem}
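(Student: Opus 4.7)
The plan is to test the hypothesised inequality (\ref{R1 q variation p estimates}) on a randomised function built from an arbitrary finite sequence $x_1,\dots,x_n\in\B$, and to recover Rademacher cotype $q$ by averaging over the signs. Let $r_i$ denote the Rademacher functions on $[0,1]$ (extended by $0$ to $\R$), let $\varepsilon=(\varepsilon_i)\in\{\pm1\}^n$, and set
$$f_\varepsilon(y):=\sum_{i=1}^n \varepsilon_i\,x_i\,r_i(y),\qquad y\in\R.$$
Since for each $y\in[0,1]$ the vector $(\varepsilon_i r_i(y))_i$ is distributed as an independent Rademacher sequence, Fubini gives
$$\mathbb{E}_{\varepsilon}\|\,\|f_\varepsilon\|\,\|_p^p=\int_0^1\mathbb{E}_{\varepsilon}\bigl\|\textstyle\sum_i \varepsilon_i x_i r_i(y)\bigr\|^p\,dy=\mathbb{E}_{\varepsilon}\bigl\|\textstyle\sum_i \varepsilon_i x_i\bigr\|^p.$$
Thus, after taking $\mathbb{E}_{\varepsilon}$, the right-hand side of (\ref{R1 q variation p estimates}) becomes exactly the Rademacher norm of $(x_i)$ needed for cotype.

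The heart of the argument is the matching lower bound for the left-hand side. I would test the variation at the lacunary dyadic sequence $t_m=2^{-m}$ and exploit two structural facts about $A_{t_m}r_i$: first, whenever $i\ge m$ and $y$ is at distance at least $2^{-m}$ from $\{0,1\}$, the averaging interval $[y-2^{-m},y+2^{-m}]$ covers an integer multiple of the period of $r_i$, so $A_{t_m}r_i(y)=0$ exactly; second, for $i<m$ and $y$ outside the $2^{-m}$-neighbourhood of the breakpoints of $r_i$, $A_{t_m}r_i(y)=r_i(y)$. At $i=m$ a direct calculation produces the explicit triangle wave $\alpha_m(y):=A_{t_{m+1}}r_m(y)$, of unit amplitude and satisfying $\int_0^1|\alpha_m(y)|^q\,dy=1/(q+1)$. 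Consequently the dyadic increment decomposes as
$$A_{t_{m+1}}f_\varepsilon(y)-A_{t_m}f_\varepsilon(y)=\varepsilon_m x_m\,\alpha_m(y)+Y_m(\varepsilon,y),$$
with $Y_m$ depending only on $\varepsilon_j$ for $j<m$. The elementary convexity bound $\|a+b\|^q+\|-a+b\|^q\ge 2\|a\|^q$, applied after conditioning on $\varepsilon_m$, yields
$$\mathbb{E}_{\varepsilon}\bigl\|A_{t_{m+1}}f_\varepsilon(y)-A_{t_m}f_\varepsilon(y)\bigr\|^q\ge|\alpha_m(y)|^q\,\|x_m\|^q.$$
Summing in $m$, integrating in $y$ over a subset of $[0,1]$, and using the computation of $\|\alpha_m\|_{L^q}$, I obtain
$$\int_0^1 \mathbb{E}_{\varepsilon}\bigl[\V_q(\A)f_\varepsilon(y)\bigr]^q\,dy\ \gtrsim\ \sum_{m=1}^n\|x_m\|^q.$$

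Combining this with (\ref{R1 q variation p estimates}) raised to the $p$th power and averaged over $\varepsilon$, and invoking Kahane--Khintchine both to compare $L^p$ and $L^q$ moments in $\varepsilon$ of the Rademacher sum $\sum\varepsilon_i x_i$ and, when $p<q$, to compare $L^p$ and $L^q$ moments of $\V_q(\A)f_\varepsilon(y)$ as a supremum of linear Rademacher polynomials in $\varepsilon$, produces
$$\Bigl(\sum_{i=1}^n\|x_i\|^q\Bigr)^{1/q}\le C_q\Bigl(\mathbb{E}_{\varepsilon}\bigl\|\textstyle\sum_i \varepsilon_i x_i\bigr\|^q\Bigr)^{1/q},$$
which is exactly Rademacher cotype $q$.

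The main obstacle is making the second structural property above quantitatively sharp: for $i$ close to $m$ (e.g.\ $i=m-1$) the identity $A_{t_m}r_i(y)=r_i(y)$ fails on a set of $y$ that is not small. A direct computation shows that $A_{t_m}r_{m-1}$ is still a triangle-wave correction of $r_{m-1}$, so the resulting extra contributions to the increment take the form $\varepsilon_j x_j\beta_{m,j}(y)$ with $\beta_{m,j}$ independent of $\varepsilon_m$; they are harmless because they are absorbed into $Y_m$ in the $\varepsilon_m$-conditioning step. If one prefers to avoid the bookkeeping altogether, passing to sparser scales $t_m=2^{-Nm}$ for $N$ large quantitatively shrinks the transitional regions around the breakpoints and makes the clean identities hold on sets of arbitrarily large measure.
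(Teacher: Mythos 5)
Your construction and the symmetrization lower bound are correct, and they follow a genuinely different route from the paper's: the paper first transfers (\ref{R1 q variation p estimates}) to Poisson integrals on $\R$ and then, via Poisson summation and Gillespie--Torrea transference, to the Poisson kernel on the circle, and concludes with Pisier's lacunary Fourier-coefficient argument, whereas you test the hypothesis directly on Rademacher step functions at dyadic scales. Your key conditional estimate $\mathbb{E}_{\varepsilon_m}\|\varepsilon_m x_m\alpha_m(y)+Y_m\|^q\ge|\alpha_m(y)|^q\|x_m\|^q$ needs only that the contributions with $i\neq m$ do not involve $\varepsilon_m$, so the transitional terms near $i=m$ you worry about are indeed harmless, and the boundary/small-$m$ defects are minor bookkeeping (shift indices or sparsify the scales, as you note). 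When $p\ge q$ your conclusion is complete: H\"older on $[0,1]$ gives $\int_0^1[\V_q(\A)f_\varepsilon(y)]^q\,dy\le\|\V_q(\A)f_\varepsilon\|_{L^p}^q$, the right-hand side $\|\,\|f_\varepsilon\|\,\|_p$ is exactly the $L^p$ Rademacher average of $\sum_i\delta_i x_i$ (independently of $\varepsilon$), and Kahane--Khintchine converts it to the $L^q$ average, yielding cotype $q$.

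The genuine gap is the case $p<q$. Kahane--Khintchine only equilibrates moments in the variable $\varepsilon$; after the pointwise-in-$y$ step $\mathbb{E}_\varepsilon[\V_q(\A)f_\varepsilon(y)]^q\le K^q\big(\mathbb{E}_\varepsilon[\V_q(\A)f_\varepsilon(y)]^p\big)^{q/p}$ you still have to integrate in $y$, and with $G(y)=\mathbb{E}_\varepsilon[\V_q(\A)f_\varepsilon(y)]^p$ the needed inequality $\int_0^1 G^{q/p}\,dy\lesssim\big(\int_0^1 G\,dy\big)^{q/p}$ goes the wrong way because $q/p>1$: the hypothesis controls only $\int G\,dy$ (an $L^p_y$ quantity), while your lower bound is an $L^q_y$ quantity, and there is no a priori pointwise bound on $\V_q(\A)f_\varepsilon$ of the right size to interpolate with. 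The paper sidesteps this by observing that (\ref{R1 q variation p estimates}) self-improves to all $1<p<\infty$ (via the weak $(1,1)$ and $(L^\infty_c,BMO_d)$ endpoints and interpolation, as in Theorem \ref{Zd q variation}(ii)) and then taking $p=q$. To salvage your route you must either invoke the same self-improvement, or change the lower bound: for instance apply Kahane in the space $\ell^q(\B)$ to get $\mathbb{E}_\varepsilon[\V_q(\A)f_\varepsilon(y)]^p\gtrsim\big(\sum_m\|x_m\|^q|\alpha_m(y)|^q\big)^{p/q}$ pointwise in $y$, and then prove $\int_0^1\big(\sum_m c_m|\alpha_m(y)|^q\big)^{p/q}dy\gtrsim\big(\sum_m c_m\big)^{p/q}$ by a Paley--Zygmund/second-moment argument exploiting the near-independence of the level sets of the triangle waves $\alpha_m$. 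Neither of these steps is in your sketch, so as written the proof only covers $p\ge q$.
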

From the proof of Theorem \ref{Zd q variation}, we see that the previous inequality (\ref{R1 q variation p estimates}) holds for all $1<p<\infty$. In particular, we will use the inequality in the case $p=q$. Two transference lemmata are needed for the proof of Theorem \ref{thm: rademacher cotype q}. The first one says that inequality (\ref{R1 q variation p estimates}) remains true if the differential operator $A$ is replaced with Poisson integral $P$.
\begin{lemma}\label{lem: poisson R1}
Let $P(x)=1/{\pi}(1+|x|^2)^{-1}$ and $P_t(x)=1/tP(x/t)$. Define $P_tf(x)=P_t\ast f(x)$. Let $\B$ be a Banach space. If inequality (\ref{R1 q variation p estimates}) holds, then
\begin{align}\label{poisson R1 q variation p estimates}
\|\V_q(P)f\|_{p}\leq C_{p,q}\|\|f\|\|_{p},\;\forall f\in L^p(\R;\B).
\end{align}
\end{lemma}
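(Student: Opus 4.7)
The plan is to realize the Poisson kernel as a non-negative superposition of ball indicators, so that each Poisson integral $P_tf$ becomes an average of the differential operators $A_{ts}f$, and then the $q$-variation of the Poisson family is dominated pointwise by that of $(A_t)_{t>0}$ via Minkowski's inequality.

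First, I would derive a subordination identity of the form
\begin{equation*}
P(x)=\int_0^\infty g(s)\,\frac{1}{2s}\un_{[-s,s]}(x)\,ds
\end{equation*}
with an explicit non-negative density $g$ on $(0,\infty)$ satisfying $\int_0^\infty g(s)\,ds=1$. Writing the Poisson kernel as the layer-cake integral of its level sets, one finds $g(s)=\frac{4s^{2}}{\pi(1+s^{2})^{2}}$; a one-line substitution $s=\tan\theta$ verifies that it has total mass one. Rescaling by $t$ and convolving with $f$ yields the key identity
\begin{equation*}
P_tf(x)=\int_0^\infty g(s)\,A_{ts}f(x)\,ds,\qquad t>0,
\end{equation*}
where $A_r$ is the ball average from Section~2 (in dimension one, the interval average over $[-r,r]$).

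Next, fix $x\in\R$ and an arbitrary increasing sequence $(t_k)_{k\geq0}\subset(0,\infty)$. Using the identity above for $P_{t_0}f(x)$ and for each difference $P_{t_k}f(x)-P_{t_{k-1}}f(x)=\int_0^\infty g(s)[A_{t_ks}f(x)-A_{t_{k-1}s}f(x)]\,ds$, I apply Minkowski's inequality for $\ell^{q}$ with respect to the measure $g(s)\,ds$ to get
\begin{equation*}
\Bigl(\|P_{t_0}f(x)\|^{q}+\sum_{k\geq1}\|P_{t_k}f(x)-P_{t_{k-1}}f(x)\|^{q}\Bigr)^{\!\tfrac1q}\leq\int_0^\infty g(s)\,\Bigl(\|A_{t_0s}f(x)\|^{q}+\sum_{k\geq1}\|A_{t_ks}f(x)-A_{t_{k-1}s}f(x)\|^{q}\Bigr)^{\!\tfrac1q}ds.
\end{equation*}
For each fixed $s>0$ the sequence $(t_ks)_{k\geq0}$ is again an increasing sequence of positive reals, so the inner bracket is bounded by $\V_q(\A)f(x)$. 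Since $\int_0^\infty g=1$, taking the supremum over all $(t_k)$ gives the pointwise comparison
\begin{equation*}
\V_q(P)f(x)\leq\V_q(\A)f(x),\qquad x\in\R.
\end{equation*}

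Finally, integrating the $p$-th power and invoking hypothesis~(\ref{R1 q variation p estimates}) yields (\ref{poisson R1 q variation p estimates}). No serious obstacle is expected; the only delicate point is producing the subordination density $g$ with the correct sign and normalization, and ensuring Fubini applies to exchange the $s$-integral with convolution, which is justified by the integrability of $g(s)/(2ts)$ against $|f|$ for $f\in L^p(\R;\B)$.
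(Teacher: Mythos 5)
Your proposal is correct and is essentially the argument the paper intends: the paper proves this lemma by citing Lemma 2.4 of \cite{CJRW00}, whose proof is exactly this subordination of the Poisson kernel to dilated normalized indicators (the remark following the lemma, about even kernels with $\lim_{x\to\infty}\Phi(x)=0$ and $\int\Phi'(x)x\,dx<\infty$, reflects the same integration-by-parts identity you use), followed by Minkowski's integral inequality to get the pointwise bound $\V_q(P)f\leq \V_q(\A)f$ and then the hypothesis (\ref{R1 q variation p estimates}). Your explicit density $g(s)=\frac{4s^{2}}{\pi(1+s^{2})^{2}}$ and normalization check are right, so no gap remains.
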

 The same argument as in Lemma 2.4 of \cite{CJRW00} can be repeated word by word to show Lemma {\ref{lem: poisson R1}}, since $\B$ is a Banach space. We leave the details to the interesting readers.
\begin{remark}
{\rm{(i)}} Actually, Lemma \ref{lem: poisson R1} is true not only for Poisson integrals but for all even functions $\Phi$ satisfying $\lim_{x\rightarrow \infty}\Phi(x)=0$ and $\int_{\R}\Phi'(x)xdx<\infty.$
{\rm (ii)} A higher dimensional version of Lemma \ref{lem: poisson R1} as well as previous {\rm (i)} remains holding, see Lemma 2.4 of \cite{CJRW00}.
\end{remark}

The second lemma says that we can obtain inequality \ref{poisson R1 q variation p estimates}  for Poisson kernel $\mathbb{P}$ on the circle using transference technique developed in \cite{GiTo04}.

\begin{lemma}\label{lem: poisson T1}
Let $\mathbb{P}_t(\theta)=\sum_ne^{-t|n|}e^{2\pi in\theta}$ be the Poisson kernel for the unit disc.  Let $\B$ be a Banach space. If inequality (\ref{R1 q variation p estimates}) holds, then
\begin{align}\label{poisson T1 q variation p estimates}
\|\V_q(\mathbb{P})f\|_{p}\leq C_{p,q}\|\|f\|\|_{p},\;\forall f\in L^p([0,1];\B).
\end{align}
\end{lemma}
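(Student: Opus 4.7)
The plan is to use a de Leeuw--type transference argument from $\R$ to $\T$, adapted to the $q$-variation functional in the spirit of the technique of \cite{GiTo04}. Identify $\T$ with $[0,1]$ and let $\tilde f\in L^p_{\mathrm{loc}}(\R;\B)$ denote the $1$-periodic extension of $f$.

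The starting identification comes from Poisson summation: the $1$-periodic function $\sum_{n\in\Z}P_t(\cdot+n)$ has Fourier coefficients $\widehat{P_t}(n)=e^{-2\pi t|n|}$, hence coincides with $\mathbb{P}_{2\pi t}$. Consequently
$$P_t\ast\tilde f(x)=\mathbb{P}_{2\pi t}\ast f(x),\qquad x\in[0,1],\ t>0,$$
and since the rescaling $t\mapsto 2\pi t$ leaves $V_q(\B)$ unchanged, the conclusion of Lemma~\ref{lem: poisson T1} reduces to proving
$$\|\V_q(P)\tilde f\|_{L^p([0,1])}\leq C\|f\|_{L^p([0,1];\B)}.$$

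Next I transfer the hypothesis (\ref{poisson R1 q variation p estimates}) via a spatial cutoff. Fix $\psi\in C_c^\infty(\R)$ with $\psi\equiv1$ on $[-1,1]$ and $\mathrm{supp}\,\psi\subset[-2,2]$, let $N\in\N$ be large, and set $F_N=\tilde f\cdot\psi(\cdot/N)\in L^p(\R;\B)$. Lemma~\ref{lem: poisson R1} applied to $F_N$, combined with periodicity of $\V_q(P)\tilde f$ in $x$, yields
$$\frac{1}{N}\int_{-N/2}^{N/2}\|\V_q(P)F_N(x)\|^p\,dx\leq\frac{C^p}{N}\|F_N\|_{L^p(\R;\B)}^p\leq 4C^p\|f\|_{L^p([0,1];\B)}^p.$$
The error $E_N(x,t):=P_t\ast\tilde f(x)-P_t\ast F_N(x)$ for $|x|\leq N/2$ is an integral against $P_t(y)$ supported on $|y|\geq N/2$. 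The decay $P_t(y)\lesssim t/|y|^2$ and $|\partial_tP_t(y)|\lesssim 1/|y|^2$ for $|y|\gg t$ control the total variation (hence the $V_q$-variation) of $E_N(x,\cdot)$ on the central window by a quantity vanishing as $N\to\infty$. Combining with Minkowski's inequality and passing to the limit completes the proof.

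The main obstacle is the error analysis in the last step: one must bound the $V_q$-variation in $t$ of $E_N(x,\cdot)$, not merely its pointwise magnitude, uniformly in $x$ on the central window, with something summable after averaging. A further subtlety is that the $\R$-Poisson integrals of $f$ tend to $0$ as $t\to\infty$, whereas the $\T$-Poisson integrals tend to $\int_0^1 f$; this discrepancy is absorbed harmlessly by first replacing $f$ with $f-\int_0^1f$, since constants contribute nothing to $V_q$.
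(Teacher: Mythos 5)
Your overall route differs from the paper's: the paper does not truncate by hand, it applies the abstract transference result of Gillespie and Torrea (Remark 2.13 in \cite{GiTo04}) to the translation group $T^xf(\theta)=f(\theta+x)$ on $L^p([0,1];\B)$, and then identifies the transferred operators $\int_{\R}P_t(x)f(\theta-x)\,dx$ with $\mathbb{P}_t\ast f$ via Poisson summation, which is the same identification you make in your first step. Your de Leeuw--style cutoff-and-average argument is a legitimate self-contained alternative, but as written its decisive step does not close.

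The gap is in the error analysis. With only the stated bounds $P_t(y)\lesssim t/|y|^2$ and $|\partial_tP_t(y)|\lesssim 1/|y|^2$, the best available bound for the total variation in $t$ of $E_N(x,\cdot)$ is $\int_0^\infty\int_{|z|\ge N}|\partial_tP_t(x-z)|\,\|\tilde f(z)\|\,dz\,dt$; since $|\partial_tP_t(y)|\simeq\min(|y|^{-2},t^{-2})$ gives $\int_0^\infty|\partial_tP_t(y)|\,dt\simeq|y|^{-1}$, this is of order $\|f\|_{L^1}\sum_{|k|\ge N}|k|^{-1}$, which diverges; no choice of $N$ helps, and already the sup bound is only $O(1)$ (not $o(1)$) at scales $t\sim N$, where your estimate $t/N$ is useless. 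The missing idea is that the mean-zero reduction $f\mapsto f-\int_0^1f$, which you invoke only to reconcile the $t\to\infty$ limits, is in fact what makes the error summable: once $\tilde f$ has integral zero over every period $[k,k+1]$, you may replace $\partial_tP_t(x-z)$ by its oscillation over that period and use the mixed-derivative bound $|\partial_y\partial_tP_t(y)|\lesssim |y|(t^2+y^2)^{-2}$, whose $t$-integral is $O(|y|^{-2})$; summing over $|k|\gtrsim N$ then yields $\|E_N(x,\cdot)\|_{V_q(\B)}\lesssim\|f\|_{L^1}/N$ uniformly for $|x|\le N/2$, and the limiting argument goes through. Two minor points: with the paper's definition of $V_q(\B)$, which includes the term $\|a_{t_0}\|^q$, a constant family has $V_q$-norm $\|c\|$ rather than $0$, so the reduction costs an extra $\|\int_0^1f\|\le\|f\|_{L^p(\B)}$ via the triangle inequality (harmless, but it should be said); and the normalization discrepancy $\mathbb{P}_{2\pi t}$ versus $\mathbb{P}_t$ is indeed immaterial for $V_q$, as you note.
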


\begin{proof}
Given $f\in L^p([0,1];\B)$. For any $x\in \R$, define $T^{x}f(\theta)=f(\theta+x)$. It can be easily checked that $\{T^x:\;x\in\R\}$ is a  strongly continuous one-parameter group of positive invertible linear operators satisfying $\bold{SH}_p$ in Page 135 of \cite{GiTo04}. Then by Remark 2.13 in \cite{GiTo04}, we have
\begin{align}\label{transference result}
\left\|\sup_{(t_i)_i}\left(\sum_{i}\left\|\int_{\R}(P_{t_i}(x)-P_{t_{i+1}}(x))f(\theta-x)dx\right\|^q\right)^{1/q}\right\|_p\leq C_{p,q}\|\|f\|\|_p.
\end{align}
By the Poisson summation formula, for any $t>0$ we have
$$\mathbb{P}_t(x)=\sum_ne^{-t|n|}e^{2\pi inx}=\sum_{n}P_t(x+n).$$ Hence, by the fact that $f(\theta+n)=f(\theta)$ for any $n\in\mathbb{Z}$, we obtain
\begin{align*}
\int_{\R}P_{t}(x)f(\theta-x)dx&=\sum_n\int_{[0,1]+n}P_t(x)f(\theta-x)dx\\
&=\sum_n\int_{[0,1]}P_t(x+n)f(\theta-x-n)dx\\
&=\int_{[0,1]}\sum_nP_t(x+n)f(\theta-x)dx\\
&=\int_{[0,1]}\mathbb{P}_t(x)f(\theta-x)dx=\mathbb{P}_t\ast f(\theta),
\end{align*}
which, together with (\ref{transference result}), implies the desired inequality (\ref{poisson T1 q variation p estimates}).
\end{proof}

Now we are at a position to prove Theorem \ref{thm: rademacher cotype q}. We follow an argument given by Pisier in Proposition 7.5 of \cite{Pis92}.
\begin{proof}
Following the remark after Theorem \ref{thm: rademacher cotype q}, by Lemma \ref{lem: poisson R1} and \ref{lem: poisson T1}, we have for any sequence $(t_i)_i$
\begin{align}\label{intermediate estimate 1}
\sum_i\left\|\|\mathbb{P}_{t_i}\ast f-\mathbb{P}_{t_{i+1}}\ast f\|\right\|^q_q\leq C^q_q\|\|f\|\|_q, \;\forall \;f\in L^q([0,1];\B).
\end{align}
It is easy to check that
$$\|({\mathbb{P}_{t_i}\ast f-\mathbb{P}_{t_{i+1}}\ast f})^\wedge(2^i)\|=|e^{-t_i2^i}-e^{-t_{i+1}2^i}|\|\hat{f}(2^i)\|.$$
On the other hand, by H\"older inequality
\begin{align*}
\|({\mathbb{P}_{t_i}\ast f-\mathbb{P}_{t_{i+1}}\ast f})^\wedge(2^i)\|\leq\left\|\|\mathbb{P}_{t_i}\ast f-\mathbb{P}_{t_{i+1}}\ast f\|\right\|_q.
\end{align*}
Take $t_i=\log(1+1/(2^i-1))$, then
$$|e^{-t_i2^i}-e^{-t_{i+1}2^i}|\rightarrow e^{-1}-e^{-2}>0.$$
Hence we must have
$$\left\|\|\mathbb{P}_{t_i}\ast f-\mathbb{P}_{t_{i+1}}\ast f\|\right\|_q\geq 2^{-1}(e^{-1}-e^{-2})\|\hat{f}(2^i)\|$$
for all $i$ sufficiently large.
Therefore there exists $i_0$ such that for all $(t_{i})_{i\geq i_0}$, (\ref{intermediate estimate 1}) yields
\begin{align}\label{intermediate estimate 2}
\left(\sum_{i\geq i_0}\|\hat{f}(2^i)\|^q\right)^{\frac{1}{q}}\leq C_q\|\|f\|\|_q.
\end{align}
The fact that (\ref{intermediate estimate 2}) implies Rademacher cotype $q$ is well known. Indeed for any fixed sequence of signs $\varepsilon_k=\pm 1$ and any sequence $x_1,\dotsm,x_n$ in $\B$ we deduce from (\ref{intermediate estimate 2}) that
$$\left(\sum_k\|x_k\|^q\right)^{1/q}\leq C_q\left\|\sum_k\varepsilon_ke^{2\pi i2^k\theta}x_k\right\|_{L^q([0,1])}.$$
Taking the $L^q$-norm of both sides with respect to $(\varepsilon_1,\dotsm,\varepsilon_n)$, by Fubini's theorem and  the contraction principle we obtain
\begin{align*}
\left(\sum_k\|x_k\|^q\right)^{1/q}&\leq C_q\left\|\left\|\sum_k\varepsilon_ke^{2\pi i2^k\theta}x_k\right\|_q\right\|_{L^q([0,1])}
\leq \left\|\sum_k\varepsilon_ke^{2\pi i2^k\theta}x_k\right\|_q.
\end{align*}
\end{proof}

\section{Ergodic Averages}
It is well known that many results in harmonic analysis on $\mathbb{Z}$ can be transferred to the ergodic theory. In this section, we will show that the vector-valued $q$-variation for the ergodic averages is also bounded on $L^p(\B)$ with $1<p<\infty$.  Let $(\Omega_1,\mu)$ and $(\Omega_2,\mu)$ be two measure spaces.  An operator $T: L^p(\Omega_1)\rightarrow L^p(\Omega_2)$ is called contractively regular if the following inequality holds
$$\left\|\sup_{k\geq1}|T(x_k)|\right\|_{L^p(\Omega_1)}\leq \left\|\sup_{k\geq1}|x_k|\right\|_{L^p(\Omega_2)}.$$
for any finite sequence $(x_k)_{k\geq1}$ in $L^p(\Omega_1)$. Any contractively regular operator $T$ can be extended to a contractive operator on the Bochner space $L^p(\Omega_1; E)$ for any Banach space $E$, i.e.
\begin{align}\label{extension of T}
\|T\otimes I_{E}: L^p(\Omega_1;E)\rightarrow L^p(\Omega_2;E)\|\leq1.
\end{align}
In this paper, any extension of any operator $S$ will be still denoted by $S$ when no confusion occurs.

Obviously, positive contractions are regular. On the other hand, it is easy to check that if $T$ is a contraction on $L^1(\Omega_1)$ and $L^{\infty}(\Omega_1)$, then $T$ is contractively regular on  $L^p(\Omega_1)$. We refer to \cite{Mey91}, \cite{Pel76} and \cite{Pis94} for more details and complements.

Let $T=(T_t)_{t>0}$ be a bounded strong continuous semigroup. For any $t>0$, the ergodic averages of $T$ is defined as
$$M_t(T)=\frac{1}{t}\int^{t}_{0}T_rdr.$$
The main result of this section is the following theorem.

\begin{theorem}\label{thm:ergodic average continuous}
Let $1<p<\infty$. Let $T=(T_t)_{t>0}$ be a strong continuous semigroup on $L^p(\Omega)$  and every $T_t$ is contractively regular. Let  $2\leq q_0<q<\infty$ and $\B$ be of martingale cotype $q_0$.  Let $\B$ be an UMD lattice having Fatou property. Then  there exist a constant $C_{p,q}$ such that
\begin{align}\label{ergodic average continuous}
\left\|\omega\rightarrow\|((M_t(T)f)(\omega))_{t\geq0}\|_{V_q(\B)}\right\|_p\leq C_{p,q}\|\|f\|\|_p.
\end{align}
\end{theorem}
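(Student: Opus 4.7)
My plan is to use Fendler's dilation theorem to lift $(T_t)_{t\geq0}$ to a strongly continuous group of positive invertible isometries $(U_r)_{r\in\R}$ on $L^p(\tilde\Omega)$ for a larger measure space $\tilde\Omega$, and then to run a vector-valued Coifman--Weiss transference to reduce the desired estimate to the variational inequality for one-sided differential operators on $\R$, which in turn is the obvious one-sided analogue of Theorem~\ref{Zd q variation} (proved by the same short/long-variation plus martingale decomposition).

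First I would apply Fendler's dilation to produce a positive isometric embedding $J\colon L^p(\Omega)\to L^p(\tilde\Omega)$, a contractive positive projection $P\colon L^p(\tilde\Omega)\to J(L^p(\Omega))$, and a strongly continuous group $(U_r)_{r\in\R}$ of positive invertible isometries on $L^p(\tilde\Omega)$ with $T_t=J^{-1}PU_tJ$ for all $t\geq0$; integrating in $r$ gives $M_t(T)=J^{-1}PM_t(U)J$ for every $t>0$. All four operators are contractively regular and, by (\ref{extension of T}), extend contractively to the $\B$-valued Bochner spaces. For a positive contractively regular operator $S$ and a $\B$-valued family $(g_k)$, the Bochner triangle inequality gives $\|Sg(\omega)\|_{\B}\leq S(\|g\|_{\B})(\omega)$ pointwise and, combined with Minkowski's inequality for $L^q$ and the monotonicity of $S$,
\[
 \|(Sg_k)_k\|_{V_q(\B)}(\omega)\leq S\bigl(\tilde\omega\mapsto\|(g_k)_k\|_{V_q(\B)}(\tilde\omega)\bigr)(\omega).
\]
Applying this inequality to the three factors appearing in the dilation formula reduces matters to proving the variational estimate for $(M_t(U))_{t>0}$ acting on $L^p(\tilde\Omega;\B)$.

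The second step is a vector-valued Coifman--Weiss transference. I fix a truncation $T>0$ and work with the variation taken over $t\in(0,T]$; the Fatou property on $\B$ will then allow passing $T\to\infty$ afterwards. For each $\tilde\omega$, the orbit $F_{\tilde\omega}(x):=U_xg(\tilde\omega)$ is a $\B$-valued function on $\R$, and one has the key identity
\[
 U_xM_t(U)g(\tilde\omega)=\frac{1}{t}\int_0^tF_{\tilde\omega}(x+r)\,dr=\tilde A_tF_{\tilde\omega}(x),
\]
where $\tilde A_t$ denotes the one-sided averaging operator on $\R$. Taking the $V_q(\B)$-norm in $t$ on both sides, integrating $|\cdot|^p$ over $x\in[0,N]$ for $N\gg T$ and over $\tilde\omega$ with Fubini, and then applying the one-sided analogue of Theorem~\ref{Zd q variation} on $L^p(\R;\B)$ together with the fact that $U_x\otimes I_{\B}$ is isometric on $L^p(\tilde\Omega;\B)$, yields a bound of the form
\[
 N\,\bigl\|\,\|(M_t(U)g)_{0<t\leq T}\|_{V_q(\B)}\bigr\|_p^p\leq C_{p,q}^p(N+2T)\,\|\,\|g\|\,\|_p^p.
\]
Letting $N\to\infty$ absorbs the boundary term, and the Fatou property then allows $T\to\infty$.

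The main obstacle I expect lies in the vector-valued extensions: pushing Fendler's scalar dilation and the Coifman--Weiss transference identity through to the sublinear $V_q(\B)$-valued operator while keeping the pointwise control needed to invoke Theorem~\ref{Zd q variation}. Concretely, one must justify the interplay between the positive operators $J$, $P$, $U_r$ and the $V_q(\B)$-valued extension, the convergence of the truncated variation to the full variation as $T\to\infty$, and the transition from the central Theorem~\ref{Zd q variation} to its one-sided variant on $\R$; the UMD-lattice plus Fatou hypothesis on $\B$ is precisely what underwrites these technicalities, while the martingale cotype $q_0$ assumption supplies the differential variational inequality that drives the whole transference.
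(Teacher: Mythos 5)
Your proposal is correct and follows essentially the same route as the paper: Fendler's dilation to a group of regular isometries, contractive regularity to push everything through the $\B$-valued $V_q$ setting, and then transference of the truncated variation to the variational inequality for averaging operators on $\R$ (Theorem \ref{Zd q variation}), with the auxiliary averaging parameter ($N$, or $b$ in the paper) absorbing the boundary factor before letting the truncation tend to infinity. The only cosmetic difference is that you phrase the reduction via the one-sided averages and the Coifman--Weiss identity explicitly, whereas the paper applies Theorem \ref{Zd q variation} directly to the orbit function $g(s,\hat\omega)=\chi_{(0,a+b)}(s)U^sJ(f)(\hat\omega)$, which amounts to the same estimate.
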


This proof is based on the transference principle, together with Theorem \ref{Zd q variation}. 

\begin{proof}
By Fendler's dilation theorem \cite{Fen97}, there exist another measure space $(\hat{\Omega},\hat{\mu})$, a strongly continuous group $U={U_t}_{t\in\mathbb{R}}$ of regular isometries on $Lp(\hat{\Omega})$, a positive isometric embedding $Q$ from $L_p(\Omega)$ into $L_p(\hat{\Omega})$ and a regular projection $J$ from $L_p(\hat{\Omega})$ onto $L_p(\Omega)$ such that
$T_t=QU_tJ$, for any $t>0$.

Given $f\in L^p(\Omega;\B)$. 
It is easy to check that $$\|(M_t(T)f)_{t>0}\|_{L^p(\B(V_q))}=\lim_{a\rightarrow\infty }\|(M_t(T)f)_{0<t<a}\|_{L^p(\B(V_q))}.$$
Now fix an integer $a>0$. For any $t>0$, we clearly have
$$M_t(T)=QM_t(U)J.$$
Since $\|Q\|_r\leq1$, it follows from (\ref{extension of T}) that
\begin{align}\label{intermediate estimate}
\|(M_t(T)f)_{0<t<a}\|_{L^p(\Omega;V_q(\B))}\leq\|(M_t(U)J(f))_{0<t<a}\|_{L^p(\hat{\Omega};V_q(\B))}.
\end{align}
Using the regularity of $U$, we have 
\begin{align*}
\|(M_t(U)J(f))_{0<t<a}\|_{L^p(\hat{\Omega};V_q(\B))}&=\|(M_t(U)U^{-s}U^{s}J(f))_{0<t<a}\|_{L^p(\hat{\Omega};V_q(\B))}\\
&\leq\|(M_t(U)U^{s}J(f))_{0<t<a}\|_{L^p(\hat{\Omega};V_q(\B))}
\end{align*}
for any $s>0$. 
Hence for any $b>0$, we have
\begin{align*}
&\|(M_t(U)J(f))_{0<t<a}\|^p_{L^p(\hat{\Omega};V_q(\B))}\\
&\leq \frac{1}{b}\int^b_{0}\|(M_t(U)U^{s}J(f))_{0<t<a}\|^p_{L^p(\hat{\Omega};V_q(\B))}\\
&=\frac{1}{b}\int_{\hat{\Omega}}\int^b_{0}\|(M_t(U)U^{s}J(f))_{0<t<a}\|^p_{V_q(\B))}d\hat{\omega}.
\end{align*}
Now we define a $\B$-valued function on $\mathbb{R}\times \hat{\Omega}$, $g(t,\hat{\omega})=\chi_{0, a+b}(t)U^{t}J(f)(\hat{\omega})$. Then
\begin{align*}
M_t(U)U^{s}J(f)(\hat{\omega})&=\frac{1}{t}\int^{t}_{0}U^{r+s}J(f)(\hat{\omega})=A_t(g(\cdot,\hat{\omega}))(s).
\end{align*}
Now apply Theorem \ref{Zd q variation},  using the regularity of $U$ and $J$, we have
\begin{align*}
&\|(M_t(U)J(f))_{0<t<a}\|^p_{L^p(\hat{\Omega};V_q(\B))}\leq C^p_{p,q}\frac{1}{b}\int_{\hat{\Omega}}\int^{a+b}_{0}\|g(s,\hat{\omega})\|_{\B}^p d\hat{\omega}\\
&\leq \leq C^p_{p,q}\frac{1}{b}\int_{\hat{\Omega}}\int^{a+b}_{0}\|J(f)(\hat{\omega})\|_{\B}^p d\hat{\omega}\leq\frac{a+b}{b}C^p_{p,q}\|f\|^p_{L^p(\B)}.
\end{align*}
\end{proof}

A discret version of Theorem \ref{thm:ergodic average continuous} is also true by the same transference techniques together with Corollary \ref{cor:Z1 q variation}.  Let $T: L^p(\Omega)\rightarrow L^p(\Omega)$ be a contractively regular operator. For any integer $n\geq0$, the ergodic averages of $T$ is defined as
$$M_n(T)=\frac{1}{n+1}\sum^{n}_{k=0}T^k.$$

\begin{corollary}\label{cor:ergodic average discret}
Let $1<p<\infty$ and $2\leq q_0<q<\infty$. Let $\B$ be of martingale cotype $q_0$. Then  there exist a constant $C_{p,q}$ such that
\begin{align}\label{ergodic average discrete}
\|(M_n(T)f)_{n\geq0}\|_{L^p(v_q(\B))}\|\leq C_{p,q}\|\|f\|\|_{p},\;\forall f\in L^p(\Omega;\B).
\end{align}
\end{corollary}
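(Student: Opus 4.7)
The plan is to mimic the proof of Theorem \ref{thm:ergodic average continuous} with the continuous dilation and continuous variational inequality replaced by their discrete counterparts, namely the discrete analogue of Fendler's theorem for a single contractively regular operator and Corollary \ref{cor:Z1 q variation}. First I would invoke this dilation: since $T$ is contractively regular on $L^p(\Omega)$, there exist a measure space $(\hat\Omega,\hat\mu)$, an invertible regular isometry $U$ on $L^p(\hat\Omega)$, a positive isometric embedding $Q\colon L^p(\Omega)\to L^p(\hat\Omega)$, and a regular projection $J\colon L^p(\hat\Omega)\to L^p(\Omega)$ with $T^n=JU^nQ$ for every $n\ge 0$. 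Averaging in $n$ yields $M_n(T)=J\,M_n(U)\,Q$. Because $J$ is regular with regular norm $1$, its canonical extension to $L^p(\hat\Omega;v_q(\B))\to L^p(\Omega;v_q(\B))$ has norm $\le 1$, which gives
\[
\|(M_n(T)f)_{n\ge0}\|_{L^p(\Omega;v_q(\B))}\le \|(M_n(U)Qf)_{n\ge0}\|_{L^p(\hat\Omega;v_q(\B))}.
\]

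Next I would truncate to $0\le n\le a$ and use the shift-averaging trick. Since $U$ is an invertible regular isometry and commutes with itself, for every integer $j\ge 0$,
\[
\|(M_n(U)Qf)_{0\le n\le a}\|_{L^p(v_q(\B))} = \|(M_n(U)U^jQf)_{0\le n\le a}\|_{L^p(v_q(\B))};
\]
averaging the $p$-th power of the right-hand side over $j=0,1,\dots,N$ therefore does not alter the left-hand side. Introducing
\[
g(k,\hat\omega)=\mathbf{1}_{\{0\le k\le a+N\}}U^kQf(\hat\omega),\qquad k\in\Z,
\]
one verifies that for $0\le j\le N$ and $0\le n\le a$,
\[
M_n(U)U^jQf(\hat\omega)=\frac{1}{n+1}\sum_{k=0}^n U^{k+j}Qf(\hat\omega)=\tilde A_n g(\cdot,\hat\omega)(j),
\]
where $\tilde A_n h(j)=\frac{1}{n+1}\sum_{k=0}^n h(j+k)$. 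By a trivial change of variable the forward averages $\tilde A_n$ satisfy exactly the same $q$-variational inequality as $A_n$ in Corollary \ref{cor:Z1 q variation}.

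Applying that corollary pointwise in $\hat\omega$ to $g(\cdot,\hat\omega)\in\ell^p(\Z;\B)$, integrating in $\hat\omega$, and using that $U$ is an isometry and $Q$ is an isometric embedding, one obtains
\[
\frac{1}{N+1}\sum_{j=0}^{N}\|(M_n(U)U^jQf)_{0\le n\le a}\|_{L^p(v_q(\B))}^p \le \frac{a+N+1}{N+1}\,C_{p,q}^p\,\|\|f\|\|_{p}^{\,p}.
\]
Letting $N\to\infty$ and then $a\to\infty$ (the latter justified by monotone convergence, since the $v_q(\B)$-variation of a truncated sequence increases with the truncation parameter) completes the proof. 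The main obstacle is the very first step: ensuring that the discrete Fendler-type dilation really does apply to an arbitrary contractively regular operator $T$ (and not just to a positive contraction, where Akcoglu's theorem is standard) in such a way that $Q$ and $J$ inherit contractive extensions to $v_q(\B)$-valued Bochner spaces. All the remaining manipulations then faithfully mirror the continuous argument already given.
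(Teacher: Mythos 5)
Your argument is correct and is essentially the approach the paper intends: the paper disposes of this corollary in one line ("the same transference techniques together with Corollary \ref{cor:Z1 q variation}"), i.e.\ a discrete dilation, the shift-averaging transference trick, and the $q$-variational inequality for averages on $\Z$, which is exactly what you carry out. The dilation step you flag as the main obstacle is the standard discrete analogue (Akcoglu--Sucheston for positive contractions, extended to contractively regular operators, cf.\ \cite{Pel76}), with $Q$ and $J$ regular so that their Bochner-space extensions are contractive, so it poses no genuine difficulty.
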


\section{Symmetric diffusion semigroups}

The aim of this section is to show that the result in Theorem \ref{thm:ergodic average continuous} is still true for the symmetric semigroups themselves instead of the ergodic averages. We need the following lemma, which is a vector-valued version of Lemma 2.2 in \cite{LeXu2}. We omit its proof.

\begin{lemma}\label{lem: approximation properties 2}
Let $(f_t)_{t\geq0}$ be a family of $L^p(\Omega; E)$ and assume that:
\begin{enumerate}[\rm(i)]
\item For a.e. $\omega\in\Omega$, the function $t\rightarrow f_t(\omega)$ is continuous in $E$ on $(0,\infty)$;
\item There exists a constant $C>0$ such that whenever $t_0<t_1<\dotsm<t_m$ is a finite increasing sequence of positive real numbers, we have
$$\|(f_{t_0},f_{t_1},\dotsm,f_{t_m})\|_{L^p(\Omega; v^m_q(E))}\leq C.$$
\end{enumerate}
Then $(f_t(\omega))_{t>0}$ belongs to $V_q(E)$ for a.e. $\omega\in\Omega$, and the resulting function belongs to $L^p(\Omega)$, i.e.
$$\left\|\omega\rightarrow\|(f_t(\omega))_{t>0}\|_{V_q(E)}\right\|_{p}\leq C.$$
\end{lemma}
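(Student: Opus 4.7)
The plan is to reduce the continuous supremum defining $V_q(E)$ to a countable supremum over rational partitions (using hypothesis (i)), and then bound this countable supremum via a monotone convergence argument fed by hypothesis (ii).

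First, I would fix $\omega$ in the full-measure set where $t\mapsto f_t(\omega)$ is $E$-continuous on $(0,\infty)$, and prove the pointwise identity
$$\|(f_t(\omega))_{t>0}\|_{V_q(E)}=\sup_{j\geq1}\|(f_t(\omega))_{t\in S_j}\|_{v_q(E)},$$
where $\{S_j\}_{j\geq1}$ is a fixed enumeration of the strictly increasing finite tuples in $\mathbb{Q}\cap(0,\infty)$. The inequality $\geq$ is trivial. For $\leq$, given any finite increasing tuple $0<t_0<\cdots<t_m$ and any $\varepsilon>0$, continuity lets me choose rationals $r_0<\cdots<r_m$ that lie within $\min_k(t_{k+1}-t_k)/2$ of the $t_k$ and simultaneously satisfy $\|f_{r_k}(\omega)-f_{t_k}(\omega)\|<\varepsilon$. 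Since the functional $(x_0,\ldots,x_m)\mapsto(\|x_0\|^q+\sum_k\|x_k-x_{k-1}\|^q)^{1/q}$ is jointly continuous, the discrete $q$-variation along $(r_k)$ approximates that along $(t_k)$ to order $\varepsilon$, and $(r_k)$ coincides with some $S_j$. In particular, this identity also exhibits $\omega\mapsto\|(f_t(\omega))_{t>0}\|_{V_q(E)}$ as the countable supremum of measurable functions, hence it is measurable.

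Next, I would apply a monotone convergence argument. Set $F_n(\omega)=\max_{1\leq j\leq n}\|(f_t(\omega))_{t\in S_j}\|_{v_q(E)}$, which is measurable, and let $T_n$ be the strictly increasing rational tuple obtained by sorting $S_1\cup\cdots\cup S_n$. Because the discrete $v_q$-quasinorm is itself a supremum over increasing subsequences of the index set, the inclusion $S_j\subset T_n$ yields $\|(f_t)_{t\in S_j}\|_{v_q(E)}\leq\|(f_t)_{t\in T_n}\|_{v_q(E)}$ pointwise, whence $F_n(\omega)\leq\|(f_t(\omega))_{t\in T_n}\|_{v_q^{|T_n|}(E)}$. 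Hypothesis (ii) applied to the single tuple $T_n$ then gives $\|F_n\|_p\leq C$ for every $n$, and since $F_n\uparrow\|(f_t(\cdot))_{t>0}\|_{V_q(E)}$ by the first step, the monotone convergence theorem yields the desired $L^p$ bound; that bound automatically forces $(f_t(\omega))_{t>0}\in V_q(E)$ for a.e.\ $\omega$.

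The main obstacle is the first step, specifically the simultaneous rational approximation under the competing constraints that the perturbation in $E$-norm be small \emph{and} that strict ordering be preserved; both must be arranged with a single choice of $r_k$'s. Once that is done, everything else is soft: monotonicity of $v_q$ under refinement is built into its defining supremum, and the passage from the finite discrete bounds of (ii) to the continuous $V_q$-bound is routine Fatou-type bookkeeping.
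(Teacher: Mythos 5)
Your argument is correct, and it is essentially the proof the paper leaves out: the paper omits the proof of this lemma, pointing to the scalar-valued Lemma 2.2 of \cite{LeXu2}, whose argument is exactly your scheme of discretizing to finite rational tuples via the a.e.\ continuity of $t\mapsto f_t(\omega)$ (which also gives measurability of the $V_q$-norm) and then passing to the limit by monotone convergence using hypothesis (ii). The only point worth making explicit is the trivial reduction of the supremum over infinite increasing sequences in the definition of $\|\cdot\|_{V_q(E)}$ to a supremum over finite tuples (each infinite sum is the supremum of its partial sums), after which your two steps go through verbatim in the vector-valued setting.
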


The main result is stated as follows.
\begin{theorem}\label{thm: analytic semigroup}
Let $(T_t)_{t\geq0}$ be a symmetric diffusion semigroup. Let $2\leq q_0<\infty$ and $\B$ be a Banach space of martingale cotype $q_0$ which is an interpolation space between a Hilbert space and another Banach space $B_0$ of martingale cotype $q_1$ with $2\leq q_1<\infty$. Then for any $q_0< q<\infty$, any integer $m\geq0$ and any $f\in L^p(\Omega;\B)$,  the family $\big(t^m\frac{\partial^m}{\partial t^m}(T_t(f))\big)_{t>0}$ belongs to $V_q(\B)$ for a.e. $\omega\in\Omega$ and we have an estimate
\begin{align}\label{analytic semigroup continuous}
\left\|\|\omega\rightarrow\big(t^m\frac{\partial^m}{\partial t^m}(T_t(f))\big)_{t>0}\|_{V_q(\B)}\right\|_{p}\lesssim \|\|f\|\|_p,\;\forall f\in L^p(\Omega;\B).
\end{align}
In particular, when $m=0$, the family $(T_t(f))_{t>0}$ belongs to $V_q(\B)$ for a.e. $\omega\in\Omega$ and we have an estimate
\begin{align*}
\left\|\|\omega\rightarrow (T_t(f))_{t>0}\|_{V_q(\B)}\right\|_{p}\lesssim \|\|f\|\|_p,\;\forall f\in L^p(\Omega;\B).
\end{align*}
\end{theorem}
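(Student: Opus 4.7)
The approach is to follow the strategy of Le Merdy--Xu, adapted to the vector-valued setting via the two main preparatory results of this paper: the ergodic-average estimate (Corollary~\ref{cor:ergodic average discret}) and the vector-valued Littlewood--Paley inequality~(\ref{littlewood-palye0}). Throughout, each $T_t$ is contractively regular on $L^p(\Omega)$, being positive and an $L^1$--$L^\infty$ contraction. I focus on the case $m=0$; the general $m\ge 1$ case is handled at the end by the same scheme applied to the analytic family $(t\partial_t)^m T_t$.

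First, by Lemma~\ref{lem: approximation properties 2} (whose hypothesis~(i) follows from strong continuity of $(T_t)$ on $L^p(\Omega;\B)$), it suffices to produce a constant $C$, independent of the finite increasing sequence $0<t_0<\cdots<t_N$, such that $\|(T_{t_k}f)_{k=0}^N\|_{L^p(\Omega;v_q^N(\B))}\le C\|\|f\|\|_p$. Given such a sequence, fix a small $\delta>0$, set $n_k=\lfloor t_k/\delta\rfloor$, and use strong continuity together with the identity $T_{t_k}=T_{n_k\delta}T_{t_k-n_k\delta}$ to obtain $T_{n_k\delta}f\to T_{t_k}f$ in $L^p(\Omega;\B)$ as $\delta\to 0$. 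Since the $v_q^N(\B)$-norm is lower semicontinuous in its entries, it is enough to establish
\[
\|(T_\delta^{n_k}f)_{k=0}^N\|_{L^p(\Omega;v_q^N(\B))}\le C\|\|f\|\|_p
\]
with $C$ independent of $\delta$ and $(n_k)$.

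Next I would split $T_\delta^n=M_n(T_\delta)+R_n$, where $M_n(T_\delta)=\frac1{n+1}\sum_{k=0}^nT_\delta^k$ and, by Abel summation, $R_n f=\frac1{n+1}\sum_{j=1}^n j\,T_\delta^{j-1}(T_\delta-I)f$, equivalently $(n+1)(R_n-R_{n-1})=n\,T_\delta^{n-1}(T_\delta-I)f-R_{n-1}$. By the triangle inequality for $v_q(\B)$ the problem splits into two pieces. The $M_n(T_\delta)$-piece is a direct consequence of Corollary~\ref{cor:ergodic average discret} applied to the contractively regular operator $T_\delta$. For the $R_n$-piece, using $q>q_0$ I pass from $v_q$ to the stronger $v_{q_0}$; combining the above recursion with a dyadic long/short decomposition of the variation in the spirit of~(\ref{vqm by svq0m lvq0m vqe}) together with a weighted H\"older step in the summation index, the control of $\|(R_{n_k}f)_k\|_{L^p(v_{q_0}(\B))}$ is reduced to the weighted square-function bound
\[
\Big\|\Big(\sum_{n\ge 0}\tfrac{1}{n+1}\|(n+1)T_\delta^n(T_\delta-I)f\|_\B^{q_0}\Big)^{1/q_0}\Big\|_p\lesssim \|\|f\|\|_p,
\]
which is precisely the Littlewood--Paley inequality~(\ref{littlewood-palye0}).

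The proof of~(\ref{littlewood-palye0}) is the decisive step where the interpolation hypothesis on $\B$ is used, and this is the hardest part. For $\B$ a Hilbert space the estimate is standard: Rota's dilation theorem represents the symmetric $T_\delta$ as a conditional expectation on a larger probability space, and the inequality follows by orthogonality. For $B_0$ of martingale cotype $q_1$, the same Rota representation combined with the Pisier--Xu square-function inequality~(\ref{martingale cotype q0}) yields~(\ref{littlewood-palye0}) with exponent $q_1$. Complex interpolation between these two endpoints delivers~(\ref{littlewood-palye0}) with the intermediate exponent $q_0$ for $\B$; this is the main obstacle of the proof and is precisely the reason why arbitrary martingale cotype $q_0$ spaces cannot at present be treated. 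Finally, the case $m\ge 1$ follows by running the same three steps on $(t\partial_t)^mT_t$ in place of $T_t$: analyticity of $(T_t)$ on $L^p(\Omega;\B)$ keeps these operators uniformly bounded, and both Corollary~\ref{cor:ergodic average discret} and~(\ref{littlewood-palye0}) admit the requisite weighted analogues for this derived family.
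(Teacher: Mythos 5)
Your overall architecture --- discretize via Lemma \ref{lem: approximation properties 2}, control the variation of $T_\delta^{n}$ by the ergodic averages plus a weighted square function, and get that square-function bound for $\B$ by interpolation --- matches the paper's (Proposition \ref{pro: analytic semigroup} plus the interpolation step in the proof of Theorem \ref{thm: analytic semigroup}), and your $R_n$-reduction is in the spirit of the Le Merdy--Xu decomposition the paper uses. The genuine gap is at the decisive step, the two ``endpoint'' Littlewood--Paley estimates. You claim that for $B_0$ of martingale cotype $q_1$, Rota's dilation combined with the Pisier--Xu inequality (\ref{martingale cotype q0}) yields (\ref{littlewood-palye0}) with exponent $q_1$. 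It does not: under Rota's dilation the differences $T_\delta^{2n}(T_\delta^{2}-I)f$ become a conditional expectation applied to (reversed) martingale differences, and martingale cotype gives only the unweighted bound $\sum_n\|d_n\|^{q_1}\lesssim\|f\|^{q_1}$, whereas (\ref{littlewood-palye0}) carries the weight $(n+1)^{q_1-1}$. If your claim were correct one could take $B_0=\B$ and $q_1=q_0$ and obtain (\ref{littlewood-palye0}) for every space of martingale cotype $q_0$, which the paper explicitly states is open and which would render the interpolation hypothesis on $\B$ superfluous. The same objection hits your Hilbert-space endpoint for $p\neq 2$: ``Rota plus orthogonality'' again only gives the unweighted bound; at $p=2$ one uses spectral theory, and passing to general $p$ already requires an interpolation.

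What Rota's dilation and martingale cotype actually give for $B_0$ (Lemma 2.4 of \cite{MTX06}) is the square-function estimate for differences of \emph{ergodic averages}, i.e.\ the $\alpha=1$ member of the analytic family of fractional averages $M_n^{\alpha}$. The paper therefore runs Stein's complex interpolation in the parameter $\alpha$: estimate (\ref{estimate1}) for $(B_0,q_1)$ when $\mathrm{Re}(\alpha)>1$, estimate (\ref{estimate2}) for $(H,2)$ for all $\alpha$ (itself obtained by interpolating the spectral bound (\ref{estimate21}) at $p=2$ with the Rota bound (\ref{estimate22}) for $\mathrm{Re}(\alpha)>1$), and only then lands at $\alpha=-m$, which is exactly (\ref{littlewood-paley}) for $(\B,q_0)$. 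A direct interpolation ``between the two endpoints'' as you propose has nothing available at $\alpha=0$ (or $\alpha=-m$) to interpolate. Relatedly, your treatment of $m\geq 1$ by rerunning the scheme on $(t\partial_t)^m T_t$ is not how the paper proceeds and is not justified as stated: general $m$ is handled inside the discrete Proposition \ref{pro: analytic semigroup} by induction on $m$ via the decomposition formula, which requires (\ref{littlewood-paley}) at order $m+1$, again supplied precisely by the fractional-average interpolation.
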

\begin{remark}
When $\B$ is a Banach lattice of martingale cotype $q_0$, from the argument after Problem 2 in \cite{MTX06}, there exists another Banach lattice $B_0$ of martingale cotype $q_1$ with $q_1\geq2$. On the other hand, noncommutative $L_{q_0}$ spaces are also Banach spaces satisfying the interpolation property.
\end{remark}
As said previously, we  first prove a discrete version, then the continuous case will be deduced from the discrete case by approximation. Let $T: L^p(\Omega)\rightarrow L^p(\Omega)$ and let
$$\Delta^m_n\equiv\Delta^m_n(T)=T^n(T-I)^m$$
for any integers $n,m\geq0$. Note that $(\Delta^m_n)_{n\geq0}$ is the $m$-difference sequence of $(T^n)_{n\geq0}$.

\begin{proposition}\label{pro: analytic semigroup}
Let $2\leq q_0<\infty$ and $\B$ be a Banach space of martingale cotype $q_0$. Assume T is a contractively regular operator on $L^p(\Omega)$ such that for any integer $m\geq0$ and any $f\in L^p(\Omega;\B)$,
\begin{align}\label{littlewood-paley}
\left\|\left(\sum^{\infty}_{n=0}\frac{1}{n+1}\|(n+1)^{m+1}\Delta^{m+1}_n(f)\|^{q_0}\right)^{\frac{1}{q_0}}\right\|_{p}\leq C_{m,q_0}\|\|f\|\|_p.
\end{align}
 Then we have
\begin{align}\label{analytic semigroup}
\|(n^m\Delta^m_n(f))_{n\geq1}\|_{L^p(v_q(\B))}\lesssim \|\|f\|\|_p,\;\forall f\in L^p(\Omega;\B).
\end{align}
 for any integer $m\geq0$.
In particular, when $m=0$, we get the estimate for the discrete semigroup
$$\|(T^n(f))_{n\geq1}\|_{L^p(v_q(\B))}\lesssim \|\|f\|\|_p,\;\forall f\in L^p(\Omega;\B).$$
\end{proposition}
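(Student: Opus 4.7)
The plan is to adapt the short/long variation decomposition from the proof of Theorem~\ref{Zd q variation}: split the $v_q(\B)$-variation of $(n^m\Delta^m_n f)_n$ into a long variation over the dyadic times $n=2^k$ and, for each $k$, a short variation taken over partitions of $[2^k,2^{k+1})$. Since $q\geq q_0$, both pieces are pointwise dominated by their $q_0$-analogues, so it suffices to bound $\sum_k \|a_{2^{k+1}}-a_{2^k}\|^{q_0}$ and $\sum_k SV_k^{q_0}$ in $L^{p/q_0}$. The proof will proceed by induction on~$m$.

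In the base case $m=0$, set $a_n=T^n f$, so that consecutive increments are $a_{l+1}-a_l=\Delta^1_l f$. For any partition $(n_i)_i$ contained in $[2^k,2^{k+1})$, H\"older with exponents $(q_0,q_0/(q_0-1))$ together with $n_{i+1}-n_i\leq 2^k$ gives
\[
\sum_i\|a_{n_{i+1}}-a_{n_i}\|^{q_0}\leq 2^{k(q_0-1)}\sum_{l\in[2^k,2^{k+1})}\|\Delta^1_l f\|^{q_0}.
\]
Summing over $k$ and using $(l+1)\asymp 2^k$ on the inner range collapses both the short-variation and long-variation sums to $\sum_l \frac{\|(l+1)\Delta^1_l f\|^{q_0}}{l+1}$, whose $L^p$-norm is precisely what hypothesis (\ref{littlewood-paley}) bounds at $m=0$. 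The free-parameter term $\sup_n\|T^n f\|$ arising in the definition of $v_q$ is handled via a Stein-type maximal inequality, which in this discrete contractively regular setting should follow from the LP hypothesis or from the decomposition $T^n = M_n(T) + (T^n-M_n(T))$, the first piece being controlled via Corollary~\ref{cor:ergodic average discret} and the second via the identity $(n+1)(T^n-M_n(T))f=\sum_{j=0}^{n-1}(j+1)\Delta^1_j f$.

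For the inductive step $m\geq 1$, let $a_n = n^m\Delta^m_n f$ and use the product-type identity
\[
a_{l+1}-a_l=(l+1)^m\Delta^{m+1}_l f+\bigl[(l+1)^m-l^m\bigr]\Delta^m_l f.
\]
The first summand is LP-controlled at level $m$ by the same H\"older-and-telescoping calculation used in the base case. The second summand is of order $m\,l^{m-1}\Delta^m_l f=\frac{m}{l}\,a_l$, that is, a rescaled copy of the level-$(m-1)$ quantity, and I expect its contribution to the $v_q$-variation to reduce, via summation-by-parts and a weight-shift, either to the induction hypothesis or to an application of LP at level $m-1$. The main obstacle is precisely handling this lower-order term cleanly: it involves $\Delta^m$ rather than $\Delta^{m+1}$ and so falls outside the direct reach of LP at level $m$. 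A secondary but persistent difficulty, at each level of the induction, is supplying the maximal inequality needed to absorb the initial term in the $v_q$-norm; once this is in place, the short/long variation framework should close the argument in the same pattern as Theorem~\ref{Zd q variation}.
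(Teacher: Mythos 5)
Your route (dyadic short/long block decomposition plus H\"older within blocks, as in Theorem \ref{Zd q variation}) is viable and genuinely different from the paper's, and your base case $m=0$ is essentially complete, including the maximal term via $T^n=M_n(T)+(T^n-M_n(T))$ and Corollary \ref{cor:ergodic average discret}. But as written the inductive step has two gaps, which you yourself flag. The first one is not actually an obstacle: hypothesis (\ref{littlewood-paley}) is assumed for \emph{every} $m\geq 0$, and since $(l+1)^m-l^m\leq m(l+1)^{m-1}$, the same block-H\"older computation you use for $(l+1)^m\Delta^{m+1}_lf$ gives
$$\Bigl(\sum_{l\in[2^k,2^{k+1})}(l+1)^{m-1}\|\Delta^m_lf\|\Bigr)^{q_0}\leq\Bigl(\sum_{l\in[2^k,2^{k+1})}\tfrac1{l+1}\Bigr)^{q_0-1}\sum_{l\in[2^k,2^{k+1})}\tfrac1{l+1}\|(l+1)^m\Delta^m_lf\|^{q_0},$$
and the first factor is bounded uniformly in $k$; summing over $k$ controls the lower-order increments directly by the level-$(m-1)$ Littlewood--Paley function, with no induction and no summation by parts. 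The genuinely missing piece is the initial/maximal term $\sup_n\|n^m\Delta^m_nf\|$ for $m\geq1$: your $m=0$ identity has no obvious analogue there, and telescoping increments across dyadic blocks does not produce it, because the block contributions are only $\ell^{q_0}$-summable, not $\ell^1$-summable. Until that is supplied, the $v_q$-norm (whose definition contains $\|a_{n_0}\|$ for an arbitrary starting index) is not controlled.

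The paper proceeds differently and this is precisely where its structure pays off: it uses the Le Merdy--Xu decomposition formula $n^m\Delta^m_{2n+1}=A_n-\frac{n+1}{n}B_n+n^{m-1}\Delta^{m-1}_{2n+1}-n^{m-1}\Delta^{m-1}_{n+1}$ from \cite{LeXu2} and runs an induction on $m$ in which the last two terms are genuine level-$(m-1)$ quantities (at $m=0$ they are ergodic averages, covered by Corollary \ref{cor:ergodic average discret} together with Lemma \ref{lem: elementary estimate 2} for the scalar factors), so the full $v_q$-norm, maximal part included, is inherited at each level for free. The main terms $A_n,B_n$ are then estimated pointwise in variation norm by the Littlewood--Paley function of level $m$, over an \emph{arbitrary} increasing sequence $(n_k)$, via the splitting $A_{n_k}-A_{n_{k-1}}=a_k+b_k+c_k$, H\"older with Lemma \ref{lem: elementary estimate 1}, and the counting of the overlap quantities $\Lambda_j$; this subsequence analysis replaces your dyadic reduction. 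If you either adopt that decomposition formula for the maximal term alone, or prove a separate maximal inequality for $(n^m\Delta^m_nf)_n$, your block argument closes and yields a somewhat more elementary proof of the increment estimates; without it, the proposal does not yet prove Proposition \ref{pro: analytic semigroup}.
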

This proposition will be shown later. Let us first prove Theorem \ref{thm: analytic semigroup}, which requires  the fractional averages on the power of $T$ (see Page 409 in \cite{JuXu07}). Given a complex number $\alpha$ and a nonnegative integer n, set
$$A^{\alpha}_n=\frac{(\alpha+1)(\alpha+2)\dotsm(\alpha+n)}{n!}$$
and
$$S^{\alpha}_n\equiv S^{\alpha}_n(T)=\sum^n_{k=0}A^{\alpha-1}_{n-k}T^k,\;M^{\alpha}_n\equiv M_n^{\alpha}(T)=(n+1)^{-\alpha}S^{\alpha}_n.$$
The $(M^{\alpha}_n)_{n\geq0}$ are the fractional averages of $(T^n)_{n\geq0}$. Note that $M^0_n=T^n$ and $M^1_n$ is the usual ergodic average $M_n$ already considered before. Also if $\alpha$ is a negative integer $-m$, then  $S^{-m}_n=\Delta^m_n$. Thus $M^{-m}_n=(n+1)^m\Delta^m_n$.

Now we are ready to show Theorem \ref{thm: analytic semigroup}.
\begin{proof}
Let $m\geq0$ be an integer. By the comments after (\ref{extension of T}), we have $T_t$ extends contractively on all $L^p(\Omega;\B)$ with $1\leq p\leq\infty$ for any Banach spaces and any $t>0$. Then by the density argument, it follows from the lemma in Page 72 of \cite{Ste70} that the function
$$t\rightarrow t^m\frac{\partial^m}{\partial t^m}(T_t(f))(\omega)$$
is continuous in $\B$ for a.e. $\omega\in\Omega$.  To finish the proof, it suffices to show that  $T_t$ satisfy the estimate (\ref{analytic semigroup}) uniformly. Indeed, then using an approximation argument as in the proof of Corollary 4.2 in \cite{LeXu2}, we deduce that for any $0<t_0<t_1<\dotsm<t_m$, we have
$$\|(T_{t_0}(f), T_{t_1}(f),\dotsm, T_{t_m}(f))\|_{L^p(v^m_q(\B))}\leq C\|\|f\|\|_p.$$
The desired result then follows from Lemma \ref{lem: approximation properties 2}.

Following  from Proposition \ref{pro: analytic semigroup}, it suffices to prove  that  $T_t$ satisfy the estimate (\ref{littlewood-paley}) uniformly which will be deduced by complex interpolation from the following two estimates
\begin{align}\label{estimate1}
\left\|\left(\sum^{\infty}_{n=0}\frac{1}{n+1}\|(n+1)\Delta_nM^{\alpha}_n(f)\|_{B_0}^{q_1}\right)^{\frac{1}{q_1}}\right\|_{p}\leq C_{m,q_1}\|\|f\|\|_p
\end{align}
for any $\alpha\in\mathbb{C}$ with $Re(\alpha)>1$ and $f\in L^p(\Omega;B_0)$, and
\begin{align}\label{estimate2}
\left\|\left(\sum^{\infty}_{n=0}\frac{1}{n+1}\|(n+1)\Delta_nM^{\alpha}_n(f)\|_{H}^{2}\right)^{\frac{1}{2}}\right\|_{p}\leq C_{m}\|\|f\|\|_p
\end{align}
for any $\alpha\in\mathbb{C}$ and $f\in L^p(\Omega;H)$.

As in \cite{Ste70}, the estimate (\ref{estimate1}) follows from the same inequality in the case $\alpha=1$, which is a further consequence of Lemma 2.4 in \cite{MTX06}, since $T_t$ is the square of $T_{t/2}$ and hence admits Rota's dilation. And the estimate (\ref{estimate2}) can be deduced again by complex interpolation from the following two estimates
\begin{align}\label{estimate21}
\left\|\left(\sum^{\infty}_{n=0}\frac{1}{n+1}\|(n+1)\Delta_nM^{\alpha}_n(f)\|_{H}^{2}\right)^{\frac{1}{2}}\right\|_{2}\leq C_{m}\|\|f\|\|_2
\end{align}
for any $\alpha\in\mathbb{C}$ and $f\in L^p(\Omega;H)$, and
\begin{align}\label{estimate22}
\left\|\left(\sum^{\infty}_{n=0}\frac{1}{n+1}\|(n+1)\Delta_nM^{\alpha}_n(f)\|_{H}^{2}\right)^{\frac{1}{2}}\right\|_{p}\leq C_{m}\|\|f\|\|_p
\end{align}
for any $\alpha\in\mathbb{C}$ with $Re(\alpha)>1$ and $f\in L^p(\Omega;H)$ for all $1<p<\infty$.

Finally, estimate (\ref{estimate21}) follows from spectral decomposition as in \cite{Ste70} and estimate (\ref{estimate22}) follows again from Rota's dilation since $T_t$ is the square of $T_{t/2}$.
\end{proof}

Now we are at a position to prove Proposition \ref{pro: analytic semigroup} following the pattern set up in \cite{LeXu2}, we need the following two elementary estimates whose proof is left for readers.

\begin{lemma}\label{lem: elementary estimate 1}
For any integer $m\geq0$ and $2\leq q_0<\infty$, there exists a constant $K_{m,q_0}$ such that for any $n\geq1$,
$$\left(\sum^{2n}_{j=n}(j+1)^{\frac{1-q_0m}{q_0-1}}\right)^{\frac{q_0-1}{q_0}}\leq K_{m,q_0} n^{1-m}.$$
\end{lemma}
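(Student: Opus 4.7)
The plan is straightforward since this is a purely elementary calculation. First I would set $\alpha = \frac{1-q_0 m}{q_0-1}$ for notational convenience, so the claim becomes
\[
\Bigl(\sum_{j=n}^{2n}(j+1)^{\alpha}\Bigr)^{(q_0-1)/q_0} \le K_{m,q_0}\, n^{1-m}.
\]
The strategy is to observe that on the range $n \le j \le 2n$ all the summands are comparable to a single power of $n$, then count the number of terms, then verify that the exponents match.

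Concretely, I would first argue that for $n \le j \le 2n$ one has $n+1 \le j+1 \le 2n+1$, and hence $(j+1)^{\alpha}$ is comparable to $n^{\alpha}$ with a constant $2^{|\alpha|}$ depending only on $m$ and $q_0$; the direction of the estimate depends on the sign of $\alpha$, but in either case the constant is harmless. Since the sum has exactly $n+1$ terms, this yields
\[
\sum_{j=n}^{2n}(j+1)^{\alpha} \le C_{m,q_0}\, n \cdot n^{\alpha} = C_{m,q_0}\, n^{1+\alpha}.
\]
Raising both sides to the power $(q_0-1)/q_0$ reduces everything to a check of exponents:
\[
\frac{q_0-1}{q_0}(1+\alpha) \;=\; \frac{q_0-1}{q_0}\Bigl(1 + \frac{1-q_0 m}{q_0-1}\Bigr) \;=\; \frac{q_0-1}{q_0}\cdot\frac{q_0 - q_0 m}{q_0-1} \;=\; 1-m,
\]
which gives the desired bound with $K_{m,q_0} = C_{m,q_0}^{(q_0-1)/q_0}$.

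There is no real obstacle here; the only point requiring minor care is that the sign of $\alpha$ changes with $m$ (it is positive for $m=0$ and negative for $m\ge 1$), so one should record that the pointwise comparison $(j+1)^{\alpha} \le C n^{\alpha}$ uses the upper endpoint $j \le 2n$ when $\alpha \ge 0$ and the lower endpoint $j \ge n$ when $\alpha < 0$. Alternatively, one could replace the sum by the integral $\int_{n}^{2n+1} x^{\alpha}\,dx$ and compute directly, but the pointwise comparison is cleaner and avoids splitting cases according to whether $\alpha = -1$.
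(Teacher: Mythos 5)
Your proof is correct: the pointwise comparison of $(j+1)^{\alpha}$ with $n^{\alpha}$ on $n\le j\le 2n$ (with the sign of $\alpha$ handled at the appropriate endpoint), the count of $n+1$ terms, and the exponent identity $\frac{q_0-1}{q_0}\bigl(1+\frac{1-q_0m}{q_0-1}\bigr)=1-m$ together give exactly the stated bound, including the borderline case $\alpha=-1$ (i.e.\ $m=1$) that an integral comparison would have to treat separately. The paper explicitly leaves this lemma's proof to the reader, and your argument is the standard elementary computation that is intended.
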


\begin{lemma}\label{lem: elementary estimate 2}
For any sequences $(\delta_n)_{n\geq0}\in v_1$ and $(z_n)_{n\geq0}\in L^p(\Omega; v_q(\B))$, we have $(\delta_nz_n)_{n\geq0}\in L^p(\Omega;v_q(\B))$ and
$$\|(\delta_nz_n)_{n\geq0}\|_{L^p(v_q(\B))}\leq3\|(\delta_n)_{n\geq0}\|_{v_1}\|(z_n)_{n\geq0}\|_{L^p(v_q(\B))}.$$
\end{lemma}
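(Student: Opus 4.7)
The estimate will be derived from a pointwise bound on $\Omega$: I would first establish
\begin{equation*}
\|(\delta_n z_n(\omega))_{n\geq0}\|_{v_q(\B)} \;\leq\; 3\,\|(\delta_n)\|_{v_1}\,\|(z_n(\omega))\|_{v_q(\B)}
\end{equation*}
for almost every $\omega \in \Omega$, from which the lemma follows by taking $L^p(\Omega)$ norms. The core tool is the discrete Leibniz identity
\begin{equation*}
\delta_{n_k} z_{n_k} - \delta_{n_{k-1}} z_{n_{k-1}} = \delta_{n_k}(z_{n_k} - z_{n_{k-1}}) + (\delta_{n_k} - \delta_{n_{k-1}})\, z_{n_{k-1}},
\end{equation*}
which splits each increment of the product sequence into a $\delta$-weighted $z$-increment and a $z$-weighted $\delta$-increment. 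Applying Minkowski's inequality in $\ell^q$ bounds the tail $\bigl(\sum_{k\geq1}\|\delta_{n_k}z_{n_k}-\delta_{n_{k-1}}z_{n_{k-1}}\|^q\bigr)^{1/q}$ by the sum $I+II$ of the two corresponding $\ell^q$ sums.

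For $I$, I would factor out $\sup_k |\delta_{n_k}| \leq \|(\delta_n)\|_{v_1}$, recognising the remaining factor as $\leq \|(z_n(\omega))\|_{v_q(\B)}$. For $II$, I would factor out $\sup_k \|z_{n_{k-1}}(\omega)\| \leq \|(z_n(\omega))\|_{v_q(\B)}$, leaving $\bigl(\sum_k |\delta_{n_k}-\delta_{n_{k-1}}|^q\bigr)^{1/q}$; since $q \geq 1$ and the summands are nonnegative, the elementary comparison $\bigl(\sum a_k^q\bigr)^{1/q} \leq \sum a_k$ bounds this by $\sum_k|\delta_{n_k}-\delta_{n_{k-1}}| \leq \|(\delta_n)\|_{v_1}$. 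Both of the supremum estimates used above are obtained by restricting the defining supremum of the variation norm to singleton sequences $(n)$.

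The leading term $\|\delta_{n_0} z_{n_0}(\omega)\| \leq |\delta_{n_0}|\cdot\|z_{n_0}(\omega)\|$ is handled identically, contributing a third factor of $\|(\delta_n)\|_{v_1}\|(z_n(\omega))\|_{v_q(\B)}$. Combining the three contributions via the scalar inequality $(A^q+B^q)^{1/q}\leq A+B$ (valid for $q \geq 1$) gives the constant $3$, uniformly in the choice of $(n_k)$; taking the supremum over $(n_k)$ and then the $L^p(\Omega)$ norm finishes the argument. There is no substantial obstacle: the proof is a one-step summation by parts followed by two routine $\ell^q$-versus-$\ell^1$ comparisons, with the only bookkeeping being to ensure that the same constant $3$ survives through the supremum and the integration.
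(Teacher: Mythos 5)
Your argument is correct: the Leibniz splitting of each increment, the $\ell^q$ triangle inequality, the bounds $\sup_n|\delta_n|\leq\|(\delta_n)\|_{v_1}$ and $\sup_n\|z_n(\omega)\|\leq\|(z_n(\omega))\|_{v_q(\B)}$, and the comparison $\bigl(\sum_k a_k^q\bigr)^{1/q}\leq\sum_k a_k$ together give the pointwise bound with constant $3$, uniformly in $(n_k)$, and the $L^p$ norm then yields the lemma. The paper leaves this lemma without proof (it is the elementary estimate borrowed from Le Merdy and Xu), and your summation-by-parts argument is exactly the intended standard one, so there is nothing further to reconcile.
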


\begin{proof}
We define
$$\Delta^{-1}_n=nM_{n-1}(T)=\sum^{n-1}_{j=0}T^j,\;\forall n\geq1.$$
The following decomposition formula for any $m\geq0$, which has been shown in (4.10) in \cite{LeXu2}, plays an important role in the induction argument.
\begin{align*}\label{decomposition formula}
n^{m}\Delta^m_{2n+1}&=n^{m-1}\sum^{2n}_{j=n}(j+1)\Delta^{m+1}_j-n^{m-1}(n+1)(\Delta^m_{2n+1}-\Delta^m_n)\\
&\;\;\;\;+n^{m-1}\Delta^{m-1}_{2n+1}-n^{m-1}\Delta^{m-1}_{n+1}\\
&=A_n-\frac{n+1}{n}B_n+n^{m-1}\Delta^{m-1}_{2n+1}-n^{m-1}\Delta^{m-1}_{n+1}.
\end{align*}
Fix $f\in L^p(\Omega;\B)$. To finish the proof, it suffices to prove
\begin{equation}\label{estimate of An}
\|(A_n(f))_{n\geq0}\|_{L^p(v_q(\B))}\lesssim\|\|f\|\|_p
\end{equation}
and
\begin{align}\label{estimate of Bn}
\|(B_n(f))_{n\geq0}\|_{L^p(v_q(\B))}\lesssim\|\|f\|\|_p.
\end{align}
Indeed, the two estimates, together with Lemma \ref{lem: elementary estimate 2}, imply
$$\|((n^m\Delta^m_{2n+1})(f))_{n\geq0}\|_{L^p(v_q(\B))}\lesssim\|\|f\|\|_p$$
since 
\begin{align*}
\|(\frac{n+1}{n}B_n(f))_{n\geq1}\|_{L^p(v_q(\B))}\lesssim\|\|f\|\|_p,
\end{align*}
\begin{align*}
\|(n^{m-1}\Delta^{m-1}_{n+1}(f))_{n\geq1}\|_{L^p(v_q(\B))}\lesssim\|\|f\|\|_p
\end{align*}
and
\begin{align*}
\|(n^{m-1}\Delta^{m-1}_{2n+1}(f))_{n\geq1}\|_{L^p(v_q(\B))}\lesssim\|\|f\|\|_p,
\end{align*}
whence further yields the desired result by using again (\ref{estimate of Bn}) and the identity $n\Delta^m_n=n^m\Delta^m_{2n+1}-B_n$.

As in \cite{LeXu2}, we shall prove
the estimates of (\ref{estimate of An}) and (\ref{estimate of Bn}) by deducing the pointwise estimates
$$\|(A_n(f))_{n\geq0}\|_{v_q(\B)}, \;\|(B_n(f))_{n\geq0}\|_{v_q(\B)}\lesssim \Phi_{m,q_0}(f)$$
where $\Phi_{m,q_0}(f)$ denotes the vector-valued Littlewood-Paley function
$$\Phi_{m,q_0}(f)=\left(\sum^{\infty}_{j=1}\frac{1}{j+1}\|(j+1)^{m+1}\Delta^{m+1}_j(f)\|^{q_0}\right)^{\frac{1}{q_0}}$$
which belongs to $L^p(\Omega)$ by the assumption (\ref{littlewood-paley}).

We only deal with  the sequence $(A_n)_{n\geq0}$, since similar argument works also for the sequence $(B_n)_{n\geq1}$ (see e.g. the paragraph after (4.14) in \cite{LeXu2}). Given a fixed increasing sequence of integers $(n_k)_{k\geq0}$ with $n_0=1$. For any $k\geq1$, we define
\begin{align*}
a_k=\left\{\begin{array}{cc}n^{m-1}_k\sum^{2n_k}_{j=2n_{k-1}+1}(j+1)\Delta^{m+1}_j& \;\mathrm{if}\; 2n_{k-1}\geq n_k\\
       n^{m-1}_k\sum^{2n_k}_{j=n_k}(j+1)\Delta^{m+1}_j& \;\mathrm{if}\; 2n_{k-1}< n_k\end{array}\right.
\end{align*}
$$b_k=\left\{\begin{array}{cc}-n^{m-1}_{k-1}\sum^{n_k-1}_{j=n_{k-1}}(j+1)\Delta^{m+1}_j& \;\mathrm{if}\; 2n_{k-1}\geq n_k\\
       -n^{m-1}_k\sum^{2n_{k-1}}_{j=n_{k-1}}(j+1)\Delta^{m+1}_j& \;\mathrm{if}\; 2n_{k-1}< n_k\end{array}\right.$$
       and
$$c_k=\left\{\begin{array}{cc}(n^{m-1}_k-n^{m-1}_{k-1})\sum^{2n_{k-1}}_{j=n_k}(j+1)\Delta^{m+1}_j& \;\mathrm{if}\; 2n_{k-1}\geq n_k\\
       0& \;\mathrm{if}\; 2n_{k-1}< n_k\end{array}\right.$$
       which yields a decomposition
\begin{align}\label{identity 2}
A_{n_k}-A_{n_{k-1}}=a_k+b_k+c_k.
\end{align}
Let $\theta=1/{q_0}-m$ such that $(1-\theta)q_0=q_0(m+1)-1$ and $\theta q_0/(q_0-1)=(1-q_0m)/(q_0-1)$. Using H\"older inequality, if $2n_{k-1}\geq n_k$, we have
\begin{align*}
\|a_k(f)\|&\leq n_k^{m-1}\sum^{2n_k}_{j=2n_{k-1}+1}(j+1)\|\Delta^{m+1}_j(f)\|\\
&\leq  n_k^{m-1}\left(\sum^{2n_k}_{j=2n_{k-1}+1}(j+1)^{(1-\theta)q_0}\|\Delta^{m+1}_j(f)\|^{q_0}\right)^{\frac{1}{q_0}}\\
&\;\;\;\;\;\;\;\;\;\;\;\;\;\;\;\;\;\;\;\;\;\;\;\;\cdot
\left(\sum^{2n_k}_{j=2n_{k-1}+1}(j+1)^{\frac{\theta q_0}{q_0-1}}\right)^{\frac{q_0-1}{q_0}}\\
&=n_k^{m-1}\left(\sum^{2n_k}_{j=2n_{k-1}+1}(j+1)^{q_0(m+1)-1}\|\Delta^{m+1}_j(f)\|^{q_0}\right)^{\frac{1}{q_0}}\\
&\;\;\;\;\;\;\;\;\;\;\;\;\;\;\;\;\;\;\;\;\;\;\;\;\cdot
\left(\sum^{2n_k}_{j=2n_{k-1}+1}(j+1)^{\frac{1-q_0m}{q_0-1}}\right)^{\frac{q_0-1}{q_0}}.
\end{align*}
Similarly if $2n_{k-1}<n_k$, we have
\begin{align*}
\|a_k(f)\|&\leq  n_k^{m-1}\left(\sum^{2n_k}_{j=n_k}(j+1)^{(1-\theta)q_0}\|\Delta^{m+1}_j(f)\|^{q_0}\right)^{\frac{1}{q_0}}\\
&\;\;\;\;\;\;\;\;\;\;\;\;\;\;\;\;\;\;\;\;\;\;\;\;\cdot
\left(\sum^{2n_k}_{j=n_k}(j+1)^{\frac{\theta q_0}{q_0-1}}\right)^{\frac{q_0-1}{q_0}}\\
&=n_k^{m-1}\left(\sum^{2n_k}_{j=2n_{k-1}+1}(j+1)^{q_0(m+1)-1}\|\Delta^{m+1}_j(f)\|^{q_0}\right)^{\frac{1}{q_0}}\\
&\;\;\;\;\;\;\;\;\;\;\;\;\;\;\;\;\;\;\;\;\;\;\;\;\cdot
\left(\sum^{2n_k}_{j=n_k}(j+1)^{\frac{1-q_0m}{q_0-1}}\right)^{\frac{q_0-1}{q_0}}.
\end{align*}
Thus by Lemma \ref{lem: elementary estimate 1}, we have in both case
$$\|a_{k}(f)\|^{q_0}\leq K^{q_0}_{m,q_0}\sum^{2n_k}_{j=2n_{k-1}+1}(j+1)^{q_0(m+1)-1}\|\Delta^{m+1}_j(f)\|^{q_0}.$$
Summing up, we deduce that
\begin{align}\label{estimate of a}
\sum^{\infty}_{k=1}\|a_{k}(f)\|^{q_0}\leq K^{q_0}_{m,q_0}\Phi_{m,q_0}(f)^{q_0}.
\end{align}
By a similar deduction, we also have
\begin{align}\label{estimate of b}
\sum^{\infty}_{k=1}\|b_{k}(f)\|^{q_0}\leq K^{q_0}_{m,q_0}\Phi_{m,q_0}(f)^{q_0}.
\end{align}

Now, let us turn to the last but the most involved term $c_k(f)$. It suffices to deal with the case $2n_{k-1}\geq n_k$. Using H\"older inequality and Lemma \ref{lem: elementary estimate 1}, we have
\begin{align*}
\|c_k(f)\|^{q_0}&\leq |n_k^{m-1}-n_{k-1}^{m-1}|^{q_0}\sum^{2n_{k-1}}_{j=n_k}(j+1)^{q_0(m+1)-1}\|\Delta^{m+1}_j(f)\|^{q_0}\\
&\;\;\;\;\;\;\;\;\;\;\;\;\;\;\;\;\;\;\;\;\;\;\;\;\cdot
\left(\sum^{2n_{k-1}}_{j=n_k}(j+1)^{\frac{1-q_0m}{q_0-1}}\right)^{{q_0-1}}\\
&\leq K^{q_0}_{m,q_0}\left(\frac{n_k^{m-1}-n_{k-1}^{m-1}}{n_k^{m-1}}\right)^{q_0}\sum^{2n_{k-1}}_{j=n_k}(j+1)^{q_0(m+1)-1}\|\Delta^{m+1}_j(f)\|^{q_0}.
\end{align*}
For any integer $j\geq1,$ define
$$J_j=\{k\geq1:n_k\leq j\leq2n_{k-1}\},$$
and set
$$\Lambda_j=\sum_{k\in J_j}\left(\frac{n_k^{m-1}-n_{k-1}^{m-1}}{n_k^{m-1}}\right)^{q_0}.$$
Then by Fubini theorem,
\begin{align*}
\sum^{\infty}_{k=1}\|c_k(f)\|^{q_0}&\leq K^{q_0}_{m,q_0}\sum^{\infty}_{j=1}\Lambda_j (j+1)^{q_0(m+1)-1}\|\Delta^{m+1}_j(f)\|^{q_0}.
\end{align*}
Let us now estimate the $\Lambda_j$'s. Observe that if $J_j$ is a non empty set, then it is a finite interval of integers. Thus it reads as
$$J_j=\{k_j-N+1,k_j-N+2,\dotsm, k_j-1,k_j\}$$
where $k_j$ is the biggest element of $J_j$ and $N$ is the integer such that $k_j-N+1$ is the least integer  such that $j\leq 2n_{k_j-N}$.

Suppose that $m\geq2$, then the sequence $(n^{m-1}_k)_{k}$ is increasing. Thus
\begin{align*}
\sum_{k\in J_j}n^{m-1}_k-n^{m-1}_{k-1}=\sum^{N-1}_{r=0}n^{m-1}_{k_j-r}-n^{m-1}_{k_j-r-1}=n^{m-1}_{k_j}-n^{m-1}_{k_j-N}\leq n^{m-1}_{k_j},
\end{align*}
which is further smaller then $j^{m-1}$, since $k_j\in J_j$ hence $n_{k_j}\leq j.$
On the other hand, we have $j\leq 2n_k$ for any $k\in J_j$, hence
$$\sum_{k\in J_j}\frac{n_k^{m-1}-n_{k-1}^{m-1}}{n_k^{m-1}}\leq \left(\frac{2}{j}\right)^{{m-1}}
\sum_{k\in J_j}n_k^{m-1}-n_{k-1}^{m-1}\leq 2^{m-1}.$$
We immediately deduce that $\Lambda_j\leq 2^{(m-1)q_0}$. In the case $m=0$, we have similarly
$$\sum_{k\in J_j}\frac{n_k^{-1}-n_{k-1}^{-1}}{n_k^{-1}}\leq j\sum_{k\in J_j}n_k^{-1}-n_{k-1}^{-1}\leq jn^{-1}_{k_j-N}\leq 2.$$
So in this case $\Lambda_j\leq 2^{q_0}$. Finally $\Lambda_j=0$ when $m=1$.

This shows that in all cases, we have
$$\sum^{\infty}_{k=1}\|c_{k}(f)\|^{q_0}\leq K'^{q_0}_{m,q_0}\Phi_{m,q_0}(f)^{q_0}.$$

By the identity (\ref{identity 2}), this estimate together with (\ref{estimate of a}) and (\ref{estimate of b}) yields
\begin{align*}
\sum^{\infty}_{k=1}\|A_{n_k}(f)-A_{n_{k-1}}(f)\|^{q_0}\leq (6K^{q_0}_{m,q_0}=K'^{q_0}_{m,q_0})\Phi_{m,q_0}(f)^{q_0},
\end{align*}
which yields the desired estimates since the upper bound does not depend on the sequence $(n_k)_{k\geq0}$ and $v_q(\B)\subset v_{q_0}(\B)$ when $q>q_0\geq2$.
\end{proof}

\section{Applications}
In this section, we are concerned with the properties of the convergence in connection with variational inequalities. Restriceted to Banach spaces of martingale cotype $q_0$ with $2\leq q_0<\infty$, we will see that our results improve the ones by Cowling/Leinert and Taggart.

Let $T=(T_t)_{t\geq0}$ be a symmetric diffusion semigroup.  By definiton,  $T_t$ is contractive on all $L^p$ with $1\leq p\leq\infty$ for any $t>0$, hence $T_t$ is contractively regular on all $L^p$ with $1\leq p\leq\infty$ for any $t>0$, which implies by (\ref{extension of T}) that $T_t$ extends contractively on all $L^p(\Omega;\B)$ with $1\leq p\leq\infty$ for any Banach spaces and any $t>0$. On the other hand, by definition, for any $f\in L^p(\Omega)$, $T_tf\rightarrow f$ in the $L^p$-norm as $t\rightarrow0^+$. Therefore by the density argument, for any $f\in L^p(\Omega;\B)$, $T_tf\rightarrow f$ in the $L^p(B)$-norm as $t\rightarrow0^+$.

Let $A$ denote the infinitesimal generator of $T$. By mean ergodic theorem, we have direct sum decomposition
$$L^p(\Omega)=N(A)\oplus \overline{R(A)}.$$
If we let $P_A: L^p(\Omega)\rightarrow L^p(\Omega)$ denotes the corresponding projection onto $N(A)$, then
$$T_tf\rightarrow P_A(f),\;\;\mathrm{as}\;t\rightarrow\infty$$
in $L^p$-norm for any $f\in L^p(\Omega)$. By by the density argument, this limit holds also true for any $f\in L^p(\Omega;\B)$.

Now applying Theorem \ref{thm: analytic semigroup}, we deduce the following individual ergodic theorems.
\begin{corollary}
Let $T=(T_t)_{t\geq0}$ be a symmetric diffusion semigroup. Let $2\leq q_0<\infty$ and $\B$ be a Banach space of martingale cotype $q_0$ which is an interpolation space between a Hilbert space and another Banach space $\B_0$ of martingale cotype $q_1$ with $q_0<q_1<\infty$. Let $f\in L^p(\Omega;\B)$ with $1<p<\infty$, then for almost every $\omega\in\Omega$,
$$[T_tf](\omega)\rightarrow [P_A(f)](\omega) \;\mathrm{in}\; B\mathrm{-norm},\;\;\mathrm{as}\;t\rightarrow\infty$$
and
$$[T_tf](\omega)\rightarrow f(\omega) \;\mathrm{in}\; B\mathrm{-norm},\;\;\mathrm{as}\;t\rightarrow\infty.$$
\end{corollary}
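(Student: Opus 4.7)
The plan is to transfer the $L^p$ bound on the $q$-variation furnished by Theorem \ref{thm: analytic semigroup} into a pointwise statement, and then to pin down the two endpoint limits using the $L^p(\mathcal{B})$-convergences recorded in the paragraphs immediately preceding the corollary. The pivotal observation is that a $V_q(\mathcal{B})$-valued trajectory on $(0,\infty)$ automatically admits one-sided limits at $0^+$ and at $\infty$; once this is in hand, any $L^p$-convergent subsequence identifies those limits almost everywhere.

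First, I would apply Theorem \ref{thm: analytic semigroup} with $m=0$ to the given $f\in L^p(\Omega;\mathcal{B})$, producing a null set $N\subset\Omega$ off of which $(T_tf(\omega))_{t>0}\in V_q(\mathcal{B})$. Fix $\omega\notin N$ for the rest of the argument.

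Second, I would establish the following elementary fact: every $(a_t)_{t>0}\in V_q(\mathcal{B})$ admits limits in $\mathcal{B}$ as $t\to 0^+$ and as $t\to\infty$. Assume, towards a contradiction, that the limit at $0^+$ fails to exist; then there exist $\varepsilon>0$ and sequences $s_n,t_n\downarrow 0$ with $\|a_{s_n}-a_{t_n}\|>\varepsilon$. After thinning, we may assume the interleaving $t_1>s_1>t_2>s_2>\cdots$, so the finite increasing sequence $s_N<t_N<\cdots<s_1<t_1$ contributes at least $N^{1/q}\varepsilon$ to the $q$-variation, contradicting $\|(a_t)\|_{V_q(\mathcal{B})}<\infty$ as $N\to\infty$. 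The case $t\to\infty$ is identical.

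Third, I would identify the two pointwise limits. Recall from the discussion preceding the corollary that $T_tf\to f$ in $L^p(\Omega;\mathcal{B})$ as $t\to 0^+$ and $T_tf\to P_A(f)$ in $L^p(\Omega;\mathcal{B})$ as $t\to\infty$. Choosing a sequence $t_n\downarrow 0$ and passing to a subsequence (still denoted $t_n$), we obtain $T_{t_n}f(\omega)\to f(\omega)$ in $\mathcal{B}$ for a.e.\ $\omega$; intersecting with the full-measure set from the first step, the existence of $\lim_{t\to 0^+}T_tf(\omega)$ from the second step forces this limit to equal $f(\omega)$. The analogous subsequence extraction with $t_n\uparrow\infty$, combined with $L^p$-convergence to $P_A(f)$, gives $\lim_{t\to\infty}T_tf(\omega)=P_A(f)(\omega)$ a.e.

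The only delicate point is the elementary $V_q$-limit fact; everything else is a standard Lebesgue-to-pointwise subsequence extraction. It is worth noting that the reflexivity needed to justify the $L^p(\Omega;\mathcal{B})$-convergence of $T_tf$ to $P_A(f)$ as $t\to\infty$ comes for free, since a Banach space of finite martingale cotype is super-reflexive.
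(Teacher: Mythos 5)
Your proposal is correct and is exactly the argument the paper intends (the paper only sketches it): Theorem \ref{thm: analytic semigroup} with $m=0$ gives a.e.\ finiteness of the $q$-variation of $(T_tf(\omega))_{t>0}$, finite $q$-variation forces the existence of the $\B$-limits at $0^+$ and at $\infty$, and the $L^p(\Omega;\B)$-norm convergences recalled just before the corollary identify these limits through a.e.-convergent subsequences. You also correctly read the second limit as $t\rightarrow 0^+$ (a typo in the statement), and your side remark on super-reflexivity, while true by Pisier's renorming theorem, is not really needed since the paper obtains $T_tf\rightarrow P_A(f)$ in $L^p(\Omega;\B)$ by a density argument from the scalar mean ergodic theorem.
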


As said in the introduction of this paper, the pointwise convergence has been obtained by Cowling and Leinert for all Banach spaces $\B$. However, Theorem \ref{thm: analytic semigroup} provides us quantitvative information on the rate of the convergence, which depends on the geometric property of the Banach space under consideration.  This requires the notion of $\lambda$-jump functions. For any $\lambda>0$ and any family $a=(a_t)_{t\geq0}$ in Banach space $\B$. One defines $N(a,\lambda)$ to be the supremum of all integers $N\geq0$ for which there is an increasing sequence
$$0<s_1<t_1\leq s_2<t_2\leq\dotsm\leq s_N<t_N$$
so that $\|a_{t_k}-a_{s_k}\|>\lambda$ for each $k=1,\dotsm,N$. It is clear that for any $1\leq q<\infty$,
$$\lambda^qN(a,\lambda)\leq \|a\|^q_{V_q(B)}.$$

By Theorem \ref{thm: analytic semigroup}, we immediately obtain the following jump estiamtes.
\begin{corollary}
Let $T=(T_t)_{t\geq0}$ be a symmetric diffusion semigroup. Let $2\leq q_0<\infty$ and $\B$ be a Banach space of martingale cotype $q_0$ which is an interpolation space between a Hilbert space and another Banach space $B_0$ of martingale cotype $q_1$ with $2\leq q_1<\infty$.  For any $f\in L^p(\Omega;B)$ with $1<p<\infty$ and $q_0<q<\infty$, we have
$$\left\|\omega\rightarrow N\big(([T_tf](\omega))_{t\geq0},\lambda\big)^{\frac{1}{q}}\right\|_p\lesssim\frac{\|f\|_{L^p(B)}}{\lambda},$$
and for any $K>0$, we also have
$$\mu\big\{\omega\in\Omega|N\big(([T_tf](\omega))_{t\geq0},\lambda\big)>K\big\}\lesssim\frac{\|f\|_{L^p(B)}}{\lambda^pK^{\frac{p}{q}}} .$$
\end{corollary}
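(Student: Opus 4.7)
The plan is to derive both jump estimates as direct corollaries of Theorem \ref{thm: analytic semigroup} together with the trivial pointwise comparison
\[
\lambda^q N(a,\lambda) \leq \|a\|^q_{V_q(\B)}
\]
that was recorded just before the statement. Set $a(\omega) = ([T_t f](\omega))_{t\geq 0}$. By the pointwise comparison applied for each $\omega\in\Omega$, we have
\[
N(a(\omega),\lambda)^{1/q} \leq \lambda^{-1}\|a(\omega)\|_{V_q(\B)}
\]
for almost every $\omega$. Taking the $L^p(\Omega)$-norm on both sides and invoking Theorem \ref{thm: analytic semigroup} (in the $m=0$ case) yields the first estimate
\[
\left\|\omega \mapsto N(a(\omega),\lambda)^{1/q}\right\|_p \leq \lambda^{-1}\bigl\|\|a\|_{V_q(\B)}\bigr\|_p \lesssim \lambda^{-1}\|f\|_{L^p(\B)}.
\]

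For the distributional jump estimate, I would apply Chebyshev's inequality to the nonnegative function $\omega \mapsto N(a(\omega),\lambda)^{1/q}$. Since
\[
\{\omega : N(a(\omega),\lambda) > K\} = \{\omega : N(a(\omega),\lambda)^{1/q} > K^{1/q}\},
\]
Chebyshev gives
\[
\mu\{\omega : N(a(\omega),\lambda) > K\} \leq K^{-p/q}\int_\Omega N(a(\omega),\lambda)^{p/q}\,d\mu = K^{-p/q}\bigl\|N(a,\lambda)^{1/q}\bigr\|_p^p,
\]
and plugging in the first estimate produces the claimed bound (up to the exponent $p$ on $\|f\|_{L^p(\B)}$, which appears to be a typo in the statement as written).

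There is no serious obstacle: the corollary is essentially a bookkeeping consequence of the $V_q$-variational inequality proved in Theorem \ref{thm: analytic semigroup}, combined with the definition of $N(a,\lambda)$ and Chebyshev. The only thing to double-check is the measurability of $\omega \mapsto N(a(\omega),\lambda)$ and of $\omega \mapsto \|a(\omega)\|_{V_q(\B)}$, which follows from the pointwise continuity in $t$ of $[T_t f](\omega)$ established in the proof of Theorem \ref{thm: analytic semigroup}, since in the definition of $N$ and $V_q$ one can restrict the supremum to rational times without loss.
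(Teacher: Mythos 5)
Your proposal is correct and is exactly the argument the paper intends: the pointwise bound $\lambda^q N(a,\lambda)\leq\|a\|^q_{V_q(\B)}$ combined with the $m=0$ case of Theorem \ref{thm: analytic semigroup} gives the first estimate, and Chebyshev gives the second (the paper simply states the corollary follows immediately). Your remark that the second bound should carry $\|f\|^p_{L^p(\B)}$ is also right; that is a typo in the stated corollary.
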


The jump estimate is sharp in the sense that the index $q$ can not go below $q_0$. This can be easily seen from the particular case when $\B$ is a Hilbert space. In this case, $q_0=2$ and suppose we can find $q_2<q_0$ such that the jump estimate holds. Then by Lemma 2.1 in \cite{JSW08} (see also \cite{Bou89}), we have corresponding $q$-variational inequality for all $q_2<q$ , which contradicts with the fact the $2$-variational inequality fails (see e.g. \cite{JoWa04}, \cite{Qia98}). For Banach space of martingale cotype $q_0$, similar phenomenon happens. That means we can not expect the jump estimate were true for $q<q_0$. On the other hand, we may expect that the jump estimate would be true when $q=q_0$. All these facts will be proved and appear elsewhere.

\vskip3pt

\noindent \textbf{Acknowledgement.} Guixiang Hong is supported by MINECO: ICMAT Severo Ochoa project SEV-2011-0087 and ERC Grant StG-256997-CZOSQP (EU). Tao Ma is partially supported by NSFC No. 11271292.

\vskip30pt

\end{document}